\newtheorem{theorem}{Theorem}
\newtheorem{lemma}[theorem]{Lemma}
\newtheorem{corollary}[theorem]{Corollary}
\newtheorem{conjecture}[theorem]{Conjecture}
\newtheorem{proposition}[theorem]{Proposition}
\newtheorem{preremark}[theorem]{Remark}
\newenvironment{remark}{\begin{preremark}\rm}{\end{preremark}}
\newtheorem{prenotation}[theorem]{Notation}
\newenvironment{notation}{\begin{prenotation}\rm}{\end{prenotation}}
\numberwithin{equation}{section}
\numberwithin{theorem}{section}
\font\gotxi=eufm10 at 11pt
\font\posebni=msam10
\newcommand{\C}{\mathbb{C}}
\newcommand{\cR}{\mathcal R}
\newcommand{\cS}{\mathcal S}
\newcommand{\cU}{\mathcal U}
\newcommand{\cV}{\mathcal V}
\newcommand{\cW}{\mathcal W}
\newcommand{\pd}{\partial}
\newcommand{\e}{\varepsilon}
\newcommand{\leqsim}{\,\text{\posebni \char46}\,}
\newcommand{\geqsim}{\,\text{\posebni \char38}\,}
\newcommand{\nor}[1]{\| #1 \|}
\newcommand{\mn}[2]{\left\{ #1\, ;\, #2 \right\}}
\renewcommand{\leq}{\leqslant}
\renewcommand{\geq}{\geqslant}
\renewcommand{\searrow}{\downarrow}
\renewcommand\mod[1]{\left\vert{#1}\right\vert}
\newcommand\wrt{\,d{}}
\newcommand\norm[2]{{\big\Vert{#1}\big\Vert_{#2}}}
\title[Complex Riesz transform]{Sharp $L^p$ estimates of powers of the complex Riesz transform}
\author[A. Carbonaro]{Andrea Carbonaro}
\address{Andrea Carbonaro, Universit\`{a} degli Studi di Genova, Dipartimento di Matematica, Via Dodecaneso 35, 16146 Genova, Italy}
\email{carbonaro@dima.unige.it}
\author[O. Dragi\v{c}evi\'{c}]{Oliver Dragi\v{c}evi\'{c}}
\address{Oliver Dragi\v{c}evi\'{c}, Department of Mathematics, Faculty of Mathematics and Physics, University of Ljubljana, and Institute for Mathematics, Physics and Mechanics, Jadranska 21, SI-1000 Ljubljana, Slovenia}
\email{oliver.dragicevic@fmf.uni-lj.si}
\author[V. Kova\v{c}]{Vjekoslav Kova\v{c}}
\address{Vjekoslav Kova\v{c}, Department of Mathematics, Faculty of Science, University of Zagreb, Bijeni\v{c}ka cesta 30, 10000 Zagreb, Croatia}
\email{vjekovac@math.hr}
\begin{document}

\begin{abstract}
Let $R_{1,2}$ be scalar Riesz transforms on $\mathbb{R}^2$. We prove that the $L^p$ norms of $k$-th powers of the operator $R_2+iR_1$ behave exactly as $|k|^{1-2/p^\ast}(p^\ast-1)$, uniformly in $k\in\mathbb{Z}\backslash\{0\}$ and $1<p<\infty$, where $p^\ast$ is the bigger number between $p$ and its conjugate exponent.
This gives a complete asymptotic answer to a question suggested by Iwaniec and Martin in 1996. The main novelty are the lower estimates, of which we give three different proofs. We also conjecture the exact value of $\|(R_2+iR_1)^k\|_p$. Furthermore, we establish the sharp behaviour of weak $(1,1)$ constants of $(R_2+iR_1)^k$ and an $L^\infty$ to $BMO$ estimate that is sharp up to a logarithmic factor.
\end{abstract}

\maketitle


\section{Introduction and the statement of the main results}

Let $R_1$ and $R_2$ be the classical {\it Riesz transforms} on $\mathbb{R}^2\equiv\mathbb{C}$, see Grafakos \cite[Section 5.1]{G14} for definitions and main properties. We will be concerned with $\nor{R^k}_p$, by which we denote the $L^p$ norms ($1<p<\infty$) of integer powers of the operator
\begin{equation*}
R=R_2+iR_1.
\end{equation*}
Possibly under different choices of
normalization,
$R$ is sometimes called the {\it complex Hilbert transform} \cite{IM96} or {\it complex Riesz transform} \cite[Section 4.2]{AIM09}.
The operator $R$ is the Fourier multiplier associated with the symbol
\begin{equation}
\label{eq: Four symbol}
\zeta\mapsto\frac{\overline{\zeta}}{|\zeta|}; \quad \zeta\in\mathbb{C}\setminus\{0\}.
\end{equation}
This is also a singular integral operator of convolution type.
By \cite[Theorem 4.2.1]{AIM09}, the (principal-value) convolution kernel of the $k$-th power $R^k$ of $R$ is for $k\in\mathbb{Z}$ given by
\begin{equation}
\label{eq: Opel Kadett}
\Omega_k(z):=\frac{i^{|k|}|k|}{2\pi }\ \frac{(z/|z|)^{-k}}{|z|^2}\,.
\end{equation}
That is, for admissible functions $f:\mathbb{C}\rightarrow\mathbb{C}$ we have
\begin{equation}
\label{Uesugi Kenshin}
(R^{k}f)(z)=
\frac{i^{|k|}|k|}{2\pi }
\lim_{\delta\searrow0}\int_{\{|w|>\delta\}}
\frac{(w/|w|)^{-k}}{|w|^2}f(z-w)\,dA(w).
\end{equation}
Here $dA$ stands for the usual planar
(Lebesgue) measure on $\mathbb{C}$.

Of particular interest is
the square of $R$; the operator $R^2$ is
known as the {\it Ahlfors--Beurling} (or {\it Beurling--Ahlfors}, or just {\it Beurling}, sometimes even {\it Hilbert} \cite{LV65}) {\it transform}.

\medskip
The importance of $\nor{R^k}_p$
 for arbitrary $k\in\mathbb{Z}$
was first understood in the influential 1996 paper \cite{IM96} by Iwaniec and Martin.
Throughout their article they extensively work with $R^k$ and
most of their estimates are expressed
in terms of $\nor{R^k}_p$. However,
on p.~27--28 they write:

\medskip
{\it
``We should point out however that the $p$-norms [\dots] of the $m^{th}$ iterate of the complex Hilbert transform are as yet unknown.''}

\medskip
For $a_1,a_2>0$ we write $a_1\geqsim a_2$ if there is a constant $C>0$ such that $a_1\geqslant C a_2$. Similarly we define $a_1\leqsim a_2$. If both $a_1\geqsim a_2$ and $a_1\leqsim a_2$, then we write $a_1\sim a_2$.
Furthermore, given $p>1$, the letter $q$ will from now stand for its conjugate exponent $q=p/(p-1)$ unless specified otherwise. Moreover, we will use the notation
$$
p^*=\max\{p,q\}.
$$

In \cite{DPV06} and \cite{D11-2} it was proven that
\begin{equation}
\label{13}
\nor{R^k}_p\sim k^{1-2/p^*}(p^*-1)
\hskip 30pt
\text{for }even\ k\in\mathbb{N}
\end{equation}
in the sense of bilateral estimates with constants independent of $k\in\mathbb{N}$ and $p>1$.
Unaware
of \cite{DPV06},
Astala, Iwaniec and Martin proved a bit later
that $\nor{R^k}_p$ grows at most quadratically with respect to $k$, see their monograph \cite[Corollary 4.5.1]{AIM09}.
They also pointed out that the {\it exact} growth rate was unknown \cite[p. 124]{AIM09}:

\medskip
{\it ``In the sequel it will be important to establish that the $L^p(\mathbb{C})$ norms of the
$k^{\text{th}}$ iterate of the complex Riesz transform do not grow as a power but rather
that the growth is a polynomial in $k$. It is not at all clear what the precise
asymptotics here are.''}

\medskip
In this paper we prove that \eqref{13} also holds for {\it odd} powers $k$, which thus completely answers
the above questions in the asymptotic sense:
\begin{theorem}
\label{t: main}
We have
\begin{equation*}
\nor{R^k}_p\sim k^{1-2/p^*}(p^*-1)
\qquad
\text{for all } k\in\mathbb{N}
\text{ and }p\in(1,\infty).
\end{equation*}
The equivalence constants are absolute, that is, independent of $p$ or $k$.
\end{theorem}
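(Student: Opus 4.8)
The plan is to first reduce to $p\ge2$. The symbol \eqref{eq: Four symbol} passes to its reciprocal under complex conjugation, which yields $R^*=R^{-1}$ and $\overline{R^kf}=\pm R^{-k}\bar f$; hence, combining duality ($\nor{R^k}_p=\nor{(R^k)^*}_q=\nor{R^{-k}}_q$) with complex conjugation ($\nor{R^{-k}}_q=\nor{R^k}_q$, since $f\mapsto\bar f$ is an $L^p$-isometry), one gets $\nor{R^k}_p=\nor{R^k}_q=\nor{R^k}_{p^*}$. It therefore suffices to prove
\[
c\,k^{1-2/p}(p-1)\ \leq\ \nor{R^k}_p\ \leq\ C\,k^{1-2/p}(p-1),\qquad p\ge2,\ k\in\mathbb{N},
\]
with absolute constants $c,C>0$. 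For even $k$ this is precisely \eqref{13}, so the genuinely new content is the range of odd $k$, together with a lower bound robust enough to carry the factor $p-1$ for \emph{every} fixed $k$ (already $k=1$ requires $\nor{R}_p\geqsim p-1$, which is not a consequence of the even case). We also recall that $\nor{R^k}_p$ is known to grow at most quadratically in $k$ by \cite[Corollary 4.5.1]{AIM09}, so what remains is to pin down the exact power law uniformly in $k$ and $p$.

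\textsc{Upper bound for odd $k$.} The kernel $\Omega_k$ in \eqref{eq: Opel Kadett} is, apart from the scalar $i^{|k|}|k|/(2\pi)$, a standard Calder\'on--Zygmund kernel carrying the single angular frequency $-k$, and its structure does not see the parity of $k$. The plan is to rerun the Bellman-function/heat-flow proof of \eqref{13} with $\Omega_{2m+1}$ in place of the even-power kernel: represent $\sk{R^kf}{g}$ through the heat (or Poisson) extension to the upper half-space over $\R^2\equiv\C$, pair it against the Burkholder-type Bellman function of the couple $(|f|^p,|g|^q)$, and estimate the resulting bilinear form. The Bellman function provides the factor $p-1$; the factor $|k|$ in \eqref{eq: Opel Kadett}, tempered by angular cancellation and combined — exactly as in the proof of \eqref{13} — with the isometry property $\nor{R^k}_2=1$, provides $|k|^{1-2/p}$ in place of $|k|$. (Interpolation in the exponent $s$ of the symbol $(\bar\zeta/|\zeta|)^s$ between $s=2m$ and $s=2m+2$ is a tempting shortcut, but the branch cut of that symbol makes its growth on vertical lines too large for a direct use of Stein interpolation, so the heat-flow route is cleaner.) This is the technically lighter half.

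\textsc{Lower bound.} This is the crux, and it requires producing — and then \emph{amalgamating without loss} — two independent gains: growth in $k$ and growth in $p$. The $k$-gain comes from resonance with the angular frequency $-k$ of $\Omega_k$. Since the symbol is rotation-covariant, $R^k$ sends the angular mode $e^{in\arg z}h(|z|)$ to $e^{i(n+k)\arg z}\,(L_{k,n}h)(|z|)$, where $L_{k,n}$ is a one-dimensional Hankel-type operator: an isometry on $L^2(r\,dr)$, but with $L^p(r\,dr)$-operator norm of order $|k|^{1-2/p}$ for an appropriate frequency $n$ comparable to $-k$; after the substitution $r=e^t$ it becomes a one-dimensional convolution operator on the line. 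Equivalently one tests $R^k$ on power-type functions — monomials $z^a\bar z^b$ on discs or annuli, on which $R^k$ is computable in closed form via $\partial,\bar\partial$-calculus — with $a-b$ chosen to resonate with $\Omega_k$ and $a+b$ near the critical $L^p$-integrability threshold, and reads off a norm ratio $\geqsim|k|^{1-2/p}$. The $p$-gain comes from the classical mechanism behind $\nor{R^2}_p\geqsim p-1$: the image of the indicator of a disc has a heavy $\sim z^{-2}$ tail, so a lacunary superposition of such bumps forces an $L^p$ square-function lower bound with constant $\sim p-1$, a phenomenon insensitive to $k$. The delicate step — and the reason for giving three proofs — is to combine both gains in a single extremiser: (i) a direct construction superposing \emph{resonating} annuli at lacunary scales, with signs tuned so that $R^kf$ realises the extremal $L^p$ square-function growth, producing the honest product $k^{1-2/p}(p-1)$; (ii) a probabilistic/laminate construction, building a martingale whose increments rotate by the angle attached to $k$ while coinciding with the Burkholder extremisers, so that optimality of the Burkholder function reads off the bound; (iii) a transference argument comparing $L_{k,n}$ (for the optimal $n$) with a one-dimensional singular integral of known sharp $L^p$ constant and transplanting its near-extremisers back to the plane.

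The hard part is precisely this amalgamation. Each ingredient is individually within reach — the $|k|^{1-2/p}$ from the radial operator $L_{k,n}$, the $p-1$ from lacunary tail overlap — but exhibiting one extremiser, deterministic or random, that is simultaneously tuned to the angular frequency \emph{and} to the near-critical radial decay, with the two optimisations provably not interfering, is subtle; the three independent arguments then serve as mutual confirmation. The upper bound for odd $k$, by contrast, should be a controlled adaptation of the machinery already developed for \eqref{13}.
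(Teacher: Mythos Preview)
Your proposal is a strategic outline rather than a proof, and it rests on a misconception about the lower bound that makes the ``amalgamation'' problem you highlight unnecessary. The paper's key insight is that a \emph{single} power-type extremiser --- essentially $|z|^{-2/p}$ truncated to the unit disc, or equivalently $L^{-1/p-\varepsilon}\delta_0$ --- already delivers the full product $k^{1-2/p}(p-1)$; there is no separate ``$k$-gain'' from resonance and ``$p$-gain'' from lacunary tails that need to be combined. Your monomial test (with $a+b$ near the $L^p$-critical threshold) is exactly the right object, but you underestimate it: the paper computes the norm ratio on this function and gets the explicit constant $\gamma_k(p)=\Gamma(1/p)\Gamma(1/q+k/2)/\big(\Gamma(1/q)\Gamma(1/p+k/2)\big)$, which is $\sim k^{1-2/p}(p-1)$ by elementary gamma-function asymptotics. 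The three proofs in the paper are not three different extremiser constructions (lacunary/laminate/transference) but three ways of \emph{computing} $R^k$ on this one function: via the integral kernel, via Gaussian superposition on the Fourier side, and via Riesz potentials and the identity $R^k L^{-\alpha}\delta_0 = c_{k,\alpha}(\bar z/|z|)^k |z|^{2(\alpha-1)}$.

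For the upper bound, the paper does not use Bellman functions or heat flow at all. It applies the Christ--Rubio de Francia/Hofmann weak $(1,1)$ bound for rough kernels, observing that $\|\Omega_k\|_{L^2(S^1)}\sim k$, to get weak type $(1,1)$ constant $\lesssim k$; then Marcinkiewicz plus Riesz--Thorin interpolation with the $L^2$ isometry gives $\|R^k\|_p\lesssim k^{2/p-1}/(p-1)$ for $1<p<2$, and duality finishes. This is far simpler than what you sketch, and it is the route the paper advertises as ``recycling the method from \cite{DPV06}''.

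Your approaches (i)--(iii) for the lower bound are not developed enough to assess, but (ii) in particular is speculative: laminate constructions exist for $R^2$ but a martingale picking up the $k^{1-2/p}$ factor for general $R^k$ is not in the literature and would be a result in its own right.
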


We complement this
with the sharp weak 1-1 estimate of $R^k$. It improves \cite[(4.82)]{AIM09},
where a quadratic estimate in terms of $k$ was established.
\begin{theorem}
\label{v-o-d-a}
We have
\[ \nor{R^k}_{L^1\rightarrow L^{1,\infty}}\,\sim\,k
\qquad
\text{for all } k\in\mathbb{N}. \]
\end{theorem}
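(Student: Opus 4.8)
The plan is to prove the two matching estimates $\nor{R^k}_{L^1\rightarrow L^{1,\infty}}\geqsim k$ and $\nor{R^k}_{L^1\rightarrow L^{1,\infty}}\leqsim k$ separately. The lower bound is essentially a computation with the explicit kernel $\Omega_k$ of \eqref{eq: Opel Kadett}; the upper bound, once the scalar factor $|k|$ is kept \emph{outside} the kernel, reduces to the theory of singular integral operators with rough homogeneous kernels. Neither direction needs the sharp $L^p$ bounds of Theorem~\ref{t: main}, although the lower one can also be extracted from them.

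\emph{Lower bound.} Since $R^k$ is a convolution operator, its weak $(1,1)$ norm dominates the weak-$L^1$ quasinorm of its kernel. Take $\rho_\e:=(\pi\e^2)^{-1}\mathbbm{1}_{B(0,\e)}$, so $\nor{\rho_\e}_1=1$. By \eqref{Uesugi Kenshin} and the smoothness of $\Omega_k$ away from $0$ one has, for $\e<\tfrac14$ and $\tfrac12<|z|<1$, the estimate $\big|(R^k\rho_\e)(z)-\Omega_k(z)\big|\leqsim\e\,|k|^2$. From \eqref{eq: Opel Kadett}, $|\Omega_k(z)|=|k|/(2\pi|z|^2)\geq|k|/(2\pi)$ there, so for $\e$ small enough (in terms of $k$) the set $\{\,|R^k\rho_\e|>|k|/10\,\}$ contains most of the annulus $\tfrac12<|z|<1$; hence $\nor{R^k\rho_\e}_{L^{1,\infty}}\geqsim|k|$ and $\nor{R^k}_{L^1\rightarrow L^{1,\infty}}\geqsim|k|$. (Equivalently, $R^k\rho_\e\to\Omega_k$ pointwise and $\nor{\cdot}_{L^{1,\infty}}$ is lower semicontinuous, so one may just use $\nor{\Omega_k}_{L^{1,\infty}}=|k|/2$. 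Or, still more quickly: Marcinkiewicz interpolation gives $\nor{R^k}_{3/2}\leqsim\nor{R^k}_{L^1\rightarrow L^{1,\infty}}^{1/3}\nor{R^k}_2^{2/3}=\nor{R^k}_{L^1\rightarrow L^{1,\infty}}^{1/3}$, so the lower half of Theorem~\ref{t: main} at $p=3/2$, which reads $\nor{R^k}_{3/2}\geqsim k^{1/3}$, already forces $\nor{R^k}_{L^1\rightarrow L^{1,\infty}}\geqsim k$.)

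\emph{Upper bound.} By \eqref{Uesugi Kenshin}, $R^k=\frac{i^{|k|}|k|}{2\pi}\,T_k$, where $T_k$ is the principal-value convolution operator with the homogeneous kernel $w\mapsto(w/|w|)^{-k}/|w|^2$. Its angular part $\zeta\mapsto\zeta^{-k}$ has constant modulus $1$ on $S^1$ and vanishing mean, so its $L^q(S^1)$ norm for every $q\geq1$ and its $L\log L(S^1)$ norm are absolute constants (if only real-valued statements are at hand, decompose $\zeta^{-k}$ into the Fourier modes $\cos(k\arg\zeta)$ and $\sin(k\arg\zeta)$, each bounded by $1$ with vanishing mean). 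The theory of rough homogeneous singular integrals—Calder\'on--Zygmund, Christ--Rubio de Francia, Seeger—then gives $\nor{T_k}_{L^1\rightarrow L^{1,\infty}}\leqsim1$ uniformly in $k$, whence $\nor{R^k}_{L^1\rightarrow L^{1,\infty}}=\frac{|k|}{2\pi}\nor{T_k}_{L^1\rightarrow L^{1,\infty}}\leqsim|k|$.

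The only delicate point is this last weak $(1,1)$ bound for $T_k$, and it is precisely what pins the growth at $k$ rather than $k\log k$: a direct Calder\'on--Zygmund estimate via the H\"ormander condition loses a factor $\log|k|$, because the kernel of $T_k$ has an angular derivative of size $|k|/|z|^3$ instead of $1/|z|^3$, so the H\"ormander integral only closes after summing over $\sim\log|k|$ dyadic scales. Eliminating this logarithm is exactly what the dyadic decomposition and almost-orthogonality of the rough-kernel method accomplish, using the uniform $L^2$ boundedness of $T_k$. A self-contained proof would run a Calder\'on--Zygmund decomposition at height $\alpha$, dispatch the good part with $\nor{R^k}_2=1$, and treat the bad part by splitting the kernel into dyadic annular pieces and optimizing the cut-off between the scales controlled by the size and the mean-zero cancellation of the bad functions and those controlled by the $L^2$ bound; this balancing is the step that genuinely requires care, everything else being routine.
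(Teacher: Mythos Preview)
Your proof is correct. The upper bound is exactly the paper's argument: factor out the $|k|$ from \eqref{eq: Opel Kadett} and apply the Christ--Rubio de Francia/Hofmann weak $(1,1)$ bound for rough homogeneous kernels with uniformly bounded angular part (the paper states this as Theorem~\ref{I'm going upstairs} and records the conclusion as \eqref{When was Jesus Born}).

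For the lower bound the paper does precisely what you list as your third option: it combines the interpolation inequality \eqref{schwach}, namely $\nor{R^k}_p\lesssim N_{1,k}^{2/p-1}/(p-1)$ for $p\in(1,2)$, with the $L^p$ lower bound \eqref{Lowest} to force $N_{1,k}\gtrsim k$. Your first two options---testing $R^k$ on an approximate identity to recover the kernel $\Omega_k$, or simply reading off $\nor{\Omega_k}_{L^{1,\infty}}=|k|/2$---are more elementary and self-contained, since they bypass Theorem~\ref{t: main} entirely. The paper's route has the virtue of exhibiting the weak $(1,1)$ lower bound as a formal consequence of the sharp $L^p$ behaviour, tying the two theorems together; your direct route has the virtue of making Theorem~\ref{v-o-d-a} logically independent of the much harder Theorem~\ref{t: Lowest}. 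Your closing remark about the H\"ormander condition only yielding $k\log k$ is also correct and explains why the rough-kernel machinery is genuinely needed for the sharp upper bound.
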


One might also ask about
rounding off
Theorem \ref{t: main} with estimates on $L^\infty$. That is, we wonder about the sharp behaviour of norms of $R^k\colon L^\infty\rightarrow BMO$ with respect to $k$. When $k=2$, Iwaniec \cite{I86} computed the exact norm of $R^2\colon L^\infty\rightarrow BMO_2$ and it equals $3$.
His proof heavily uses the fact that the integral kernel of the Ahlfors--Beurling operator, $\Omega_2$, is holomorphic on $\mathbb{C}\backslash\{0\}$. Since $\Omega_k$ is holomorphic {\it precisely} for $k=2$, extending
\cite{I86}
to arbitrary $R^k$ would likely require a
different argument. However we are still able to deduce a result that is sharp up to a logarithmic factor:
\begin{theorem}
We have
\label{thm: LinftyBMObound}
\[ k \,\lesssim\, \nor{R^k}_{L^\infty\rightarrow BMO} \,\lesssim\, k \log(k+1)
\qquad
\text{for all } k\in\mathbb{N}. \]
\end{theorem}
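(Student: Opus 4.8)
The plan is to establish the two bounds separately, exploiting the structure of the kernel $\Omega_k$ in \eqref{eq: Opel Kadett}.

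For the lower bound $k\lesssim\nor{R^k}_{L^\infty\rightarrow BMO}$, I would proceed by a duality/interpolation argument. Recall from Theorem \ref{t: main} that $\nor{R^k}_p\sim k^{1-2/p^*}(p^*-1)$; in particular, as $p\to\infty$ we have $\nor{R^k}_p\sim k\cdot p$ for large $p$. Since $R^k$ is a nice Calder\'on--Zygmund operator, one has the classical interpolation inequality relating the $L^\infty\to BMO$ norm, the $L^2$ norm, and the $L^p$ norm: for every $p\in(2,\infty)$,
\begin{equation*}
\nor{R^k}_p \lesssim \nor{R^k}_2^{2/p}\,\nor{R^k}_{L^\infty\to BMO}^{1-2/p}.
\end{equation*}
Since $\nor{R^k}_2=1$ and $\nor{R^k}_p\gtrsim k\,p$ for large $p$, optimizing over $p$ (taking $p\sim\log k$, say, or more simply letting $p\to\infty$ after extracting the right power) forces $\nor{R^k}_{L^\infty\to BMO}\gtrsim k$. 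Alternatively, and perhaps more cleanly, I would test $R^k$ directly against a bump function: applying $R^k$ to (a smooth truncation of) the characteristic function of the unit disc and estimating the resulting $BMO$ oscillation from below using the explicit kernel, much as in Iwaniec's computation for $k=2$; the factor $|k|$ in $\Omega_k$ produces the linear lower bound.

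For the upper bound $\nor{R^k}_{L^\infty\to BMO}\lesssim k\log(k+1)$, I would use the standard sufficient condition for an $L^\infty\to BMO$ estimate for a convolution operator with kernel $\Omega_k$: one controls, for each ball $B=B(x_0,r)$ and $f\in L^\infty$ with $\nor{f}_\infty\le 1$, the quantity $\frac{1}{|B|}\int_B |R^k f - c_B|$ for a suitable constant $c_B$. Split $f = f\mathbbm{1}_{2B} + f\mathbbm{1}_{(2B)^c}$. The local part is handled by the $L^2$ boundedness of $R^k$ (with norm $1$) via Cauchy--Schwarz, contributing an $O(1)$ term. For the far part, choosing $c_B = (R^k(f\mathbbm{1}_{(2B)^c}))(x_0)$, the estimate reduces to bounding $\int_{(2B)^c}|\Omega_k(x-w)-\Omega_k(x_0-w)|\,dA(w)$ uniformly for $x\in B$. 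Here the explicit form $\Omega_k(z)=\frac{i^{|k|}|k|}{2\pi}\,\frac{(z/|z|)^{-k}}{|z|^2}$ is crucial: the gradient satisfies $|\nabla\Omega_k(z)|\lesssim \frac{k(|k|+1)}{|z|^3}$ because differentiating the angular factor $(z/|z|)^{-k}$ brings down a factor $|k|$. A crude mean-value estimate would then give $\int_{(2B)^c}\lesssim k(|k|+1)\int_{|z|>r}\frac{r}{|z|^3}\,dA(z)\sim k^2$, which is too lossy. The improvement to $k\log(k+1)$ comes from splitting the annular region $\{r<|x_0-w|<\infty\}$ into a near part $\{r<|x_0-w|<kr\}$, where one uses the size bound $|\Omega_k(z)|\lesssim |k|/|z|^2$ directly (giving $\int\lesssim k\log k$), and a far part $\{|x_0-w|>kr\}$, where the mean-value/gradient bound is efficient and contributes $O(k)$ — more precisely, on the far part the oscillation is at most $\lesssim \frac{kr}{|z|}\cdot\frac{|k|}{|z|^2}$ cut off so that the angular displacement $|k|r/|z|\lesssim 1$, which after integration yields a bounded contribution times $k$.

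The main obstacle I anticipate is the sharp bookkeeping in the far-part estimate: one must capture the logarithmic gain by carefully balancing the size bound $|\Omega_k|\sim|k|/|z|^2$ against the smoothness bound $|\nabla\Omega_k|\sim k^2/|z|^3$ at the crossover scale $|z|\sim kr$, rather than applying either bound globally. This is essentially a one-variable integral estimate once the kernel's angular oscillation is written explicitly, but getting the constant independent of both $k$ and $r$ (and hence, by scaling, reducing to $r=1$) requires care. The lower bound, by contrast, should follow routinely either from interpolation against Theorem \ref{t: main} or from a direct test-function computation with the explicit kernel.
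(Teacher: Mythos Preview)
Your upper bound argument is correct and essentially coincides with the paper's: the paper phrases it dually as an $H^1\to L^1$ estimate, but the heart of the matter is precisely the H\"ormander-type integral
\[
\int_{|x|\geq 2|y|}|\Omega_k(x-y)-\Omega_k(x)|\,dx,
\]
which the paper bounds exactly as you propose, using the size estimate $|\Omega_k|\lesssim k/|x|^2$ on $\{2|y|\le|x|\le(k+1)|y|\}$ and the gradient estimate $|\Omega_k(x-y)-\Omega_k(x)|\lesssim k^2|y|/|x|^3$ on $\{|x|\ge(k+1)|y|\}$. Your identification of the crossover scale $|x|\sim k|y|$ is the key point.

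Your lower bound via interpolation, however, has a genuine gap. The inequality
\[
\nor{R^k}_p \lesssim \nor{R^k}_2^{2/p}\,\nor{R^k}_{L^\infty\to BMO}^{1-2/p}
\]
with an implied constant independent of $p$ is \emph{false}: already for the one-dimensional Hilbert transform $H$ one has $\nor{H}_2=1$, $\nor{H}_{L^\infty\to BMO}<\infty$, yet $\nor{H}_p\sim p$ as $p\to\infty$. Equivalently, the complex interpolation identity $[L^2,BMO]_\theta=L^p$ holds only with norm-equivalence constants that blow up like $p$. If you insert the correct factor of $p$ on the right-hand side the argument does go through (the growth $\nor{R^k}_p\sim k^{1-2/p}p$ from Theorem~\ref{t: main} then yields $B^{1-2/p}\gtrsim k^{1-2/p}$, hence $B\gtrsim k$), but that sharp interpolation constant itself requires justification and is not a one-line fact. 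Your fallback suggestion of testing against $\chi_D$ is too vague: the factor $|k|$ in $\Omega_k$ is accompanied by oscillation $(z/|z|)^{-k}$, so there is no automatic reason the BMO norm of $R^k\chi_D$ is $\gtrsim k$ rather than $O(1)$.

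The paper instead produces an explicit test function: it takes the Gaussian $g(z)=e^{-\pi|z|^2}$ and sets $\widehat{f_k}(\zeta)=(\zeta/|\zeta|)^k e^{-\pi|\zeta|^2}$, so that $R^k f_k=g$. A short computation (the $t=1$ case of Lemma~\ref{lm:fs}) gives an integral formula for $f_k$, and a single integration by parts in the angular variable shows $\nor{f_k}_\infty\lesssim 1/k$. Since $\nor{g}_{BMO}$ is a fixed positive constant, this yields $\nor{R^k}_{L^\infty\to BMO}\gtrsim k$ directly, without invoking Theorem~\ref{t: main} at all. This is both cleaner and logically independent of the paper's main $L^p$ result.
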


\subsection{Main focus
-- lower $L^p$ estimates for odd $k$}

As said above,
our primary goal is to
show that \eqref{13} also holds for {\it odd} $k\in\mathbb{N}$ (Theorem \ref{t: main}).
The easy part are the upper estimates:
indeed, by recycling
the method applied in \cite{DPV06} for the proof of the upper estimate in \eqref{13}, we
see
that
\begin{equation}
\label{31}
\nor{R^k}_p\leqsim k^{1-2/p^*}(p^*-1)
\end{equation}
 in fact holds for {\it all} positive integers $k$, not just the even ones
 treated in \cite{DPV06}. See \cite[Theorem 2.38]{D20}; for the reader's convenience, the proof there is reproduced in Section~\ref{Appendix}.

Taking \eqref{31} for granted, by \eqref{13}
the ``missing'' part---and the main effort in
this article---is now the following lower estimate.
\begin{theorem}
\label{t: Lowest}
We have
\begin{equation}
\label{Lowest}
\nor{R^k}_p\,\geqsim\, k^{1-2/p^*}(p^*-1)
\hskip 30pt
\text{for odd }k\in\mathbb{N}
\text{ and }p\in(1,\infty).
\end{equation}
The equivalence constants are absolute, that is, independent of $p$ or $k$.
\end{theorem}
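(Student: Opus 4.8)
The plan is: by duality reduce to $p\geq 2$, restrict $R^k$ to a rotation-invariant subspace on which it becomes a one-dimensional Fourier multiplier, and read that multiplier off at a single frequency. Since $R$ has a unimodular, $\zeta\mapsto-\zeta$--conjugation-symmetric symbol, duality gives $\nor{R^k}_p=\nor{R^k}_q$ (the $L^2$-adjoint of $R^k$ is $R^{-k}$, whose symbol is the complex conjugate of that of $R^k$, and conjugation of functions is an $L^q$-isometry), so we may assume $p\geq 2$, i.e. $p^*=p$. Because the symbol $\zeta\mapsto(\overline\zeta/|\zeta|)^k$ is rotation-commuting and $0$-homogeneous, $R^k$ preserves the subspace $V_k=\{z\mapsto g(|z|)(z/|z|)^k\}$ and sends it into the radial functions, the angular factor being cancelled exactly; as $V_k$ is isometric to $L^p((0,\infty),r\,dr)$ and the induced operator $T_k$ is dilation-invariant, the substitution $r=e^t$, $g(r)=e^{-2t/p}\widetilde g(t)$ turns $T_k$ into a Fourier multiplier $M_{\phi_{k,p}}$ on $L^p(\mathbb R)$. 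The classical formula for the Fourier transform of $|z|^{-\alpha}(z/|z|)^k$ gives, with $\alpha=2/p+i\tau$ and up to a unimodular constant,
\begin{equation*}
\phi_{k,p}(\tau)=\frac{\Gamma\!\big(\tfrac k2+1-\tfrac1p-\tfrac{i\tau}{2}\big)\,\Gamma\!\big(\tfrac1p+\tfrac{i\tau}{2}\big)}{\Gamma\!\big(\tfrac k2+\tfrac1p+\tfrac{i\tau}{2}\big)\,\Gamma\!\big(1-\tfrac1p-\tfrac{i\tau}{2}\big)}
\end{equation*}
(the constant is unimodular because $R^k$ is unitary on $L^2$, which forces $|\phi_{k,2}|\equiv 1$; this is also visible from $\Gamma(\bar z)=\overline{\Gamma(z)}$).

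Next, $\nor{R^k}_p\geq\nor{T_k}_{L^p(r\,dr)}=\nor{M_{\phi_{k,p}}}_{L^p(\mathbb R)}\geq\|\phi_{k,p}\|_{L^\infty(\mathbb R)}\geq|\phi_{k,p}(0)|$, where the middle step is the standard fact that the $L^p$-norm of a Fourier multiplier dominates the sup of its symbol (immediate from Riesz--Thorin interpolation between $L^p$ and $L^{p'}$, using $\nor{M_\phi}_{L^2}=\|\phi\|_\infty$ and $\nor{M_\phi}_{L^p}=\nor{M_\phi}_{L^{p'}}$). For $p\geq 2$,
\begin{equation*}
|\phi_{k,p}(0)|=\frac{\Gamma\!\big(\tfrac k2+1-\tfrac1p\big)}{\Gamma\!\big(\tfrac k2+\tfrac1p\big)}\cdot\frac{\Gamma(\tfrac1p)}{\Gamma(1-\tfrac1p)}\,,
\end{equation*}
and by Wendel's inequality the first factor is $\sim k^{1-2/p}$, while $\Gamma(1/p)\sim 1/(1/p)=p$ and $\Gamma(1-1/p)\sim 1$ give the second factor $\sim p=p^*\sim p^*-1$, with all implied constants absolute and uniform in $k\geq 1$. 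Hence $\nor{R^k}_p\geqsim k^{1-2/p^*}(p^*-1)$, which is \eqref{Lowest}; combined with \eqref{31} it upgrades to $\sim$ and thus also yields Theorem \ref{t: main}.

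A few words on where the difficulty lies. The argument is parity-blind, so it reproves the even case \eqref{13} as well; its entire substance is in the \emph{choice of test configuration} --- dualize to $p\geq 2$, land in $V_k$, and read the symbol precisely at $\tau=0$, which on $\mathbb C$ means testing against $|z|^{-2/p}(z/|z|)^k$ --- once this is chosen, the remaining estimates are routine. The one genuine pitfall is that the reduction to $p\geq 2$ is essential: for $p<2$ one has $|\phi_{k,p}(0)|\sim k^{1-2/p}$, a \emph{negative} power of $k$, so that test function is useless there; a direct treatment of $p<2$ would instead require extracting from $\phi_{k,p}$ near $\tau=0$ a Hilbert-transform factor mollified at scale $1/p^*$ (this is what produces the $p^*-1$), a quite different argument which is presumably one of the alternative proofs the authors give. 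Secondarily, the $\Gamma$-estimates above must be made with explicit (Wendel / Stirling-with-remainder) bounds, uniformly down to $k=1$ and across all $p\geq 2$, rather than as mere asymptotics --- elementary, but the only place real care is needed.
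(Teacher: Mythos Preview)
Your argument is correct and arrives at the same explicit lower bound $\gamma_k(p)=\Gamma(1/p)\Gamma(1/q+k/2)/\big(\Gamma(1/q)\Gamma(1/p+k/2)\big)$ that the paper obtains in Theorem~\ref{t: LowerExact}, but the route is genuinely different from all three of the paper's proofs. The paper works throughout with explicit approximating sequences to the (illegitimate) extremizer $|z|^{-2/p}$: the first proof computes $R^{-k}$ against a truncated power via the singular-integral kernel and hard estimates of oscillatory integrals; the second builds the approximants as superpositions of Gaussians on the Fourier side and estimates their $L^p$-norms directly; the third (closest to yours) recognises them as truncated Riesz potentials and uses the identity $R^k L^{-\alpha}\delta_0=c_{k,\alpha}(\bar z/|z|)^k|z|^{2(\alpha-1)}$, then controls the truncation errors. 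Your approach replaces all of this explicit approximation by a structural observation: restrict to the angular sector $V_k$, Mellin-transform to obtain a one-dimensional Fourier multiplier $M_{\phi_{k,p}}$ on $L^p(\mathbb R)$, and invoke the standard inequality $\|M_\phi\|_{L^p(\mathbb R)}\geq\|\phi\|_\infty\geq|\phi(0)|$ (which follows from $M_p=M_{p'}\subset M_2=L^\infty$). This is cleaner and parity-blind; what it trades away is self-containedness --- you need the Bochner--Hecke formula for the Fourier transform of $|z|^{-\alpha}(z/|z|)^k$ (or equivalently the paper's Lemma~\ref{l: 1}) to identify $\phi_{k,p}$, and the multiplier-space inclusion $M_p\subset L^\infty$, both classical but not proved here.

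One small point worth making explicit in a write-up: the isometry $\Phi_p\colon L^p((0,\infty),r\,dr)\to L^p(\mathbb R)$ depends on $p$, so the conjugated operator $\tilde T_{k,p}=\Phi_p T_k\Phi_p^{-1}$ is not literally the same operator for different $p$; one should say why its symbol on $L^p$ coincides with the analytic expression $\phi_{k,p}$ (e.g.\ by checking it agrees on a dense class such as compactly supported smooth functions, where the homogeneous computation can be made rigorous by a limiting argument --- this is exactly the content of the paper's Lemma~\ref{l: 2}). Once that is done, the Wendel/Stirling bounds you indicate are indeed uniform down to $k=1$ and across $p\geq2$; the paper records the precise two-sided estimate in Proposition~\ref{p: sharp asymptotics}.
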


We will present {\it three} different proofs of
Theorem~\ref{t: Lowest},
each of them having certain conceptual or aesthetic advantages, which is why we decided to include them all in this paper.
As a by-product of two of these proofs we establish a lower bound with an explicit constant (see Theorem~\ref{t: LowerExact}), which leads us to conjecture the exact values of $\nor{R^k}_p$ as follows:
\begin{conjecture}
\label{Konj}
For all $k\in\mathbb{N}$ and $p\geq2$ we have
$$
\nor{R^k}_p
=
\frac{\Gamma(1/p)\Gamma(1/q+k/2)}{\Gamma(1/q)\Gamma(1/p+k/2)}
$$
where $1/p+1/q=1$.
When $1<p<2$, the identity above should still hold, but with the roles of $p,q$ reversed, in accordance with Section~\ref{Clear}.

\end{conjecture}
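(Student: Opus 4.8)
\smallskip
The plan is to split Conjecture~\ref{Konj} into a lower and an upper estimate, reducing everything to the range $p\geq2$ via the $p\leftrightarrow q$ symmetry $\nor{R^k}_p=\nor{R^k}_q$ (which follows from $(R^k)^*=R^{-k}$ together with $\nor{R^{-k}}_p=\nor{R^k}_p$; see Section~\ref{Clear}). For $p\geq2$ the lower bound
\[
\nor{R^k}_p\;\geq\;\frac{\Gamma(1/p)\,\Gamma(1/q+k/2)}{\Gamma(1/q)\,\Gamma(1/p+k/2)}
\]
is exactly Theorem~\ref{t: LowerExact}: one tests $R^k$ against the critically homogeneous family $f_\varepsilon(z)=(z/|z|)^{m}\,|z|^{-2/p}\,\mathbbm{1}_{\{\varepsilon<|z|<1/\varepsilon\}}$, evaluates $(R^kf_\varepsilon)(z)$ from the explicit kernel \eqref{eq: Opel Kadett} in polar coordinates (the angular integral is elementary, and the radial convolution then collapses to a Beta integral), optimises over $m\in\mathbb{Z}$, and lets $\varepsilon\to0^{+}$ so that the bulk term $\sim\log(1/\varepsilon)$ dominates the edge contributions. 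It therefore remains to prove the matching \emph{upper} bound $\nor{R^k}_p\leq\frac{\Gamma(1/p)\Gamma(1/q+k/2)}{\Gamma(1/q)\Gamma(1/p+k/2)}$ for $p\geq2$.

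For this I would first attempt the Bellman function method, which currently produces the sharpest known constants when $k=2$. One seeks a function $B$ of the Burkholder-type variables adapted to $R^k$ (modelled on the ansatz conjectured to be sharp for the Ahlfors--Beurling operator), of size comparable to the right-hand side above, dominating the bilinear form $|\sk{R^kf}{g}|$, and satisfying the pointwise concavity / Monge--Amp\`ere-type inequality dictated by the heat or Poisson extension intertwined with the symbol $e^{-ik\theta}$; the boundary data of $B$ would be read off from the extremising family of Theorem~\ref{t: LowerExact}. A second, independent route is a sharp stochastic representation: since $R^k$ has symbol $(\xi_1-i\xi_2)^k/|\xi|^k$, i.e.\ is a higher-order Riesz-type operator, it can be written as a conditional expectation of a martingale transform of the background heat martingale, whence Burkholder's sharp martingale inequality yields an upper bound --- but, as for the Ahlfors--Beurling transform, one expects a loss of a multiplicative factor because the resulting transform is not a pure $\pm1$ transform. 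A third possibility is a sharp subordination identity reflecting the fact that the Fourier symbols multiply; note that the crude submultiplicativity $\nor{R^{k_1+k_2}}_p\leq\nor{R^{k_1}}_p\nor{R^{k_2}}_p$ is hopelessly lossy here, predicting exponential growth in $k$ whereas the truth is linear.

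The genuine obstacle is that the case $k=2$ of the required upper bound is precisely the Iwaniec conjecture $\nor{R^2}_p=p^*-1$, which remains open: every known technique falls short, the probabilistic one losing the notorious factor $2$ and the Bellman/laminate approach resting on an apparently intractable pointwise (quasiconvexity) inequality. Hence any complete proof of Conjecture~\ref{Konj} must contain a proof of the Iwaniec conjecture. Realistic intermediate targets are: (i) to confirm the Conjecture asymptotically as $p\to\infty$, where its right-hand side is $\sim\frac{k}{2}\,p$, consistently with the absolute-constant asymptotics of Theorems~\ref{t: main} and~\ref{t: Lowest}; (ii) to settle the single operator $k=1$, whose subordination structure is simplest; and (iii) to decide whether, after decomposing into the $R^k$-invariant subspaces $\{(z/|z|)^{m}g(|z|)\}$ and passing to the resulting family of one-dimensional Mellin multipliers (indexed by $m$), the conjectured constant is already attained --- which would at least certify optimality of the extremising family within these invariant subspaces.
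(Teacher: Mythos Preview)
The statement you are asked to prove is labelled \emph{Conjecture} in the paper, and indeed the paper does not prove it: it establishes only the inequality $\geq$ (Theorem~\ref{t: LowerExact}), while explicitly noting that the case $k=2$ of the reverse inequality is the long-standing Iwaniec conjecture. So there is no ``paper's own proof'' to compare against, and your proposal is not a proof either --- rather, it is an honest assessment of the situation, which is the correct response here. You accurately isolate the genuine obstruction: any proof of the upper bound for all $k$ would in particular resolve the Iwaniec conjecture, and none of the Bellman-function, martingale-transform, or submultiplicativity approaches you list is currently known to close that gap.

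One minor comment on your sketch of the lower bound: the description ``the angular integral is elementary, and the radial convolution then collapses to a Beta integral'' understates the work involved. The paper gives three proofs of Theorem~\ref{t: LowerExact}, and the kernel-based one (Sections~\ref{s: baba brez maske}--\ref{Zemi ogin}) is far from a one-line Beta-integral computation --- the oscillatory angular integral requires a careful decomposition exploiting the sign changes of $\cos k\varphi$. The cleaner routes to the exact constant are the Gaussian-superposition argument of Section~\ref{s: Vjeko's} and the Riesz-potential argument of Section~\ref{sec: Andrea proof}, neither of which quite matches your description of testing against $|z|^{-2/p}\mathbbm{1}_{\{\varepsilon<|z|<1/\varepsilon\}}$ and computing directly from the kernel. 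Your overall picture of the extremising family is right, but the execution in the paper is more delicate than your sketch suggests.
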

We arrived at this conjecture by first proving one direction $(\geq)$ of the above identity, see
\cite[Theorem 1.1]{D11-2} for the even case and Section~\ref{s: Vjeko's} for the odd case, while Section~\ref{sec: Andrea proof} gives a proof that unifies both cases.
Conjecture~\ref{Konj} was in the case of even $k$
formulated in \cite[p.508]{DPV06} and \cite[Conjecture 1.2]{D11-2}, while the case $k=2$ has been known for about 40 years as the {\it Iwaniec conjecture} and simply reads
$\nor{R^2}_p=p^\ast-1$, see
\cite[Conjecture 1]{I82}.
It remains open at the moment of this writing.

\medskip
The
difficulties
we are facing while
proving
Theorem~\ref{t: Lowest}
are due to several reasons,
on top of
those explained in Section~\ref{s: Odd case} below:
\begin{enumerate}[(i)]
\item
we are dealing with oscillatory integrals,
\item
we are trying to estimate such integrals {\it from below},
\item
we want sharp estimates, and
\item
the sharpness should be attained simultaneously in {\it two} parameters ($k,p$).
\end{enumerate}

\subsection{Why is the odd case different?}
\label{s: Odd case}
Unlike when addressing the {\it upper} $L^p$ estimates \eqref{31},
when pursuing
the {\it lower} estimates for $R^k$ with {\it odd} $k$, we
cannot simply mimic the proofs from the even case; the latter were hinted at in \cite{DPV06} and rigorously carried out in \cite{D11-2}. For while $R^{2n}$ has a ``nice'' description in the sense of mapping $\pd_{\bar z}^n f\mapsto\pd_{z}^n f$ \cite[Lemmas V.2,3]{Ahlfors}, \cite[Hilfssatz 7.2]{LV65}, \cite[(4.18)]{AIM09},
which was used in \cite{DPV06} and \cite{D11-2},
the only analogous representation that $R$ admits is
\begin{equation}
\label{eq: Delta repres}
(-\Delta)^{1/2}f\mapsto -2i\pd_{z}f
\end{equation}
(which quickly follows by applying the Fourier transform).
See \cite[Example 3.7.5]{D} or Section~\ref{sec: Andrea proof} for details regarding $(-\Delta)^{1/2}$.
Consequently, $(-\Delta)^{1/2}$ intervenes in representations of all {\it odd} powers of $R$. However $(-\Delta)^{1/2}$ is not a
local operator, which makes dealing with it
considerably more difficult.

\medskip
Still, one can say something about the lower estimates of $\nor{R^k}_p$ for odd $k=2l-1$ based on the even case. Namely, we obviously have
$\nor{R^{4l-2}}_p \leq\nor{R^{2l-1}}_p^2$ and $\nor{R^{2l}}_p\leq\nor{R^{2l-1}}_p\nor{R}_p$
which, together with \eqref{13}, \eqref{31} and the estimate $\nor{R}_p\leqsim p^*-1$ \cite{IM96}, gives
\medskip
\noindent
\begin{equation}
\label{Last Month of the Year}
\max\left\{
l^{1-2/p^*},l^{1/2-1/p^*}\sqrt{p^*-1}
\right\}
\leqsim
\nor{R^{2l-1}}_p
\leqsim
l^{1-2/p^*}(p^*-1),
\end{equation}
for all $l\in\mathbb{N}$ and $p>1$, with the implicit constants independent of $l,p$.

\begin{remark}
The behaviour of $L^p$ estimates of powers of the Ahfors--Beurling operator $R^2$ has also been studied on {\it weighted} $L^p$ spaces. The first paper in this direction was \cite{D11-2}. The estimates obtained there were significantly improved by Hyt\"onen \cite{Hy11, Hy14} and Hyt\"onen, Roncal and Tapiola \cite{HRT17}. See \cite[Section 2.8]{D20} for a summary of these results.
\end{remark}

\subsection{A few reductions}

\subsubsection{Invariance of norms of powers of $R$}
\label{Clear}
Recall that the powers $R^k$ were also defined for {\it negative} integers $k$. We have
that $R^k$ is the inverse of $R^{-k}$ \cite[p.~102]{AIM09}
on $L^2(\mathbb{C})$.
From \eqref{eq: Opel Kadett} we get
$\Omega_{-k}(z)=\Omega_k(\bar z)=\overline{\Omega_k(z)}$,
which quickly implies $R^{-k}f=\overline{R^k\bar f}$.
Consequently,
$
\nor{R^{-k}}_p=\nor{R^{k}}_p.
$
Since $R^{-k}$ is the $L^2(\mathbb{C})$-adjoint of $R^k$ \cite[p.~102]{AIM09}, it follows that $\nor{R^k}_p=\nor{R^k}_{q}$.
So
$\nor{R^k}_p$
depend on $|k|$ and $p^*$ only. Therefore we may assume that $k\in\mathbb{N}$ and $p\geq2$.

\subsubsection{Case $k=1$}
We separately consider \eqref{Lowest} for $k=1$, which now reads
\begin{equation}
\label{Leadbelly}
\nor{R}_p\,\geqsim\, p
\hskip 30pt
\text{for } p\in(2,\infty).
\end{equation}

Let $f\in L^p(\mathbb{C})$ be a real-valued function. Since the scalar Riesz transforms $R_1,R_2$ map
real-valued functions back into real-valued functions, we have
$
|Rf|=|R_2f+iR_1f|\geq |R_1f|,
$
therefore
$
\nor{Rf}_p \geq \nor{R_1f}_p.
$
Owing to a theorem attributed to Marcinkiewicz and Zygmund \cite[Proposition 3.1]{IM96}, the norm of $R_1$ on the space $L^p$ (understood, as usual, as a space of complex-valued functions) is equal to the $L^p$ norm of $R_1$ over real-valued functions only. Therefore we may find a nonzero real-valued $f\in L^p(\mathbb{C})$ such that $\nor{R_1f}_p\geq \nor{R_1}_p\nor{f}_p/2$.
Now \cite[Theorem 1.1]{IM96} gives $\nor{R_1}_p=\cot(\pi/(2p))\geqsim p$ for $p\geq2$, which proves \eqref{Leadbelly}.

For a more general treatment of norms of operators in relation to complexifications see the monograph by Hyt\"onen et al. \cite[Section 2.1]{HvNVW16}.

\subsubsection{Case $p\approx 2$}
When $p\approx 2$, the estimate $\nor{R^k}_p\geqsim k^{1-2/p^*}$, provided by \eqref{Last Month of the Year}, is
equivalent to the desired estimate
\eqref{Lowest}, because $2l-1\sim l$ and for $p\approx 2$ we have $p^*-1\sim 1$.
This settles Theorem \ref{t: Lowest} for $p\approx 2$.

\medskip
These observations together imply that, unless stated otherwise, we may
\begin{equation}
\label{kp3}
\text{
\it{from now on assume that $k,p\geq3$, where $k$ is an odd integer.}
}
\end{equation}

\subsection{The $L^p$ spectrum of $R$}
Finally, we present a simple corollary of Theorem~\ref{t: main}, modelled after \cite[Corollary 1 and Theorem 2]{D11}. Let $\sigma(R)$ denote the {\it spectrum} of $R$ and $\sigma_c(R)$ its {\it continuous spectrum}.
\begin{corollary}
Fix $p>1$ and consider $R$ as a bounded operator on $L^p$. Then
$$
\sigma(R)
=
\sigma_c(R)
=
\mn{\zeta\in\mathbb{C}}{|\zeta|=1}.
$$
\end{corollary}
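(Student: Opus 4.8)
The plan is to deduce the corollary from Theorem~\ref{t: main} together with the obvious spectral inclusions coming from the symbol. First I would observe that on $L^2$ the operator $R$ is the Fourier multiplier with symbol $\zeta\mapsto\bar\zeta/|\zeta|$, whose essential range is the whole unit circle $\mathbb{T}=\{\zeta\in\mathbb{C}:|\zeta|=1\}$; hence on $L^2$ we have $\sigma(R)=\sigma_c(R)=\mathbb{T}$, with no eigenvalues (the level sets of the symbol have measure zero). Since $R$ extends to a bounded operator on $L^p$ for every $p\in(1,\infty)$, and since $L^p$-spectra are nested in a way controlled by interpolation/duality (the Riesz--Thorin theorem together with the self-improving nature of resolvents on the scale $L^p$, $1<p<\infty$), every $\lambda$ lying \emph{outside} $\sigma_{L^p}(R)$ for some $p$ lies outside $\sigma_{L^2}(R)$ as well; equivalently $\sigma_{L^2}(R)\subseteq\sigma_{L^p}(R)$, so $\mathbb{T}\subseteq\sigma_{L^p}(R)$ for all $p$.

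For the reverse inclusion $\sigma_{L^p}(R)\subseteq\mathbb{T}$, fix $\lambda$ with $|\lambda|\neq1$ and produce the resolvent $(R-\lambda I)^{-1}$ explicitly on $L^p$. The point is that $R$ is a "nice" operator in the sense that its powers are uniformly controlled: if $|\lambda|>1$ one writes formally $(R-\lambda I)^{-1}=-\lambda^{-1}\sum_{k\geq0}\lambda^{-k}R^k$, and Theorem~\ref{t: main} gives $\nor{R^k}_p\lesssim k^{1-2/p^\ast}(p^\ast-1)\lesssim_p k$, so the series converges absolutely in operator norm on $L^p$ and defines a bounded inverse. If $|\lambda|<1$ one instead uses the negative powers: recall from Section~\ref{Clear} that $R^{-1}$ is bounded on $L^p$ with $\nor{R^{-k}}_p=\nor{R^k}_p$, so $R-\lambda I=R(I-\lambda R^{-1})$ and $(I-\lambda R^{-1})^{-1}=\sum_{k\geq0}\lambda^k R^{-k}$ converges for $|\lambda|<1$; composing with $R^{-1}$ gives the resolvent. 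In either regime $\lambda\notin\sigma_{L^p}(R)$, which is the desired inclusion. Thus $\sigma_{L^p}(R)=\mathbb{T}$.

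It remains to show there is no residual or point spectrum, i.e.\ that $\sigma_{L^p}(R)=\sigma_c(R)$, equivalently that for every $\lambda\in\mathbb{T}$ the operator $R-\lambda I$ is injective with dense range on $L^p$. Density of the range: if it failed, by Hahn--Banach and the identification of $(L^p)^\ast=L^q$ there would be a nonzero $g\in L^q$ annihilating the range, i.e.\ $(R^\ast-\bar\lambda I)g=0$; but $R^\ast$ on $L^q$ is (up to conjugation, cf.\ Section~\ref{Clear}) again of the same type, so this reduces to injectivity. For injectivity, suppose $Rf=\lambda f$ with $f\in L^p$, $f\neq0$; since $R$ commutes with translations and dilations, so does the eigenspace $\ker(R-\lambda I)$, and on $L^2$ this kernel is trivial because the symbol attains each unimodular value only on a null set. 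For general $p$ one passes to $L^2$ by a density/approximation argument: modulating and dilating $f$ and testing against Schwartz functions shows the eigenvalue equation forces $\widehat f$ to be supported on the null set $\{\bar\zeta/|\zeta|=\lambda\}$, hence $f=0$. This is modelled exactly on \cite[Corollary 1 and Theorem 2]{D11}. The main obstacle I anticipate is making the "no point spectrum on $L^p$" step clean without the Hilbert-space orthogonality: one must argue carefully that an $L^p$ eigenfunction cannot exist when its putative Fourier support is a measure-zero set, which is where invoking the structure of \cite{D11} (translation/dilation invariance of the multiplier together with a distributional support argument) does the work.
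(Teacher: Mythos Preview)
Your overall strategy is sound and the key step---using Theorem~\ref{t: main} to sum the Neumann series $\sum\lambda^{-k}R^k$ (respectively $\sum\lambda^k R^{-k}$) for $|\lambda|\neq1$---is exactly the mechanism that drives the result, and it matches what \cite{D11} does for $R^2$. Two points deserve tightening. First, the inclusion $\sigma_{L^2}(R)\subseteq\sigma_{L^p}(R)$ is not a general fact about $L^p$-spectra and your appeal to ``self-improving resolvents'' is not an argument; you can rescue it here, but only \emph{after} you have already shown $\sigma_{L^p}(R)\subseteq\mathbb{T}$ (then for $|\lambda|=1$ duality gives the $L^q$-resolvent from the $L^p$-resolvent since $R^*=R^{-1}$ and $1/\bar\lambda=\lambda$, and Riesz--Thorin interpolates to $L^2$). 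A cleaner route is to note that $R$ is unitarily conjugate to $e^{i\theta}R$ via rotations, so $\sigma_{L^p}(R)$ is rotation-invariant, hence a nonempty closed subset of $\mathbb{T}$ that is rotation-invariant must be all of $\mathbb{T}$. Second, your injectivity argument for $p>2$ is where the real work lies: an $L^p$-eigenfunction has Fourier transform only as a tempered distribution, and ``supported on a null set implies zero'' is precisely the step that needs the careful distributional argument from \cite{D11}.

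The paper's own proof sidesteps both issues by reducing everything algebraically to the $R^2$ case already treated in \cite{D11}: from $R^2-\lambda^2 I=(R-\lambda I)(R+\lambda I)$ one gets $\mathrm{Im}(R^2-\lambda^2 I)\subset\mathrm{Im}(R-\lambda I)$ (giving dense range from the known dense range for $R^2$) and $\ker(R-\lambda I)\subset\ker(R^2-\lambda^2 I)$ (giving injectivity). This is shorter because it recycles the delicate $p>2$ distributional argument rather than reproving it, whereas your direct approach, once the two soft spots are patched, is self-contained and arguably more transparent about \emph{why} the result holds.
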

The result follows by modifying the proofs from \cite{D11} where the same was proved with $R^2$ in place of $R$, and using the trivial inclusion Im$(R^2-I)\subset$ Im$(R-I)$.

\subsection{Paper outline}
As mentioned above, we present three proofs of the paper's main result, Theorem~\ref{t: Lowest}.
All three involve certain sequences which essentially approximate the same extremal function and which will be motivated in Section~\ref{sec: approx seq}.
However, the proofs feature different approaches, of course.

The first proof, presented in Sections~\ref{s: baba brez maske}--\ref{Zemi ogin}, is an ``integral kernel'' proof.
It relies on the very explicit approximating sequence from \eqref{eq: approx in Oliver} below and the integral representation \eqref{Uesugi Kenshin}.
Overall, the proof is completely elementary and entails considerations exclusively on the spatial side (as opposed to the Fourier domain).

The second proof, presented in Section~\ref{s: Vjeko's}, could be called the ``Gaussian--Fourier'' proof and relies on the fact that $R$ is the Fourier multiplier with symbol \eqref{eq: Four symbol}.
An advantage of this approach is that it is significantly shorter and it also yields an explicit constant in the lower bound for $\|R^k\|_p$.
Now the main idea is to work on the Fourier side and construct the approximating functions as superpositions of dilated Gaussians. This makes them less explicit, but their inverse Fourier transforms can be computed and estimated in the $L^p$ norm.

The third proof, presented in Section~\ref{sec: Andrea proof},
is a ``Riesz potential'' proof and it relies on the representation \eqref{eq: Delta repres} of $R$.
It shares features with both preceding proofs and it was, in fact, heavily motivated by them. It uses an extremizing sequence similar to the one used in the first proof, but it interprets it rather as smooth truncations of the Riesz potentials applied to the Dirac distribution, on which $R^k$ acts quite elegantly. Similarities with the second proof become visible after one recalls that Riesz and Bessel potentials can be conveniently decomposed into superpositions of Gaussians: a trick which can be traced back at least to Stein \cite[Chapter~V, \S 3.1]{S70}.
This proof gives the same explicit constant as the second one and it works equally well for odd and even $k\in\mathbb{N}$.
Moreover, on the formal level it explains the constant from Theorem~\ref{t: LowerExact} (or Conjecture~\ref{Konj}) in a rather straightforward manner.

Finally, Section~\ref{Appendix} completes the proof of Theorem~\ref{t: main} by elaborating on the upper bound. Short proofs of Theorems~\ref{v-o-d-a} and \ref{thm: LinftyBMObound} are also presented in that section.

\section{Construction of the approximating sequence}
\label{sec: approx seq}

Let us phrase the very basic idea behind the proofs.

\subsection{Model case: even powers \cite{D11-2}}
When proving $\nor{R^{2n}}_p\geqsim n^{1-2/p^*}(p^*-1)$ in \linebreak\cite{D11-2}, the second-named author of the present paper constructed an adequate sequence $f_\varepsilon=f_{n,\varepsilon}$ for which he then calculated
$$
\lim_{\varepsilon\searrow0}\frac{\nor{R^{2n} (\pd_{\bar z}^nf_\varepsilon)}_p }{\hskip 18pt \nor{\pd_{\bar z}^nf_\varepsilon}_p}
$$
and proved that the outcome was comparable to $n^{1-2/p^*}(p^*-1)$.
Here
$\pd_{\bar z}^nf_\varepsilon$ was of the form
$$
C(n,\varepsilon)\left(\frac{z}{|z|}\right)^{2n}|z|^{-2(1/p-\varepsilon)}\chi_ D (z)+(E.T.)\chi_{ D ^c}(z),
$$
with
$D$ denoting the open unit disc in $\mathbb{C}$ and
``$(E.T.)$'' standing for ``error term''.
Its construction was driven by
the requirement that $f_{n,\varepsilon}$ belong to the Sobolev space $W^{n,p}(\mathbb{C})$.

\subsection{Basic setup}
Therefore in order to reach $R^k$ for arbitrary integer $k$, a reasonable starting point seems to be the family of functions $F_\varepsilon=F_{k,p,\varepsilon}$, defined by
\begin{equation*}
F_\varepsilon(z):=\left(\frac{z}{|z|}\right)^{k}|z|^{-2/p+2\varepsilon}\chi_ D (z).
\end{equation*}
In order to prove Theorem~\ref{t: Lowest} we need
to understand if indeed
\begin{equation}
\label{Eminem Mariah Carey Diss}
\liminf_{\varepsilon\searrow0}\frac{\nor{R^kF_\varepsilon}_p }{\nor{F_\varepsilon}_p}\,\geqsim\, k^{1-2/p}(p-1)
\end{equation}
uniformly in $k\in\mathbb{N}$ and $p>2$.
Clearly, $\nor{F_\varepsilon}_p\sim \varepsilon^{-1/p}$.

\subsection{Switching to upper estimates}
The main task is to estimate the norm $\nor{R^kF_\varepsilon}_p$ from {\it below}.
However, estimating integrals from below, especially (oscillatory) singular integrals, seems difficult. Yet in our case we are in a position to formulate the problem in terms of equivalent {\it upper} estimates.
Indeed, by
writing ${\text{\gotxi F}}_\varepsilon={\text{\gotxi F}}_{k,\varepsilon}:=R^kF_\varepsilon$ we can express \eqref{Eminem Mariah Carey Diss} as
\begin{equation}
\label{Ciklovir}
\limsup_{\varepsilon\searrow0}\frac{\nor{R^{-k}\text{\gotxi F}_\varepsilon}_p}{\nor{\text{\gotxi F}_\varepsilon}_p}
\leqsim\frac{1}{k^{1-2/p}(p-1)}
\,.
\end{equation}
The problem is that we do not know an explicit ``algebraic'' formula for $\text{\gotxi F}_\varepsilon$.
Again we resort to \cite{D11-2} for hints. Namely,
\begin{itemize}
\item
for even $k$, we ``almost'' have $(R^kF_\varepsilon)\big|_ D \equiv |z|^{-2/p+2\varepsilon}$ (cf. \cite[p. 466]{D11-2}),
\item
experience from \cite{D11-2} also suggests that what matters is $F_\varepsilon\big|_ D $ and $(R^kF_\varepsilon)\big|_ D $.
\end{itemize}
Thus the idea is to apply $R^{-k}$ not to ${\text{\gotxi F}}_\varepsilon$, but instead to the
 function $\Phi_\varepsilon$, defined {\it explicitly} by
\begin{equation}\label{eq: approx in Oliver}
\Phi_\varepsilon(z):=|z|^{-2/p+2\varepsilon}\chi_{ D }(z).
\end{equation}
We readily see that $\nor{\Phi_\varepsilon}_p=\nor{F_\varepsilon}_p\sim\varepsilon^{-1/p}$.

Observe that $\Phi_\varepsilon$ does {\it not} depend on $k$, unlike $F_\varepsilon$. Still, we will show that $(\Phi_\varepsilon)_{\varepsilon>0}$ constitutes an extremal sequence that establishes the sharp lower bound for $\nor{R^k}_p$ for {\it every} odd $k\in\mathbb{N}$.

\subsection{Useful variants}
What we showed is not the only reasonable way to approximate $|z|^{-2/p}\chi_ D (z)$. One variant, used in Section~\ref{s: Vjeko's}, starts with the observation that the Fourier transform of the tempered distribution $|z|^{-2/p}$ is a constant multiple of $|z|^{-2+2/p}$. These homogeneous functions can be conveniently realized as superpositions of dilated Gaussians and, this time, it is more convenient to localize them in the frequency domain.
Another variant, used in Section~\ref{sec: Andrea proof}, uses an approximating sequence closer to \eqref{eq: approx in Oliver}, but recognizes it rather as truncations of the Riesz potentials $(-\Delta)^{\alpha}\delta_0$.
However, each of the three variants keep either $|z|^{-2/p}$ or $|z|^{-2/p}\chi_ D (z)$ in mind as an ``ideal''---albeit not legitimate---extremizing function.

\section{The first proof of Theorem~\ref{t: Lowest}: Explication of $R^{-k}\Phi_\varepsilon$}
\label{s: baba brez maske}

By \cite[Corollary 4.9]{D01}, the identity \eqref{Uesugi Kenshin} is valid with $f=\Phi_\varepsilon$ almost everywhere on $\mathbb{C}$.
Hence
 we immediately obtain
$(R^{-k}\Phi_\varepsilon)(e^{i\psi}z)=e^{ik\psi}(R^{-k}\Phi_\varepsilon)(z)$.
Thus it suffices to study $(R^{-k}\Phi_\varepsilon)(u)$ for $u>0$.
Explicitly,
using the polar coordinates gives
\begin{equation*}
\nor{R^{-k}\Phi_\varepsilon}_p^p=2\pi\int_0^\infty|(R^{-k}\Phi_\varepsilon)(u)|^pu\,du\,.
\end{equation*}
Let us denote by $\cR_k$ the operator $R^k$ without the normalization $i^{|k|}|k|/(2\pi)$ from \eqref{eq: Opel Kadett}. The same for $\cR_{-k}$ and $R^{-k}$, of course. Then \eqref{Uesugi Kenshin} turns into
\begin{equation}
\label{Fade away}
(\cR_{-k}\Phi_\varepsilon)(z)=
\lim_{\delta\searrow0}\int_{\{|w|>\delta\}\cap\{|w-z|<1\}}
\frac{(w/|w|)^{k}}{|w|^2}|z-w|^{-2/p+2\varepsilon}\,dA(w).
\end{equation}
Hence the targeted inequality \eqref{Ciklovir} now, after
switching from ${\text{\gotxi F}}_\varepsilon$ to $\Phi_\varepsilon$, reads
\begin{equation}
\label{Kronos}
\limsup_{\varepsilon\searrow0}
\left(
\varepsilon\int_0^\infty|(\cR_{-k}\Phi_\varepsilon)(u)|^pu\,du
\right)^{1/p}
\leqsim\frac{1}{k^{2(1-1/p)}(p-1)}.
\end{equation}

In
the continuation
we further analyze $(\cR_{-k}\Phi_\varepsilon)(u)$, with the aim of obtaining more revealing expressions of this term. Our plan is to separately treat the cases of $0<u<1$ and $u>1$.
We will use the following notation: for $0<\e<1/p$ let
\begin{equation}
\label{19vodki}
\sigma=\sigma(p,\varepsilon):=1/p-\varepsilon.
\end{equation}
For $a>0$, $b\in(0,1)$ and $c=\sqrt{a^2+b^2}$ define $H=H_{\sigma}:(0,\infty)\times(0,1)\rightarrow\mathbb{R}$ as
\begin{equation}
\label{Scherbakov Symphony no.5}
\aligned
H({a},{b})
&=
\int_0^{c-b}
\frac{(1-2{b}s+s^2)^{-\sigma}-(1+2{b}s+s^2)^{-\sigma}}{s}
\,ds+
\int_{c-b}^{c+{b}}
\frac{(1-2{b}s+s^2)^{-\sigma}}{s}\,ds
\\
&=:H^0+H^1.
\endaligned
\end{equation}
In our applications we will typically use
\begin{equation*}
a=\sqrt{1/u^2-1},
\hskip 25pt
b=\cos\varphi,
\hskip 25pt
c=\sqrt{a^2+b^2}=\sqrt{1/u^2-\sin^2\varphi}.
\end{equation*}
This use will arise (and in case of $a$ only makes sense) when $0<u<1$.

\begin{proposition}
\label{Markovina Velikic}
Fix $p\in(1,\infty)$, $\varepsilon>0$ and an odd $k\in\mathbb{N}$.
Write $\sigma=1/p-\varepsilon$ as in \eqref{19vodki} and
let $H_\sigma$ be given by
\eqref{Scherbakov Symphony no.5}.
For any $u\in(0,1)\cup(1,\infty)$ we have
\begin{equation}
\label{Utushka lugovaya}
(\cR_{-k}\Phi_\varepsilon)(u)=2u^{-2\sigma}G_\sigma(u),
\end{equation}
where $G_\sigma=G_{k,\sigma}:(0,\infty)\rightarrow\mathbb{R}$ is defined as follows:
\vskip 10pt
\ \hskip 5pt
$
{\displaystyle G_\sigma(u)
=
\left\{
\begin{array}{c}
\ \\
\ \\
\ \\
\ \\
\ \\
\end{array}
\right.
}
$
\vskip - 73pt
\hskip 30pt
\begin{align}
\label{Kalinnikov Symphony no.1}
&\ \hskip 50pt {\displaystyle
 \int_0^{\pi/2}
H_\sigma\left(\sqrt{1/u^2-1},\cos\varphi\right)
\cos k\varphi
\,d\varphi
}
\ &
 & \text{ if }0<u<1,
\\
\label{Braun HC5030}
&\ \hskip 50pt {\displaystyle
\int_0^{\pi/2}
\int_0^{1/u^2}
\cos(k\arcsin(\sqrt{s}\cos\psi))\,
\frac{ds}{(1-s)s^{\sigma}}\,d\psi
}
 &
  & \text{ if }u>1.
\end{align}
\end{proposition}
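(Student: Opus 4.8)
The approach is to compute $(\cR_{-k}\Phi_\varepsilon)(u)$ directly from \eqref{Fade away}. Passing to polar coordinates $w=re^{i\varphi}$ and then rescaling $r=us$ pulls out the factor $u^{-2\sigma}$, rewrites the region $\{|w-u|<1\}$ as $\{1-2s\cos\varphi+s^2<1/u^2\}$, and leaves
\[
(\cR_{-k}\Phi_\varepsilon)(u)=u^{-2\sigma}\lim_{\delta\searrow0}\int_0^{2\pi}\int_{\{s>\delta\}\cap\{1-2s\cos\varphi+s^2<1/u^2\}}\frac{e^{ik\varphi}}{s}\,(1-2s\cos\varphi+s^2)^{-\sigma}\,ds\,d\varphi .
\]
Since the weight $1-2s\cos\varphi+s^2$ and the region depend on $\varphi$ only through $\cos\varphi$, the reflection $\varphi\mapsto-\varphi$ replaces $e^{ik\varphi}$ by $\cos k\varphi$, and the resulting symmetry about $\varphi=\pi$ turns $\int_0^{2\pi}$ into $2\int_0^{\pi}$. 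From here I would split according to whether $u<1$ or $u>1$, the dichotomy being dictated by whether $1/u^2>1$ (so that $s=0$ sits inside the region and the principal value is genuinely needed) or $1/u^2<1$ (so that the region stays away from the origin).

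For $0<u<1$ I would further fold $[\pi/2,\pi]$ onto $[0,\pi/2]$ via $\varphi\mapsto\pi-\varphi$. Because $k$ is odd this picks up the sign $\cos k(\pi-\varphi)=-\cos k\varphi$ while sending $1-2s\cos\varphi+s^2$ to $1+2s\cos\varphi+s^2$, so one is left with a \emph{difference} of two radial integrals. Writing $a=\sqrt{1/u^2-1}$, $b=\cos\varphi$ and $c=\sqrt{a^2+b^2}$ as in \eqref{Scherbakov Symphony no.5}, the positive roots of $1-2bs+s^2=1/u^2$ and of $1+2bs+s^2=1/u^2$ are exactly $b+c$ and $c-b$ (using $a>0$), so after the fold the bracket is $\int_\delta^{b+c}(1-2bs+s^2)^{-\sigma}\,\frac{ds}{s}-\int_\delta^{c-b}(1+2bs+s^2)^{-\sigma}\,\frac{ds}{s}$. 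Splitting the first integral at $c-b$ and combining the two parts over $(\delta,c-b)$ produces the integrand $\bigl((1-2bs+s^2)^{-\sigma}-(1+2bs+s^2)^{-\sigma}\bigr)/s=O(bs)/s$, which stays bounded near $s=0$; hence the $\delta\searrow0$ limit is the (absolutely convergent) integral $H^0$, while the remaining piece over $(c-b,c+b)$ is $H^1$, see \eqref{Scherbakov Symphony no.5}. Passing the $\delta$-limit under the $\varphi$-integral by dominated convergence (here $\sigma<1$ guarantees an integrable majorant) yields \eqref{Utushka lugovaya} with $G_\sigma$ given by \eqref{Kalinnikov Symphony no.1}.

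For $u>1$ one has $1-2s\cos\varphi+s^2\ge1>1/u^2$ near $s=0$ and for every $\varphi$ with $\cos\varphi\le0$, so the region misses a neighbourhood of the origin (no principal value is needed) and is nonempty only for $\varphi\in(0,\pi/2)$ with $\cos\varphi>\sqrt{1-1/u^2}$, the radial variable then running over $(s_-(\varphi),s_+(\varphi))$, $s_\pm(\varphi)=\cos\varphi\pm\sqrt{\cos^2\varphi-1+1/u^2}$. On that interval I would substitute $t=1-2s\cos\varphi+s^2$ on the two monotone branches $s\lessgtr\cos\varphi$, where $t$ sweeps out $(\sin^2\varphi,1/u^2)$, $s=\cos\varphi\pm\sqrt{t-\sin^2\varphi}$, and $ds=\pm\,dt/\bigl(2\sqrt{t-\sin^2\varphi}\bigr)$; adding the two branches merges the factor $1/s$ into $\frac{2\cos\varphi}{1-t}$, so that
\[
\int_{s_-(\varphi)}^{s_+(\varphi)}\frac{(1-2s\cos\varphi+s^2)^{-\sigma}}{s}\,ds=\int_{\sin^2\varphi}^{1/u^2}\frac{\cos\varphi\;t^{-\sigma}}{(1-t)\sqrt{t-\sin^2\varphi}}\,dt .
\]
Now all integrals converge absolutely (again using $\sigma<1$), so Fubini allows integrating in $t\in(0,1/u^2)$ first and then in $\varphi\in(0,\arcsin\sqrt t)$; the substitution $\sin\varphi=\sqrt t\sin\psi$ gives $\cos\varphi\,d\varphi=\sqrt t\cos\psi\,d\psi$, $\sqrt{t-\sin^2\varphi}=\sqrt t\cos\psi$ and $\varphi=\arcsin(\sqrt t\sin\psi)$, reducing the inner integral to $\int_0^{\pi/2}\cos\bigl(k\arcsin(\sqrt t\sin\psi)\bigr)\,d\psi$. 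Restoring the order of integration and replacing $\sin\psi$ by $\cos\psi$ via $\psi\mapsto\pi/2-\psi$ gives precisely \eqref{Braun HC5030}, hence \eqref{Utushka lugovaya} in this case as well.

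I expect the main obstacle to lie in the range $0<u<1$: one must identify the $\varphi$-dependent domain of integration correctly, notice that symmetrising the $\varphi$-integral collapses the $s^{-1}$ singularity \emph{only} because $k$ is odd (for even $k$ the two radial integrals would \emph{add}, and no cancellation would occur — which is exactly why \eqref{Scherbakov Symphony no.5}--\eqref{Kalinnikov Symphony no.1} are tailored to odd $k$), and then push the principal-value limit $\delta\searrow0$ through the outer integral, which is where $\sigma\in(0,1)$, i.e. $1<p<\infty$, enters. The computation for $u>1$ is more mechanical, but requires care with the two branches of $t=1-2s\cos\varphi+s^2$ so that the $1/s$ factors recombine into $\frac{2\cos\varphi}{1-t}$.
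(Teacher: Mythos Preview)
Your argument is correct and follows essentially the same route as the paper: polar coordinates with the rescaling $s=r/u$, the evenness reduction to $2\int_0^\pi$, the odd-$k$ fold $\varphi\mapsto\pi-\varphi$ on $[\pi/2,\pi]$ for $0<u<1$, and for $u>1$ a chain of substitutions that is equivalent to the paper's (the paper writes $t=s-\cos\varphi$, then $x=\sin\varphi$, then passes to polar coordinates on the resulting quarter-disk, whereas you substitute $t=1-2s\cos\varphi+s^2$ directly and then $\sin\varphi=\sqrt{t}\sin\psi$; the two chains yield the same double integral). The only organisational difference is that for $0<u<1$ the paper first removes the principal value by the standard Calder\'on--Zygmund ``subtract the constant'' device (their formula (\ref{Crasborn})) and only then folds, while you fold first and pass the $\delta$-limit afterwards; your justification ``$\sigma<1$ gives an integrable majorant'' is correct but would in a full write-up deserve a line of estimation near $\varphi=0$ (where $H_\sigma(a,\cos\varphi)\lesssim 1+(\sin\varphi)^{1-2\sigma}$, integrable for $\sigma<1$).
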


\begin{proof}
We first simplify the formula \eqref{Fade away} in a manner that is standard for Calder\'on--Zygmund singular integral
operators, see
\cite[Section II.3.4]{S70}.
That is, we use (i) pointwise bound, (ii) gradient bound, and (iii) cancellation property of the kernel \eqref{eq: Opel Kadett} to obtain:\\
\vskip 10pt
\ \hskip -25pt
$
{\displaystyle
(\cR_{-k}\Phi_\varepsilon)(u)
=
\left\{
\begin{array}{c}
\ \\
\ \\
\ \\
\ \\
\ \\
\ \\
\ \\
\ \\
\ \\
\end{array}
\right.
}
$
\vskip - 131pt
\begin{align}
\label{Crasborn}
&\ \hskip 50pt
\renewcommand{\arraystretch}{2.6}
\begin{tabular}{@{}l@{}}
${\displaystyle \int_{\{0<|w|<\delta_u\}}
\frac{(w/|w|)^{k}}{|w|}\cdot\frac{|u-w|^{-2\sigma}-|u|^{-2\sigma}}{|w|}
\,dA(w)}$ \\
${\displaystyle \hskip 20pt+
\int_{\{|w|>\delta_u\}\cap\{|w-u|<1\}}\frac{(w/|w|)^{k}}{|w|^2}|u-w|^{-2\sigma}\,dA(w)}$
\end{tabular}
\renewcommand{\arraystretch}{1}
\
&  & \text{ if }0<u<1,
\\
& & & \nonumber \\
\label{Odresitev}
&\ \hskip 50pt {\displaystyle \int_{\{|w-u|<1\}}\frac{(w/|w|)^{k}}{|w|^2}|u-w|^{-2\sigma}\,dA(w)}
&   & \text{ if }u>1.
\end{align}

In \eqref{Crasborn} one can take $\delta_u$ to be a half of the distance from $u$ to the set of discontinuities of $\Phi_\varepsilon$, which is the union of the unit circle and the origin. That is, $\delta_u=(1/2)\min\{u,|u-1|\}$. Yet $\delta_u$ will soon disappear anyway, as we develop more explicit ways to express \eqref{Crasborn} and \eqref{Odresitev}. Let us specify this next.

\smallskip
{\bf First we assume that \framebox{$0<u<1$}.}
For any $\varphi\in[0,2\pi)$, the ray $\mn{re^{i\varphi}}{r>0}$ hits the circle $\{w\in\mathbb{C}; |w-u|=1\}$, centered at $u$ and of radius $1$, at $r=u\cos\varphi+\sqrt{1-(u\sin\varphi)^2}$. Therefore \eqref{Crasborn} gives,
after passing to polar coordinates $(r,\varphi)$ and setting $s=r/u$,
\begin{equation}
\label{Nick Anderson}
\aligned
(\cR_{-k}\Phi_\varepsilon)(u)
&=
u^{-2\sigma}
\int_0^{2\pi}
e^{ik\varphi}
\int_0^{\delta_u/u}
\frac{|1-se^{i\varphi}|^{-2\sigma}-1}{s}
\,ds\,d\varphi\\
&\hskip 20pt+
u^{-2\sigma}
\int_0^{2\pi}
e^{ik\varphi}\int_{\delta_u/u}^{\cos\varphi+\sqrt{1/u^2-\sin^2\varphi}}
\frac{|1-se^{i\varphi}|^{-2\sigma}}{s}\,ds\,d\varphi.
\endaligned
\end{equation}
Because
$
|1-se^{i\varphi}|^{2}=1-2s\cos\varphi+s^2
$ only depends on $\cos\varphi$,
the inner integrands are $2\pi$-periodic and even with respect to $\varphi$, which means that instead of $\int_0^{2\pi}e^{i k \varphi}\hdots$ we may write $2\int_0^{\pi}\cos(k \varphi)\hdots$ in both summands from \eqref{Nick Anderson} above.
Note the oscillatory nature of these integrals, due to the presence of $\cos k\varphi$.

Using the change of variable $\psi=\pi-\varphi$ in $\int_{\pi/2}^\pi$ gives, for $k$ that is {\it odd} and an integrable $Z$,
\begin{equation}
\label{3AM}
\int_0^\pi Z(\cos\varphi)\cos k\varphi\, \,d\varphi
= \int_0^{\pi/2} [Z(\cos\varphi)-Z(-\cos\varphi)]\cos k\varphi\, \,d\varphi.
\end{equation}

We apply the formula \eqref{3AM} to both integrals from \eqref{Nick Anderson}, since this will give us
positivity of the first integrand there, along side with
cancellation that will eliminate $\delta_u/u$ as a limit point of some of the integrals, as shown next:
\begin{equation*}
\label{Mahalia - In the Upper Room}
\aligned
\frac{(\cR_{-k}\Phi_\varepsilon)(u)}{2u^{-2\sigma}}
& =
\int_0^{\pi/2}
\cos k\varphi
\int_0^{\delta_u/u}
\frac{
|1-se^{i\varphi}|^{-2\sigma}
-
|1+se^{i\varphi}|^{-2\sigma}}{s}
\,ds
\,d\varphi\\
&\hskip 15pt
+
\int_0^{\pi/2}
\cos k\varphi
\bigg(
\int_{\delta_u/u}^{\sqrt{1/u^2-\sin^2\varphi}+\cos\varphi}
\frac{|1-se^{i\varphi}|^{-2\sigma}}{s}\,ds
\\
&\hskip 79.5pt
-\int_{\delta_u/u}^{\sqrt{1/u^2-\sin^2\varphi}-\cos\varphi}
\frac{|1+se^{i\varphi}|^{-2\sigma}}{s}\,ds
\bigg)\,d\varphi.
\endaligned
\end{equation*}
Together with
\eqref{Scherbakov Symphony no.5}
this proves \eqref{Kalinnikov Symphony no.1}.

\smallskip
{\bf Now assume that} \framebox{$u>1$}.
Recall the formula \eqref{Odresitev}.
If $u>1$ then the ray $\mn{re^{i\varphi}}{r>0}$ hits the circle $\{w\in\mathbb{C}; |w-u|=1\}$, centered at $u$ and of radius $1$, only for $|\varphi|\leq \arcsin 1/u$, and that happens at $r=u\cos\varphi\pm\sqrt{1-(u\sin\varphi)^2}$. Therefore, by adapting the steps from the calculation in the previous case ($0<u<1$), we continue as
$$
\aligned
(\cR_{-k}\Phi_\varepsilon)(u)
&=\int_{-\arcsin 1/u}^{\arcsin 1/u}
e^{ik\varphi}\int_{u\cos\varphi-\sqrt{1-u^2\sin^2\varphi}}^{u\cos\varphi+\sqrt{1-u^2\sin^2\varphi}}
\frac{dr}{r|u-re^{i\varphi}|^{2\sigma}}\,d\varphi\\
&
=
2u^{-2\sigma}
\int_{0}^{\arcsin 1/u}
\cos k\varphi
\int_{-\sqrt{1/u^2-\sin^2\varphi}}^{\sqrt{1/u^2-\sin^2\varphi}}
\frac{dt}{(t+\cos\varphi)(t^2+\sin^2\varphi)^{\sigma}}\,d\varphi
\\
&=
4u^{-2\sigma}
\int_{0}^{1/u}
\int_{0}^{\arcsin\sqrt{1/u^2-t^2}}
\frac{\cos\varphi\cos k\varphi
}{(1-(t^2+\sin^2\varphi))(t^2+\sin^2\varphi)^{\sigma}}\,d\varphi\,dt\\
&=
4u^{-2\sigma}
\int_{0}^{1/u}
\int_{0}^{\sqrt{1/u^2-t^2}}
\frac{\cos(k\arcsin x)
}{(1-(t^2+x^2))(t^2+x^2)^{\sigma}}\,dx\,dt\\
&=
4u^{-2\sigma}
\iint_{B_{\mathbb{R}^2}^{++}(0,1/u)}
\frac{\cos(k\arcsin x)
}{(1-(x^2+y^2))(x^2+y^2)^{\sigma}}\,dx\,dy\\
&=
2u^{-2\sigma}
\int_0^{\pi/2}
\int_0^{1/u^2}
\cos(k\arcsin(\sqrt{s}\cos\psi))\,
\frac{ds}{(1-s)s^{\sigma}}\,d\psi.
\endaligned
$$
Here $B_{\mathbb{R}^2}^{++}(0,1/u)$ is the ball in $\mathbb{R}^2$, centered at the origin and of radius $1/u$, intersected with the first quadrant in $\mathbb{R}^2$.
\end{proof}

\subsection{Summary so far}
By means of
Proposition~\ref{Markovina Velikic}
we reformulate our main goal, most recently expressed as \eqref{Kronos}.
We prove the following two results which
through
\eqref{Utushka lugovaya}, \eqref{Kalinnikov Symphony no.1} and \eqref{Braun HC5030}
together
combine into \eqref{Kronos}:
\begin{theorem}
\label{metatarsus dex}
Denote $\sigma=1/p-\varepsilon$ as in \eqref{19vodki}.
For odd $k\in\mathbb{N}$, $k\geq3$, and $p\geq3$,
$$
\lim
_{\varepsilon\searrow0}
\left(
\int_1^\infty |G_{k,\sigma}(u)|^p\cdot \varepsilon u^{-1+2\varepsilon p}\,du
\right)^{1/p}
=0.
$$
\end{theorem}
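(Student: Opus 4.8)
The plan is to establish the stronger, $\varepsilon$-free bound
\[
\int_1^\infty |G_{k,\sigma}(u)|^p\,u^{-1+2\varepsilon p}\,du\ \le\ M_{k,p}\qquad\text{for all }\varepsilon\in(0,\tfrac1{2p}),
\]
with $\sigma=1/p-\varepsilon$ and $M_{k,p}$ independent of $\varepsilon$; multiplying by $\varepsilon$ and taking $p$-th roots then forces the limit in Theorem~\ref{metatarsus dex} to be $0$. I would split the $u$-integral at $u=2$; the tail $u\ge2$ is soft, while the range $1<u<2$ carries all the difficulty.

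For $u\ge2$ I would return to the unsimplified representation \eqref{Odresitev}. Since $u>1$, the disc $\{|w-u|<1\}$ misses the origin and $|w|\ge u-1\ge u/2$ on it, so estimating the kernel of $\cR_{-k}$ pointwise gives $|(\cR_{-k}\Phi_\varepsilon)(u)|\le\pi(1-\sigma)^{-1}(u-1)^{-2}$; with \eqref{Utushka lugovaya} and $1-\sigma\ge1-1/p$ this yields $|G_{k,\sigma}(u)|\lesssim_p u^{2/p-2}$ on $[2,\infty)$. Because $p\ge3$, the resulting exponent $(2/p-2)p+(-1+2\varepsilon p)=1-2p+2\varepsilon p$ is $<-1$ once $\varepsilon$ is small, so $\int_2^\infty$ is bounded uniformly in $\varepsilon$.

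The core of the argument is $1<u<2$, where I would use the simplified formula \eqref{Braun HC5030} and crucially exploit that $k$ is \emph{odd}: for such $k$ and all $x\in[-1,1]$ one has the identity $\cos(k\arcsin x)=(-1)^{(k-1)/2}\sin(k\arccos x)$, hence $|\cos(k\arcsin x)|\le k\sqrt{1-x^2}$. Inserting $x=\sqrt s\cos\psi$ and using $\sqrt{1-s\cos^2\psi}\le\sqrt{1-s}+\sin\psi$, the inner $s$-integral in \eqref{Braun HC5030} is dominated, for each fixed $\psi$, by $k\int_0^{1/u^2}\frac{ds}{\sqrt{1-s}\,s^\sigma}+k\sin\psi\int_0^{1/u^2}\frac{ds}{(1-s)s^\sigma}$. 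The first term is a Beta integral, bounded by $B(\tfrac12,1-\sigma)$ and hence $\lesssim_p1$ uniformly in $u$ and $\varepsilon$; in the second, splitting at $s=\tfrac12$ and using $1-1/u^2\ge(u-1)/4$ on $(1,2)$ gives $\int_0^{1/u^2}\frac{ds}{(1-s)s^\sigma}\lesssim_p1+\log\frac1{u-1}$. Integrating in $\psi\in(0,\pi/2)$ (with $\int_0^{\pi/2}\sin\psi\,d\psi=1$), \eqref{Braun HC5030} then yields $|G_{k,\sigma}(u)|\lesssim_{k,p}1+\log\frac1{u-1}$ on $(1,2)$, uniformly in $\varepsilon$. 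Since $(1+\log\frac1{u-1})^p$ is integrable on $(1,2)$ and $u^{-1+2\varepsilon p}\le2$ there, the contribution of $1<u<2$ is $\lesssim_{k,p}1$, which completes the proof.

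The only genuinely delicate step is this logarithmic bound near $u=1$: the kernel in \eqref{Odresitev} really does blow up as $u\downarrow1$, because the disc $\{|w-u|<1\}$ then absorbs the non-integrable singularity of $|w|^{-2}$ at the origin; it is precisely the vanishing of $\cos(k\arcsin x)$ at $x=\pm1$---i.e.\ the oddness of $k$---that furnishes the extra factor $\sqrt{1-s}$, taming the $(1-s)^{-1}$ singularity and reducing an otherwise fatal power blow-up in $u-1$ to a harmless logarithm. Everything else is routine.
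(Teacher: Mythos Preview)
Your argument is correct, but it is more elaborate than needed, and the closing paragraph misidentifies the mechanism.

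The paper's proof uses neither the oddness of $k$ nor a split at $u=2$. It simply bounds $|\cos(\cdot)|\le 1$ in \eqref{Braun HC5030} to obtain, for every $u>1$,
\[
|G_\sigma(u)|\ \le\ \frac{\pi}{2}\int_0^{1/u^2}\frac{ds}{(1-s)s^\sigma}\ \lesssim\ \frac{u^{-2(1-\sigma)}}{1-\sigma}+\log\frac{1}{1-1/u^{2}},
\]
the second step being Lemma~\ref{Barney Miller - Aces of Fashion}. The first term contributes $\varepsilon\int_1^\infty u^{1-2p}\,du\to 0$; for the second the substitution $x=u^{2\varepsilon p}$ together with the auxiliary asymptotic $\int_1^\infty\big(\log\frac{1}{1-x^{-M}}\big)^p\,dx\sim 1/M$ (Lemma~\ref{sticky}) finishes. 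This is shorter, and the resulting constants are independent of $k$.

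Your belief that oddness is what ``reduces an otherwise fatal power blow-up in $u-1$ to a harmless logarithm'' is not right. The integral $\int_0^{1/u^2}\frac{ds}{(1-s)s^\sigma}$, which is exactly what the crude bound $|\cos|\le 1$ produces, is \emph{already} only logarithmic in $u-1$; your own estimate of the $\sin\psi$-term shows this. The Chebyshev-type bound $|\cos(k\arcsin x)|\le k\sqrt{1-x^2}$ merely upgrades one of two terms from a logarithm to a constant at the price of a factor $k$, while the other term keeps the logarithm, so nothing is gained. The derivation of \eqref{Braun HC5030} (valid for all $k$, not just odd) already encodes enough cancellation of the kernel $(w/|w|)^k/|w|^2$ to beat the crude $(u-1)^{-2}$ bound coming from \eqref{Odresitev}. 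All the genuine oscillatory work in this first proof is reserved for the range $0<u<1$, i.e., for Theorem~\ref{gastrocnemius}.
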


\begin{theorem}
\label{gastrocnemius}
Denote $\sigma=1/p-\varepsilon$ as in \eqref{19vodki}.
For odd $k\in\mathbb{N}$, $k\geq3$, and $p\geq3$,
$$
\limsup_{\varepsilon\searrow0}
\left(
\int_0^1 |G_{k,\sigma}(u)|^p\cdot \varepsilon u^{-1+2\varepsilon p}\,du
\right)^{1/p}
\leqsim\frac{1}{k^{2(1-1/p)}(p-1)}.
$$
\end{theorem}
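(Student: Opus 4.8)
The plan is to reduce the claim to a pointwise bound on $G_{k,\sigma}$ near the origin, and then to extract that bound from the oscillatory integral \eqref{Kalinnikov Symphony no.1} by integrating by parts. Write $d\mu_\varepsilon(u):=\varepsilon\,u^{-1+2\varepsilon p}\,du$. Since $\mu_\varepsilon\big((0,1)\big)=\tfrac1{2p}$ whereas $\mu_\varepsilon\big((\delta,1)\big)\to0$ as $\varepsilon\searrow0$ for each fixed $\delta\in(0,1)$, the measure $\mu_\varepsilon$ escapes to $u=0$. Fixing a small absolute $u_0$, say $u_0=\tfrac1{10}$, we split $(0,1)=(0,u_0]\cup(u_0,1)$. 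On $(u_0,1)$ the only singularity of $G_{k,\sigma}$ is the logarithmic one at $u=1$, coming from the term $H^1$ of \eqref{Scherbakov Symphony no.5} — the usual boundary behaviour of a Calder\'on--Zygmund operator applied to $\Phi_\varepsilon=|z|^{-2\sigma}\chi_{D}$ — so there $|G_{k,\sigma}(u)|$ is at most a $p$-dependent multiple of $1+\log\tfrac1{1-u}$; combined with $u^{-1+2\varepsilon p}\le u_0^{-1}$ on $(u_0,1)$ for small $\varepsilon$, this makes $\int_{u_0}^{1}|G_{k,\sigma}|^p\,d\mu_\varepsilon$ a $(k,p)$-dependent multiple of $\varepsilon$, hence $\to0$. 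So Theorem~\ref{gastrocnemius} reduces to
\[
|G_{k,\sigma}(u)|\ \lesssim\ \frac{1}{k^{2(1-1/p)}(p-1)}\,,\qquad u\in(0,u_0]\,,
\]
uniformly for $\sigma=1/p-\varepsilon$ with $\varepsilon$ small: indeed this yields $\int_0^{u_0}|G_{k,\sigma}|^p\,d\mu_\varepsilon\le\tfrac1{2p}C^p\big(k^{2(1-1/p)}(p-1)\big)^{-p}$, and one takes $p$-th roots, using $(2p)^{-1/p}\le1$.

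To prove this pointwise estimate I would use the oscillation twice. Since $1-2bs+s^2=|1-se^{i\varphi}|^2$ depends only on $b=\cos\varphi$, and since $H_\sigma(a,0)=0$ (both $H^0$ and $H^1$ vanish for $b=0$), one integration by parts in $\varphi$ via $\cos k\varphi=\tfrac1k\partial_\varphi\sin k\varphi$ annihilates both boundary terms (at $\varphi=0$ because $\sin 0=0$, at $\varphi=\tfrac\pi2$ because $H_\sigma(a,0)=0$), giving, with $a:=\sqrt{1/u^2-1}$,
\[
G_{k,\sigma}(u)=\frac1k\int_0^{\pi/2}\sin\varphi\,(\partial_bH_\sigma)(a,\cos\varphi)\,\sin k\varphi\,d\varphi=:\frac1k\int_0^{\pi/2}\Theta(\varphi)\,\sin k\varphi\,d\varphi\,.
\]
This produces the first factor $k^{-1}$; the remaining gain $k^{2\sigma-1}\ (\le k^{2/p-1})$ together with the essential weight $(p-1)^{-1}$ must come from the corner that $\Theta$ has at $\varphi=0$.

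The analytic core is the behaviour of $(\partial_bH_\sigma)(a,b)$ as $b\to1$. Differentiating \eqref{Scherbakov Symphony no.5} under the integral sign and using $1-2bs+s^2=(s-b)^2+(1-b^2)$, one sees that for $u\le u_0$ — so that $a$ is large and in particular the minimiser $s=b$ of $1-2bs+s^2$ lies well below the cutoff $c-b$, which is where the standing reduction \eqref{kp3} is used — the dominant term is $2\sigma\int_{\mathbb{R}}\big((s-b)^2+(1-b^2)\big)^{-\sigma-1}ds$, so that, as $b\to1$,
\[
(\partial_bH_\sigma)(a,b)=\frac{2\sqrt\pi\,\Gamma(\sigma+\tfrac12)}{\Gamma(\sigma)}\,(1-b^2)^{-\sigma-\frac12}+O\!\Big(\tfrac1{p-1}\Big)\,,
\]
the leading coefficient $2\sigma\int_{\mathbb{R}}(t^2+1)^{-\sigma-1}\,dt=\tfrac{2\sqrt\pi\,\Gamma(\sigma+1/2)}{\Gamma(\sigma)}$ having size $\sim\sigma\sim(p-1)^{-1}$ for $p\ge3$. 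Hence near $\varphi=0$ one may write $\Theta=\Theta_{\mathrm{sing}}+\Theta_{\mathrm{reg}}$ with $\Theta_{\mathrm{sing}}(\varphi)=c_\sigma\varphi^{-2\sigma}\chi_{(0,1)}(\varphi)$, $|c_\sigma|\lesssim(p-1)^{-1}$, and $\Theta_{\mathrm{reg}}$ of bounded variation on $(0,\tfrac\pi2)$ with $\|\Theta_{\mathrm{reg}}\|_\infty$ and its total variation both $\lesssim(p-1)^{-1}$ and $\Theta_{\mathrm{reg}}(0^+)=0$, uniformly in the relevant ranges of $a$ and $\sigma$ (the $\varphi^{-2\sigma-1}$ derivative singularity is entirely in $\Theta_{\mathrm{sing}}$). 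Then
\[
\int_0^{\pi/2}\Theta\,\sin k\varphi\,d\varphi=c_\sigma\int_0^{1}\varphi^{-2\sigma}\sin k\varphi\,d\varphi+\int_0^{\pi/2}\Theta_{\mathrm{reg}}(\varphi)\,\sin k\varphi\,d\varphi\,;
\]
the first piece is evaluated via $\int_0^{\infty}\varphi^{-2\sigma}\sin k\varphi\,d\varphi=\Gamma(1-2\sigma)\sin\!\big(\tfrac\pi2(1-2\sigma)\big)k^{2\sigma-1}$ and the trivial tail bound $\big|\!\int_1^{\infty}\!\big|\lesssim k^{-1}$, so it is $\lesssim|c_\sigma|\,k^{2\sigma-1}\lesssim k^{2\sigma-1}/(p-1)$, while the second, after a further integration by parts in $\varphi$, is $\lesssim\big(\|\Theta_{\mathrm{reg}}\|_\infty+\mathrm{Var}\,\Theta_{\mathrm{reg}}\big)/k\lesssim 1/(k(p-1))\le k^{2\sigma-1}/(p-1)$. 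Altogether $|G_{k,\sigma}(u)|\lesssim k^{2\sigma-2}/(p-1)\le k^{-2(1-1/p)}/(p-1)$, which is the desired pointwise bound.

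The main obstacle is exactly this uniform asymptotic analysis of $(\partial_bH_\sigma)(a,b)$: one has to pin the leading $b\to1$ singularity to the precise coefficient of size $\sim(p-1)^{-1}$ — the factor $\sigma$, equivalently $\Gamma(\sigma+\tfrac12)/\Gamma(\sigma)$, is what separates the sharp $k^{-2(1-1/p)}(p-1)^{-1}$ from the cruder $k^{-2(1-1/p)}$ produced by a trivial estimate of $\partial_bH_\sigma$, and it is the ultimate source of the $(p-1)$ in Conjecture~\ref{Konj} — while simultaneously keeping $\Theta_{\mathrm{reg}}$ under control in variation, uniformly in $a$ and $\sigma$. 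As a consistency check and an alternative route, one may observe that $G_{k,\sigma}(u)\to\tfrac12\mu_{k,\sigma}$ as $u\to0$, where $\mu_{k,\sigma}$ is the eigenvalue of $\cR_{-k}$ on the homogeneous function $|z|^{-2\sigma}$ — indeed $\cR_{-k}(|z|^{-2\sigma})=\mu_{k,\sigma}|z|^{-2\sigma}$ by dilation invariance and the single angular frequency, while $\cR_{-k}\big(|z|^{-2\sigma}\chi_{D^c}\big)$ is continuous at $0$ and vanishes there — and then compute $\mu_{k,\sigma}$ by the Fourier transform through Bochner's relation: this gives $|\mu_{k,\sigma}|\sim\tfrac1k\cdot\tfrac{\Gamma(1-\sigma)}{\Gamma(\sigma)}\cdot\tfrac{\Gamma(\sigma+k/2)}{\Gamma(1-\sigma+k/2)}$, and since $\tfrac{\Gamma(1-\sigma)}{\Gamma(\sigma)}\sim\sigma\sim(p-1)^{-1}$ and, by log-convexity of $\Gamma$, $\tfrac{\Gamma(\sigma+k/2)}{\Gamma(1-\sigma+k/2)}\le(k/2)^{2\sigma-1}$, one recovers the claimed bound — and the shape of Conjecture~\ref{Konj} — at once. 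That route, however, steps outside the purely spatial framework of this section, which is why the integration-by-parts argument is the one I would carry out in full.
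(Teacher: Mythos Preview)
Your overall strategy---integrate by parts once to gain $1/k$, then isolate the $\varphi^{-2\sigma}$ singularity of $\Theta$ at $\varphi=0$ as the source of the remaining $k^{2\sigma-1}$ and of the coefficient $\sigma\sim(p-1)^{-1}$---is the right mechanism and is close in spirit to what the paper does (the paper's interval-pairing together with Lagrange's mean value theorem is a discrete version of your integration by parts). But the reduction you propose, to a \emph{uniform} pointwise bound $|G_{k,\sigma}(u)|\lesssim k^{-2(1-1/p)}(p-1)^{-1}$ on a fixed interval $(0,u_0]$, is too strong and in fact fails for large $p$. Concretely, $\partial_b H_\sigma$ is not just $V$ plus smooth remainder: in the paper's notation $-\partial_b H_\sigma^0=U-V$, and the piece
\[
U(a,b)=\frac{\Upsilon(a,b)^{-\sigma}-\Upsilon(a,0)^{-\sigma}}{\sqrt{a^2+b^2}}
\]
satisfies $U(a,b)\lesssim u^2(1-b^2)^{-\sigma}$ (Proposition~\ref{Confutatis}), so $\sin\varphi\cdot U$ contributes to $\Theta_{\mathrm{reg}}$ a term of size and variation $\sim u^2$, carrying \emph{no} factor of $\sigma$. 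For $u=u_0=1/10$ and $p$ large this is not $\lesssim(p-1)^{-1}$, so the claimed bound on $\|\Theta_{\mathrm{reg}}\|_\infty+\mathrm{Var}\,\Theta_{\mathrm{reg}}$ is false, and with it the pointwise bound on $G_{k,\sigma}$. The paper does \emph{not} prove a uniform pointwise estimate; the $\mathcal U$-part is shown only to satisfy $|\mathcal U(u)|\lesssim u^2$, and it is the concentration of $\mu_\varepsilon$ at $u=0$ that makes $\|\mathcal U\|_{L^p(\mu_\varepsilon)}\to0$.

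The second issue is subtler. Even after separating off the $U$-piece, your scheme needs a \emph{second} integration by parts on the $V$-remainder, i.e.\ control of $\mathrm{Var}\big(\sin\varphi\, V(a,\cos\varphi)-c_\sigma\varphi^{-2\sigma}\big)$ by $C\sigma$ uniformly in $a$ and $\sigma$; this amounts to estimating $\partial_b^2 H_\sigma^0$ near $b=1$ with the precise cancellation of the $\varphi^{-2\sigma-1}$ singularities, and it is not clear this goes through cleanly. The paper avoids any such second derivative: after the one ``integration by parts'' it decomposes $G^0_\sigma=\mathcal U-\mathcal V+\mathcal W$ and, for $\mathcal V$, singles out the $m=1$ term (which already has the sharp bound $\sigma k^{-2(1-\sigma)}$) and controls the remaining alternating sum by a monotonicity lemma (Proposition~\ref{Sekigahara}) that collapses it to a single term---valid only for $u$ below some $\nu(k,p)$, the residual interval $(\nu,1)$ being again absorbed by $\mu_\varepsilon((\nu,1))\to0$. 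So the paper's proof is genuinely not a pure pointwise argument, and the structural split $\mathcal U$ versus $\mathcal V$ is doing real work that your singular/regular split in $\varphi$ alone does not capture.
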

\noindent
This is now our objective.
We will prove Theorems \ref{metatarsus dex}, \ref{gastrocnemius} in Sections  \ref{ke se prestoram} and \ref{Zemi ogin}, respectively.

\subsection{Integral asymptotics}
We will need the following two auxiliary results.

\begin{lemma}
\label{Barney Miller - Aces of Fashion}
Uniformly in $\beta,\eta\in(0,1)$ we have
\begin{equation}
\label{Bach Musette BWV Anh 126}
\int_0^{\eta}\frac{dv}{(1-v)v^\beta}
\,\sim\,
\frac{\eta^{1-\beta}}{1-\beta}+\log\frac1{1-\eta}.
\end{equation}
\end{lemma}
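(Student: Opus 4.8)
The plan is to prove the two-sided bound in \eqref{Bach Musette BWV Anh 126} by splitting the integral at the midpoint of the interval, say at $v=1/2$, treating the contributions of the singularity at $v=0$ (where $v^{-\beta}$ matters) and the singularity at $v=1$ (where $(1-v)^{-1}$ matters) separately, and then showing that each piece is comparable to the corresponding term on the right-hand side, with the comparison constants independent of $\beta$ and $\eta$. It is convenient to consider the two cases $\eta\le 1/2$ and $\eta>1/2$ separately, since in the former case the factor $(1-v)^{-1}$ is harmless (it lies in $[1,2]$ on $[0,\eta]$) and in the latter case one genuinely has to extract the logarithmic divergence near $v=1$.

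First I would treat the case $0<\eta\le 1/2$. Here $1\le (1-v)^{-1}\le 2$ for all $v\in[0,\eta]$, so
\[
\int_0^\eta \frac{dv}{v^\beta}\,\le\,\int_0^\eta\frac{dv}{(1-v)v^\beta}\,\le\,2\int_0^\eta\frac{dv}{v^\beta},
\]
and the remaining integral is exactly $\eta^{1-\beta}/(1-\beta)$. On the other hand, for $\eta\le 1/2$ one has $\log\frac{1}{1-\eta}\le\log 2$, while $\eta^{1-\beta}/(1-\beta)\ge \eta^{1-\beta}\ge \eta \ge$ a fixed multiple of $\log\frac{1}{1-\eta}$ is false in general, so one instead observes $\log\frac{1}{1-\eta}\sim \eta$ on $[0,1/2]$ and $\eta^{1-\beta}/(1-\beta)\geqsim \eta$ uniformly in $\beta\in(0,1)$ (since $t^{1-\beta}/(1-\beta)$ is increasing in... more carefully: $\eta^{1-\beta}/(1-\beta)\ge \eta^{1-\beta}\ge \eta$ because $\eta\le 1$ and $1-\beta\le 1$). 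Hence on $[0,1/2]$ the right-hand side of \eqref{Bach Musette BWV Anh 126} is itself comparable to $\eta^{1-\beta}/(1-\beta)$, and we are done in this case. For the case $1/2<\eta<1$ I would write
\[
\int_0^\eta\frac{dv}{(1-v)v^\beta}=\int_0^{1/2}\frac{dv}{(1-v)v^\beta}+\int_{1/2}^\eta\frac{dv}{(1-v)v^\beta}.
\]
By the first case the first summand is $\sim 2^{\beta-1}/(1-\beta)\sim 1/(1-\beta)$, which is in turn comparable to $\eta^{1-\beta}/(1-\beta)$ since $1/2<\eta<1$ forces $\eta^{1-\beta}\in[1/2,1]$. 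For the second summand, on $[1/2,\eta]$ we have $1\le v^{-\beta}\le 2^\beta\le 2$, so it is comparable to $\int_{1/2}^\eta (1-v)^{-1}\,dv=\log\frac{1}{1-\eta}-\log 2$; and since we already have a term of size $\sim 1/(1-\beta)\geqsim 1$ available, the additive constant $-\log 2$ can be absorbed. Collecting the pieces yields $\int_0^\eta\frac{dv}{(1-v)v^\beta}\sim \eta^{1-\beta}/(1-\beta)+\log\frac{1}{1-\eta}$ in this case too.

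The only mildly delicate point — and the step I would flag as the main obstacle — is ensuring that all comparisons are genuinely \emph{uniform} in both $\beta\in(0,1)$ and $\eta\in(0,1)$, including in the degenerate limits $\beta\to 1^-$ (where $\eta^{1-\beta}/(1-\beta)\to\infty$ like $\frac{1}{1-\beta}$) and $\eta\to 1^-$ (where the logarithm blows up) and $\beta\to 0^+$ (where the first term degenerates to $\eta$). In particular one must check that in the overlap regime $\eta$ near $1$ and $\beta$ near $1$ the two terms on the right do not conspire to undercount the integral; this is handled by the observation above that for $\eta>1/2$ the integral already dominates a fixed multiple of $1/(1-\beta)$ coming from the piece on $[0,1/2]$, which matches $\eta^{1-\beta}/(1-\beta)$ up to absolute constants, while the piece on $[1/2,\eta]$ supplies the logarithm. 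Everything else is a routine estimate of elementary integrals, so I would not belabour those computations.
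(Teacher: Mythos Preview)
Your proposal is correct and follows essentially the same approach as the paper: split into the cases $\eta\le 1/2$ and $\eta>1/2$, and in the latter case split the integration domain so that one piece captures the power term and the other the logarithm. The paper's proof is only a two-line sketch (``consider separately the cases $0<\eta<1/2$ and $1/2<\eta<1$; in the latter case split the integration domain into $(0,1/4)\cup(1/4,\eta)$; we leave the details to the reader''), and you have supplied precisely those details, the only cosmetic difference being your choice of the split point $1/2$ rather than $1/4$.
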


\begin{remark}
Note that this can be interpreted as a sort of a ``log-product rule for integrals''.
That is, \eqref{Bach Musette BWV Anh 126} can be rewritten as
$
\int fg\,\sim\,\int f+\int g
$
for $f(v)=(1-v)^{-1}$ and $g(v)=v^{-\beta}$, and $\int\varphi=\int_0^\eta\varphi(v)\,dv$,
which reminds one of the well-known elementary rule $\log(fg)=\log f+\log g$.
\end{remark}

\begin{proof}
Consider separately the cases $0<\eta<1/2$ and $1/2<\eta<1$. In the latter case split further, namely divide the integration domain into $(0,1/4)\cup(1/4,\eta)$. We leave the details to the reader.
\end{proof}

\begin{lemma}
\label{sticky}
Fix $p>0$.
For $M> 1/p$ define
\begin{equation}
\label{In expecto Game 2 Denver:Lakers}
I_p(M):=\int_1^\infty\left(\log\frac{1}{1-x^{-M}}\right)^pdx.
\end{equation}
Then the integrals $I_p(M)$ converge
and
$I_p(M)\sim 1/M$ as $M\rightarrow\infty$.
\end{lemma}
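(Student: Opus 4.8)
The plan is to split the integral $I_p(M)=\int_1^\infty (\log\tfrac{1}{1-x^{-M}})^p\,dx$ according to whether $x$ is close to $1$ (where the logarithm blows up) or large (where $x^{-M}$ is tiny and the logarithm decays). For convergence near $x=1$, note that as $x\searrow1$ we have $1-x^{-M}\sim M(x-1)$, so the integrand is comparable to $(\log\tfrac{1}{M(x-1)})^p$, which is integrable on, say, $(1,2)$ for any fixed $p>0$; this requires only $M>0$, not $M>1/p$. For convergence at infinity, for large $x$ we have $x^{-M}$ small, so $\log\tfrac{1}{1-x^{-M}}=-\log(1-x^{-M})\sim x^{-M}$, and $\int^\infty x^{-Mp}\,dx$ converges precisely when $Mp>1$, i.e.\ $M>1/p$; this is exactly the stated hypothesis, and it shows the integrals converge.

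For the asymptotic $I_p(M)\sim 1/M$, the most transparent route is the substitution $t=x^{-M}$, i.e.\ $x=t^{-1/M}$ and $dx=-\tfrac{1}{M}t^{-1/M-1}\,dt$, which turns the integral into
\begin{equation*}
I_p(M)=\frac{1}{M}\int_0^1 \left(\log\frac{1}{1-t}\right)^p t^{-1/M-1}\,dt.
\end{equation*}
Thus it suffices to show that $J(M):=\int_0^1 (\log\tfrac{1}{1-t})^p\, t^{-1/M-1}\,dt$ stays bounded above and below by absolute constants as $M\to\infty$. The lower bound is immediate by restricting to $t\in(1/2,1)$, where $t^{-1/M-1}\ge 1$ and $\int_{1/2}^1(\log\tfrac{1}{1-t})^p\,dt$ is a fixed positive number. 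For the upper bound, split $(0,1)$ into $(0,1/2)$ and $(1/2,1)$: on $(1/2,1)$ one has $t^{-1/M-1}\le 2^{1/M+1}\le 4$ (for $M\ge1$), so that piece is bounded by $4\int_{1/2}^1(\log\tfrac{1}{1-t})^p\,dt$; on $(0,1/2)$ one has $\log\tfrac{1}{1-t}\le \log 2$, so that piece is bounded by $(\log 2)^p\int_0^{1/2}t^{-1/M-1}\,dt$, and one must check this last integral does not blow up. But $\int_0^{1/2}t^{-1/M-1}\,dt$ actually diverges, so this naive split fails near $t=0$.

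The fix near $t=0$ is to keep the factor $(\log\tfrac{1}{1-t})^p\sim t^p$ there: more precisely, for $t\in(0,1/2)$ we have $\log\tfrac{1}{1-t}\le 2t$ (say), hence the integrand on $(0,1/2)$ is at most $2^p t^{p}\,t^{-1/M-1}=2^p t^{p-1/M-1}$, and $\int_0^{1/2}t^{p-1/M-1}\,dt$ converges uniformly for $M$ large since $p-1/M-1>-1$ once $1/M<p$, with value bounded by $\tfrac{1}{p-1/M}\le \tfrac{2}{p}$ for $M\ge 2/p$. Combining the two pieces gives $J(M)\lesssim_p 1$ uniformly in large $M$, and together with the lower bound this yields $J(M)\sim 1$, hence $I_p(M)\sim 1/M$. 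The main obstacle, and the only subtle point, is handling the endpoint $t=0$ (equivalently $x=\infty$): one must use the genuine decay $\log\tfrac{1}{1-t}\lesssim t$ rather than a crude bound, and then the hypothesis $M>1/p$ is exactly what keeps the resulting power integrable with a bound uniform in $M\to\infty$. Everything else is elementary and can be left to the reader, in the spirit of the preceding lemma's proof.
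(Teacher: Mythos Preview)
Your proof is correct and follows essentially the same strategy as the paper: make a substitution that pulls out the factor $1/M$, then show the remaining integral is bounded above and below by positive constants independent of $M$. The only difference is the choice of substitution. You set $t=x^{-M}$, obtaining $I_p(M)=\tfrac1M\int_0^1(\log\tfrac1{1-t})^p\,t^{-1/M-1}\,dt$, and then split $(0,1)=(0,1/2)\cup(1/2,1)$ to control the resulting integral. The paper instead sets $t=-\log(1-x^{-M})$, which yields $I_p(M)=\tfrac1M\int_0^\infty t^p e^{-t}(1-e^{-t})^{-1-1/M}\,dt=:J_p(1/M)/M$; since $\varepsilon\mapsto(1-e^{-t})^{-1-\varepsilon}$ is increasing, one gets $J_p(0)\le J_p(1/M)\le J_p(p/2)$ for $M\ge 2/p$ in one line, and it only remains to note both endpoints are finite and positive. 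The paper's route is marginally slicker (monotonicity replaces your two-region split), but the two arguments are minor variants of each other.
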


\begin{proof}
Substituting with
$t=-\log(1-x^{-M})$
in \eqref{In expecto Game 2 Denver:Lakers} gives
$I_p(M)=J_p(1/M)/M$ for
$$
J_p(\varepsilon)
:=\int_0^\infty t^pe^{-t}(1-e^{-t})^{-1-\varepsilon}dt,
\hskip 30pt
0\leq\varepsilon<p.
$$
Observing that the function  $\varepsilon\mapsto (1-e^{-t})^{-1-\varepsilon}$ is increasing for any $t>0$ gives the estimate
$J_p(0)\leq J_p(\varepsilon)
\leq J_p(p/2)$ for $\varepsilon\in[0,p/2]$. The proof will be over once we show that the integrals $J_p(0),J_p(p/2)$ in fact converge (i.e., are both finite and nonzero), which is not difficult.
\end{proof}

\section{Proof of Theorem~\ref{metatarsus dex} (integration over $u>1$)}
\label{ke se prestoram}

Recall that $G_\sigma\big|_{(1,\infty)}$ is given by \eqref{Braun HC5030}. Take $u>1$. 
By first estimating crudely $|\cos|\leq1$ in \eqref{Braun HC5030}, and then applying Lemma~\ref{Barney Miller - Aces of Fashion}, we get
$$
|G_\sigma(u)|
\,\leqsim\,
\int_0^{1/u^{2}}
\frac{ds}{(1-s)s^{\sigma}}
\,\leqsim\,
\frac{u^{-2(1-\sigma)}}{1-\sigma}
+\log\frac1{1-1/u^{2}}\,.
$$
Therefore, since $-2(1-\sigma)p-1+2\varepsilon p=1-2p$, we may proceed as
\begin{equation*}
\int_{1}^\infty \big|G_\sigma(u)|^p\,\varepsilon u^{-1+2\varepsilon p}du
\,\leqsim_p\,
\varepsilon
\int_{1}^\infty u^{1-2p}\,du
+
\varepsilon\int_{1}^\infty
\left(
{\log}
\frac{1}{1-1/u^2}
\right)^p
 u^{-1+2\varepsilon p}du.
\end{equation*}
The middle expression clearly tends to zero as $\varepsilon\rightarrow0$.
So does the last one;
to see this, introduce the new variable $x=u^{2\varepsilon p}$ and apply Lemma~\ref{sticky}.
\qed

\section{Proof of Theorem~\ref{gastrocnemius} (integration over $0<u<1$)}
\label{Zemi ogin}

In accordance with \eqref{Kalinnikov Symphony no.1},
for
$u\in(0,1)$
and $j=0,1$ define $G_\sigma^j=G_{k,\sigma}^{j}$ by
\begin{equation}
\label{Kamchatka}
G_\sigma^j(u)
=\int_0^{\pi/2}
H_\sigma^j\left(\sqrt{1/u^2-1},\cos\varphi\right)
\cos k\varphi
\,d\varphi,
\end{equation}
where $H_\sigma^j=H_{k,\sigma}^j$ are given by \eqref{Scherbakov Symphony no.5}. Then
clearly
$G_\sigma\big|_{(0,1)}=G_\sigma^0+G_\sigma^1.$
\label{Shostakovich 1 Hahn}
Note that $k$ appears in \eqref{Kamchatka} only in the term $\cos k\varphi$.

\subsection{Estimates for $H_\sigma^1$ and $G_\sigma^1$}

\begin{proposition}
\label{Ella Fitzgerald - Caravan}
Assume that $0<\varepsilon<1/p\leq1/3$. For all $u\in(0,1)$ and $b\in[0,1]$ we have
\begin{equation*}
\label{In The Woods}
H_\sigma^1(\sqrt{1/u^2-1},b)\,\leqsim\,
\log\frac1{1-u}\,,
\end{equation*}
with the implied constant being absolute.
Here $\sigma=1/p-\varepsilon$, as in \eqref{19vodki}.
\end{proposition}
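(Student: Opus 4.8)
The plan is to estimate $H^1_\sigma$ directly from its defining integral in \eqref{Scherbakov Symphony no.5}, using two completely elementary observations about the endpoints. Since $b\leq 1$ and $s>0$ one has the pointwise bound
\[
1-2bs+s^2\;\geq\;1-2s+s^2\;=\;(1-s)^2 ,
\]
so the integrand never exceeds $|1-s|^{-2\sigma}/s$; this matters because $2\sigma<2/3<1$, which makes $|1-s|^{-2\sigma}$ integrable over bounded sets. Secondly, with $a=\sqrt{1/u^2-1}$ and $c=\sqrt{a^2+b^2}$ we have $c\leq\sqrt{a^2+1}=1/u$, hence
\[
\frac{1-u}{u}\;=\;\frac{1/u^2-1}{(1+u)/u}\;\leq\;\frac{a^2}{c+b}\;=\;c-b\;\leq\;c+b\;\leq\;\frac{1+u}{u} .
\]
So the interval of integration $[c-b,c+b]$ has length $2b\leq2$, its left endpoint is at least $(1-u)/u$, and its right endpoint is at most $1/u+1$. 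Everything then reduces to combining these facts with the trivial inequality $\log\frac1{1-u}\geq u$.

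I would then split according to the size of $u$, distinguishing $0<u\leq1/3$ from $1/3\leq u<1$. If $u\leq1/3$ the left endpoint satisfies $c-b\geq\frac{1-u}{u}\geq2$, so the whole integration happens in $\{s\geq2\}$, where $(1-s)^2\geq s^2/4$, hence $(1-2bs+s^2)^{-\sigma}\leq4^\sigma s^{-2\sigma}$; since $\int_{c-b}^{c+b}s^{-1-2\sigma}\,ds\leq2(c-b)^{-1-2\sigma}$ (length times the value at the left endpoint) and $c-b\geq2$, this gives $H^1_\sigma\leqsim(c-b)^{-1}\leq\frac{u}{1-u}\leqsim u\leq\log\frac1{1-u}$. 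If instead $u\geq1/3$, the right endpoint satisfies $c+b\leq1/u+1\leq4$, and I would cut the integral at $s=1/2$: on $\{s\leq1/2\}$ one has $1-s>1/2$, so $(1-2bs+s^2)^{-\sigma}\leq4^\sigma$ and $\int\frac{ds}{s}$ produces at most $4^\sigma\log\frac1{2(c-b)}\leq4^\sigma\log\frac1{1-u}$; on $\{s\geq1/2\}$ one has $1/s\leq2$, and since $s$ then ranges inside the fixed interval $[1/2,4]$, the integral of $|1-s|^{-2\sigma}$ is at most an absolute constant, which is $\leqsim\log\frac1{1-u}$ because $u\geq1/3$ forces $\log\frac1{1-u}\geq\log\frac32>0$.

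The one point requiring genuine care --- and the main obstacle --- is that all implied constants must be absolute, i.e.\ independent of $\sigma=1/p-\varepsilon$, which can be arbitrarily close to $0$. Hence one must never integrate $s^{-1-2\sigma}$ or $|1-s|^{-2\sigma}$ in a way that produces a factor $1/\sigma$: over the short interval $[c-b,c+b]$ one bounds $\int s^{-1-2\sigma}$ by length $\times$ maximum, and wherever $|1-s|^{-2\sigma}$ is integrated over a bounded set one only invokes $1-2\sigma\geq1/3$, never $1/(1-2\sigma)$ by itself. This also clarifies the role of the hypothesis $1/p\leq1/3$: it guarantees $2\sigma\leq2/3$, hence both the uniform integrability of $|1-s|^{-2\sigma}$ and the uniform bound $4^\sigma\leq4^{1/3}$. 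Likewise it is essential that the bound degenerates as $u\to0$: one cannot merely dominate $H^1_\sigma$ by a constant, since for small $u$ the interval $[c-b,c+b]$ lies at distance $\sim1/u$ from the singularities of the integrand (at $s=0$ and, when $b=1$, at $s=1$), which forces $H^1_\sigma\leqsim u$ there --- exactly the decay rate of $\log\frac1{1-u}$ as $u\to0$. Once this dichotomy is in place, every remaining estimate is a one-line computation.
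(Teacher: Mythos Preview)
Your proof is correct. The overall strategy coincides with the paper's: split according to whether $u$ is small (so the integration interval sits far from the singularities at $s=0$ and $s=1$, giving decay of order $u$) or $u$ is bounded away from $0$ (so one handles the singularities directly and picks up $\log\frac{1}{1-u}$).

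The execution differs in two places. First, the paper observes that both the integrand and the integration domain $[c-b,c+b]$ are monotone in $b$, so it suffices to treat $b=1$, where $c=1/u$ and the integral becomes $\int_{1/u-1}^{1/u+1}\frac{ds}{s|1-s|^{2\sigma}}$; you instead keep general $b$ and rely on the pointwise bound $1-2bs+s^2\geq(1-s)^2$, which accomplishes the same reduction. Second, in the regime $u$ close to $1$ the paper splits at $s=1$, changes variables, and invokes its prepared Lemma~\ref{Barney Miller - Aces of Fashion} on $\int_0^\eta\frac{dv}{(1-v)v^\beta}$; your split at $s=1/2$ avoids that lemma and makes the argument self-contained, at the cost of a slightly longer case analysis. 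Both routes use the hypothesis $1/p\leq 1/3$ only through $1-2\sigma\geq 1/3$, exactly as you note.
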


\begin{proof}
Write $a = \sqrt{1/u^2-1}$.
First observe that the integration domain of $H_\sigma^1(a,b)$, namely $[c-b,c+b]$, increases with $b$, and so does the integrand. Therefore it suffices to estimate
$$
H_\sigma^1(a,1)=\int_{1/u-1}^{1/u+1}\frac{ds}{s|1-s|^{2\sigma}} \,.
$$
We split cases. If
$0<u\leq 1/2$
then
$[1/u-1,1/u+1]\subset[1/(2u),2/u]$.
Consequently,
$$
H_\sigma^1(a,1)\sim u\int_{1/u-1}^{1/u+1}\frac{ds}{(s-1)^{2\sigma}}
=\frac{u^{2\sigma}}{1-2\sigma}\left(1-(1-2u)^{1-2\sigma}\right).
$$
For $x,\gamma\in(0,1)$ we have $1-x^\gamma\leq 1-x$, thus the above expression is controlled by $\leqsim u^{1+2\sigma}$.

If, on the other hand,
$1/2\leq u \leq 1$, then
$$
\aligned
H_\sigma^1(a,1)
& =\int_{1/u-1}^{1}\frac{ds}{s(1-s)^{2\sigma}}+\int_{1}^{1/u+1}\frac{ds}{s(s-1)^{2\sigma}}
\leq\int_{0}^{2-1/u}\frac{dv}{(1-v)v^{2\sigma}}+\int_{0}^{1/u}\frac{dw}{w^{2\sigma}}\,.
\endaligned
$$
Here we wrote $v=1-s$, $w=s-1$ and estimated $w+1\geq1$. Now the very last integral can be explicitly calculated, while for the penultimate one we use Lemma \ref{Barney Miller - Aces of Fashion} again.

At the end we observe that on $(0,1)$ we have $u^{1+2\sigma}\leq u\leq \log(1/(1-u))$.
\end{proof}

\begin{corollary}
\label{G1,2}
We have
$$
\lim_{\varepsilon\searrow0}
\int_0^1|G_{\sigma}^1(u)|^p\cdot \varepsilon u^{-1+2\varepsilon p}\,du
=0.
$$
\end{corollary}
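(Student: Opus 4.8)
The plan is to combine the uniform pointwise bound of Proposition~\ref{Ella Fitzgerald - Caravan} with an elementary estimate for the resulting weighted integral.

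First I would discard the oscillation. Since $|\cos k\varphi|\leq1$ and $\cos\varphi\in[0,1]$ for $\varphi\in(0,\pi/2)$, formula~\eqref{Kamchatka} together with Proposition~\ref{Ella Fitzgerald - Caravan} gives, for every $u\in(0,1)$ and every $\varepsilon\in(0,1/p)$ with $p\geq3$,
\[
|G_\sigma^1(u)|\leq\int_0^{\pi/2}\Big|H_\sigma^1\big(\sqrt{1/u^2-1},\cos\varphi\big)\Big|\,d\varphi\,\leqsim\,\log\frac1{1-u},
\]
with an absolute implied constant. Here it is essential that this bound be \emph{uniform} in $\varepsilon$, so that the explicit factor $\varepsilon$ in the weight will force the limit. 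Consequently it suffices to show that $\varepsilon\int_0^1(\log\frac1{1-u})^p u^{-1+2\varepsilon p}\,du\to0$ as $\varepsilon\searrow0$.

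To estimate this I would split the integral at $u=1/2$. On $(1/2,1)$ the weight is harmless: $u^{-1+2\varepsilon p}\leq2$ once $\varepsilon p<1$, while $\int_{1/2}^1(\log\frac1{1-u})^p\,du$ is a finite constant (the logarithmic singularity at $u=1$ is integrable to any power), so this portion contributes at most $C_p\,\varepsilon$. On $(0,1/2)$ the logarithm is the small factor: from $\log\frac1{1-u}=\int_0^u\frac{dt}{1-t}\leq\frac{u}{1-u}\leq2u$ one gets $(\log\frac1{1-u})^p\leqsim u^p$, hence
\[
\varepsilon\int_0^{1/2}\Big(\log\frac1{1-u}\Big)^p u^{-1+2\varepsilon p}\,du\,\leqsim\,\varepsilon\int_0^{1/2}u^{p-1+2\varepsilon p}\,du=\varepsilon\,\frac{(1/2)^{p+2\varepsilon p}}{p+2\varepsilon p}\,\leq\,\frac{\varepsilon}{p}.
\]
Adding the two bounds and letting $\varepsilon\searrow0$ finishes the argument.

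There is no genuine obstacle here once Proposition~\ref{Ella Fitzgerald - Caravan} is available; the point worth stressing is that $k$ enters~\eqref{Kamchatka} only through the oscillatory factor $\cos k\varphi$, which we simply throw away, so $G_\sigma^1$ carries no mass in the limit. All of the sharp lower bound will therefore have to come from the $H_\sigma^0$/$G_\sigma^0$ part analysed next.
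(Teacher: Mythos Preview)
Your proof is correct. Both you and the paper first invoke Proposition~\ref{Ella Fitzgerald - Caravan} (via \eqref{Kamchatka}) to reduce the claim to showing
\[
\varepsilon\int_0^1\Big(\log\frac1{1-u}\Big)^p u^{-1+2\varepsilon p}\,du\longrightarrow 0.
\]
From here the two arguments diverge. The paper substitutes $v=u^{2\varepsilon p}$, then $x=1/v$, to rewrite the integral as a multiple of $\int_1^\infty\big(\log\frac{1}{1-x^{-1/(2\varepsilon p)}}\big)^p x^{-2}\,dx$, which it then bounds via Lemma~\ref{sticky} (the asymptotics $I_p(M)\sim 1/M$). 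Your splitting at $u=1/2$ is more direct and entirely self-contained: on $(1/2,1)$ you exploit that the weight is bounded and the $p$-th power of the logarithm is integrable; on $(0,1/2)$ you use $\log\frac{1}{1-u}\lesssim u$ to kill the weight's singularity at the origin. This avoids any appeal to Lemma~\ref{sticky}, at the modest cost of a $p$-dependent constant $C_p$---which is harmless here since $p$ is fixed. The paper's route has the virtue of reusing machinery already needed in Section~\ref{ke se prestoram}; yours is shorter and more elementary.
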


\begin{proof}
From \eqref{Kamchatka} and Proposition~\ref{Ella Fitzgerald - Caravan} we immediately get
$$
\int_0^1|G_{\sigma}^1(u)|^p\cdot\varepsilon u^{-1+2\varepsilon p}du
\leqsim
\int_0^1\left(\log\frac{1}{1-u}\right)^p\cdot\varepsilon u^{-1+2\varepsilon p}du.
$$

Introducing new variables, first $v=u
^{2\varepsilon p}$ and then $x=1/v$, turns the last integral into
$$
\frac{1}{2p}\int_1^\infty\left(\log\frac{1}{1-x^{-1/(2\varepsilon p)}}\right)^p\,\frac{dx}{x^2}
\leq
\frac{1}{2p}\int_1^\infty\left(\log\frac{1}{1-x^{-1/(2\varepsilon p)}}\right)^p dx.
$$
Now Lemma~\ref{sticky} shows that this tends to zero as $\varepsilon\searrow0$.
\end{proof}

\subsection{Estimates for $H_\sigma^0$ and $G_\sigma^0$}
Recall the notation $\sigma=1/p-\varepsilon$, cf. \eqref{19vodki}. By combining \eqref{Scherbakov Symphony no.5} and
\eqref{Kamchatka} we get, for $u\in(0,1)$,
\begin{equation}
\label{Sarmati}
G_\sigma^0(u)
=\int_0^{\pi/2}
\int_0^{\sqrt{1/u^2-\sin^2\varphi}-\cos\varphi}
\frac{(1-2s\cos\varphi+s^2)^{-\sigma
}-(1+2s\cos\varphi+s^2)^{-\sigma
}}{s}
\,ds
\cos k\varphi
\,d\varphi.
\end{equation}

Let $\mu_\varepsilon=\mu_{\varepsilon,p}$ be the measure on $(0,1)$ defined by
\begin{equation}
\label{mue}
d\mu_\varepsilon(u):=\varepsilon u^{-1+2\varepsilon p}du.
\end{equation}
From $G_\sigma=G_\sigma^0+G_\sigma^1$, cf. page \pageref{Shostakovich 1 Hahn}, and
Corollary~\ref{G1,2}
we see that
in order to finish the proof of Theorem~\ref{gastrocnemius}, we have to prove that
\begin{equation}
\label{I want to be ninja}
\limsup_{\varepsilon\searrow0}\nor{G_{k,\sigma}^0}_{L^p(\mu_\varepsilon)}\leqsim\frac{1/p}{k^{2(1-1/p)}}
\end{equation}
uniformly in $p\geq3$ and odd $k\in\mathbb{N}\backslash\{1,2\}$, cf. \eqref{kp3}.

\subsection{Oscillation}
So far we have not yet exploited the oscillating
nature of the integrand in \eqref{Sarmati} coming from $\cos k\varphi$. In order to do it now, we
split the outer integration domain in \eqref{Sarmati}, that is, $[0,\pi/2)$, into $k$ disjoint intervals $I_j$ of size $\pi/(2k)$ each.
Thus we decompose
$[0,\pi/2)=I_1\sqcup I_2\sqcup\hdots\sqcup I_k$
with
$I_j=\left[(j-1)\pi/(2k),j\pi/(2k)\right)$.
Recall that $k\in\mathbb{N}$, $k\geq3$, was {\it odd} and define
\begin{equation*}
l=l(k):=\frac{k-1}{2}.
\end{equation*}
We expect
that when integrating over
$J_m:=I_{2m-1}\sqcup I_{2m}$, an {\it almost cancellation} should occur, since
the sign of $\cos k\varphi$
will be positive on one of the intervals $I_{2m-1},I_{2m}$ and negative on the other. So we want to group the intervals as follows:
\begin{equation*}
\underbrace{I_1\sqcup I_2}_{J_1}\sqcup\underbrace{I_3\sqcup I_4}_{J_2}\sqcup\hdots\sqcup\underbrace{I_{k-2}\sqcup I_{k-1}}_{J_{l(k)}}\sqcup I_{k}.
\end{equation*}
More precisely,
our plan regarding the integration with respect to $d\varphi$ in \eqref{Sarmati} reads
\begin{equation}
\label{Parsifal Ouverture}
G_\sigma^0(u)=
\int_0^{\pi/2}
R(\varphi)\cos k\varphi\,d\varphi
=\int_{J_1}+\sum_{m=2}^{l(k)}\int_{J_m}+\int_{I_k}.
\end{equation}
(If $k=3$ and thus $l(k)=1$, the middle sum disappears.)
The optimal bounds are eventually attained for $\varphi\approx0$, that is, on $J_{1,2}$; see Section \ref{Cessna}.

Let us start with the calculations.
Introducing the new variable
$(2m-1)\pi/k-\varphi$ in the integral over $I_{2m}$ in the left-hand side below gives
$$
\aligned
\Big(\int_{I_{2m-1}}+\int_{I_{2m}}\Big)R(\varphi)\cos k\varphi\,d\varphi
&= \int_{I_{2m-1}}\Big[R(\varphi)-R\Big((2m-1)\frac{\pi}{k}-\varphi\Big)\Big]\cos k\varphi\,d\varphi.
\endaligned
$$
The observation $[0,\pi/2)=kI_{2m-1}-(m-1)\pi$ gives rise to the new variable $\vartheta=k\varphi-(m-1)\pi$. Finally introduce $\psi=\pi/2-\vartheta$, which gives
\begin{align*}
& \int_{J_m}R(\varphi)\cos k\varphi\,d\varphi \\
&= \frac{(-1)^{m-1}}{k}\int_{0}^{\pi/2}
\Big[R\Big(\frac{(2m-1)\pi/2-\psi}{k}\Big)-R\Big(\frac{(2m-1)\pi/2+\psi}{k}\Big)\Big]\sin \psi\,d\psi.
\end{align*}

Similarly, for the integral over $I_k$ we have
$$
\int_{I_k}R(\varphi)\cos k\varphi\,d\varphi
=\frac{(-1)^{l(k)}}{k}\int_0^{\pi/2}
                       R\Big(\frac{\pi}{2}-\frac{\psi}{k}\Big)
\sin \psi\,d\psi
\,.
$$

Putting everything together and applying Lagrange's mean value theorem, for some
\begin{equation}
\label{EU}
\xi=\xi_m=\xi(\psi,m,k)\in\Big(\frac{(2m-1)\pi/2-\psi}{k},\frac{(2m-1)\pi/2+\psi}{k}\Big)
\end{equation}
the formula \eqref{Parsifal Ouverture} now looks like
\begin{equation}
\label{Gre zares}
G_\sigma^0(u) =    \frac{2}{k^2}\sum_{m=1}^{l(k)}
(-1)^{m}      \int_{0}^{\pi/2}
       R'(\xi_m) \psi \sin\psi \,d\psi +\frac{(-1)^{l(k)}}{k}\int_0^{\pi/2}
R\Big(\frac{\pi}{2}-\frac{\psi}{k}\Big)\sin \psi\,d\psi.
\end{equation}

As hinted at above, we plan to use \eqref{Gre zares} for
$R(\varphi) =H_\sigma^0\big(\sqrt{1/u^2-1},\cos\varphi\big)$,
where $H_\sigma^0$ was defined in \eqref{Scherbakov Symphony no.5}; recall that $\sigma=1/p-\varepsilon\in(0,1/3]$. Therefore we have the formula
\begin{equation}
\label{Kaede}
R'(\varphi)
=T(\cos\varphi)\sin\varphi,
\end{equation}
where
\begin{equation}
\label{pehtran}
T(b)=T(a,b)
: = -\frac{\pd}{\pd b}H_\sigma^0(a,b) = U (a,b)-V(a,b)
\end{equation}
for
\begin{eqnarray}
\label{T1}
&&   {\displaystyle
U (a,b)
=\frac{ [a^2+1-4b(\sqrt{a^2+b^2}-b)]^{-\sigma}-(a^2+1)^{-\sigma} }{\sqrt{a^2+b^2}}},\\
\label{T2}
&&  {\displaystyle
V(a,b)
= 2\sigma\int_0^{\sqrt{a^2+b^2}-b}
\left[
(s^2-2bs+1)^{-\sigma-1}+(s^2+2bs+1)^{-\sigma-1}
\right]
ds.}
\end{eqnarray}

We merge \eqref{Gre zares}, \eqref{Kaede} and \eqref{pehtran} into the following finding.
Recall that $\xi_m$ is constrained by the requirement \eqref{EU}.
Moreover, our choice of $R$ also depends on $u$ and $\sigma=1/p-\varepsilon$, therefore we have have to interpret \eqref{EU} as $\xi_m=\xi_m(\psi,k,u,\sigma)$.

\begin{proposition}
\label{Jordan K'nev}
Let $U,V$ be given as in \eqref{T1}, \eqref{T2}, respectively, and $\xi_m$ as in \eqref{EU}.
Then $G_\sigma^0=\cU-\cV+\cW$, where for $u\in(0,1)$ we have
\begin{align}
\label{U}
\cU(u)=\cU_\sigma(u)&=
\frac{2}{k^2}\sum_{m=1}^{l(k)}
(-1)^{m}      \int_{0}^{\pi/2}
      U\left(\sqrt{1/u^2-1},\cos\xi_m\right)\sin\xi_m\cdot \psi\sin\psi \,d\psi, \\
\label{V}
\cV(u)=\cV_\sigma(u)&=
\frac{2}{k^2}\sum_{m=1}^{l(k)}
(-1)^{m}      \int_{0}^{\pi/2}
      V\left(\sqrt{1/u^2-1},\cos\xi_m\right)\sin\xi_m\cdot \psi\sin\psi \,d\psi, \\
\cW(u) =\cW_\sigma(u)&=
              \frac{(-1)^{l(k)}}{k}\int_0^{\pi/2}
H_\sigma^0\left(\sqrt{1/u^2-1},\sin(\psi/k)\right)\cdot\sin\psi\,d\psi.\nonumber
\end{align}
\end{proposition}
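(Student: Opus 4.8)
The plan is to obtain the proposition as a routine substitution into the already-established identity \eqref{Gre zares}; the only step carrying genuine content is justifying formula \eqref{Kaede} for $R'$, so I would dispatch that first.

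First I would fix $u\in(0,1)$, write $a=\sqrt{1/u^2-1}$ and $c=\sqrt{a^2+b^2}$, and differentiate $H_\sigma^0(a,b)$ from \eqref{Scherbakov Symphony no.5} with respect to $b$, keeping in mind that $b$ enters both the integrand and the upper endpoint $c-b$. Applying Leibniz's rule, I would use $\pd_b(c-b)=b/c-1=-(c-b)/c$ together with the elementary identities $1-2b(c-b)+(c-b)^2=a^2+1-4b(c-b)$ and $1+2b(c-b)+(c-b)^2=a^2+1$ to see that the boundary term is $-U(a,b)$ with $U$ as in \eqref{T1}; for the interior term I would note that $\pd_b(1\mp2bs+s^2)^{-\sigma}=\pm2\sigma s(1\mp2bs+s^2)^{-\sigma-1}$, so the prefactor $1/s$ cancels and the integral collapses to $V(a,b)$ as in \eqref{T2}. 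This yields $\pd_bH_\sigma^0(a,b)=-(U-V)=-T(a,b)$, and the chain rule applied to $R(\varphi)=H_\sigma^0(a,\cos\varphi)$ then gives $R'(\varphi)=-\sin\varphi\,\pd_bH_\sigma^0(a,\cos\varphi)=T(\cos\varphi)\sin\varphi$, which is \eqref{Kaede}. I would add a word on why the differentiation under the integral sign is legitimate where it is used: for $b=\cos\varphi\in[0,1)$ one has $s^2\mp2bs+1=(s\mp b)^2+(1-b^2)\geq1-b^2>0$ on $[0,c-b]$, so all integrands stay continuous, and the endpoint $\varphi=0$ plays no role since $\sin\varphi$ vanishes there.

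It then remains to substitute. Plugging \eqref{Kaede} into the first sum of \eqref{Gre zares} and writing $R'(\xi_m)=\big(U(a,\cos\xi_m)-V(a,\cos\xi_m)\big)\sin\xi_m$ with $a=\sqrt{1/u^2-1}$, the sum splits term by term into exactly $\cU(u)-\cV(u)$, with $\cU,\cV$ as in \eqref{U}, \eqref{V}. For the remaining term of \eqref{Gre zares}, the identity $\cos(\pi/2-\psi/k)=\sin(\psi/k)$ turns $R(\pi/2-\psi/k)$ into $H_\sigma^0\big(\sqrt{1/u^2-1},\sin(\psi/k)\big)$, so this term is precisely $\cW(u)$, and combining the two gives $G_\sigma^0=\cU-\cV+\cW$. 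I do not expect a real obstacle here: the hard analytic work---the reduction \eqref{Gre zares} exploiting the oscillation of $\cos k\varphi$, and the explicit structure of $H_\sigma^0$---is already in place, so the only point that needs care is the differentiation under the integral sign discussed above; the proposition is otherwise bookkeeping that isolates the three pieces $\cU$, $\cV$, $\cW$ for the separate estimates carried out in the following subsections.
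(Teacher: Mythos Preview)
Your proposal is correct and follows exactly the paper's approach: the proposition is introduced in the paper with the sentence ``We merge \eqref{Gre zares}, \eqref{Kaede} and \eqref{pehtran} into the following finding,'' and you carry out precisely this merging, additionally supplying the Leibniz-rule computation behind \eqref{Kaede}--\eqref{pehtran} that the paper only states. Your verification of the boundary and interior terms, and of the substitution $\cos(\pi/2-\psi/k)=\sin(\psi/k)$ for the $\cW$ piece, is accurate.
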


\begin{proposition}
\label{Padna}
Assuming the notation of Proposition~\ref{Jordan K'nev}, we have
\begin{align}
\label{Uest}
\lim_{\e\searrow0}\nor{\cU}_{L^p(\mu_\e)} & =0,\\
\label{Vest}
\limsup_{\e\searrow0}\nor{\cV}_{L^p(\mu_\e)} & \leqsim\frac{1/p}{k^{2(1-1/p)}},\\
\label{West}
\limsup_{\e\searrow0}\nor{\cW}_{L^p(\mu_\e)} & \leqsim\frac{1/p}{k^{2}}.
\end{align}
\end{proposition}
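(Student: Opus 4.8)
The plan is to estimate the three quantities $\cU$, $\cV$, $\cW$ separately, in each case first bounding the relevant integrands $U$, $V$, $H^0_\sigma$ pointwise in $u$ (keeping track of the dependence on $k$ and $\sigma$), and then integrating the resulting $p$-th powers against the probability-like measure $\mu_\e$ defined in \eqref{mue}. The key structural gain has already been extracted in Proposition~\ref{Jordan K'nev}: the oscillation $\cos k\varphi$ was traded, via the grouping into the pairs $J_m$ and Lagrange's mean value theorem, for the prefactor $k^{-2}$ in $\cU,\cV$ and $k^{-1}$ in $\cW$ together with the extra smoothing weight $\psi\sin\psi$ (respectively $\sin\psi$). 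So what remains is genuinely non-oscillatory: we must show that the surviving integrals of $U$, $V$ and $H^0_\sigma$ along the arc $\xi_m$ are, after raising to the $p$-th power and integrating $d\mu_\e$, bounded by a constant times $1/p$ (for $\cV$ and $\cW$) or tend to $0$ (for $\cU$), uniformly in $p\geq 3$ and odd $k\geq 3$.

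For \eqref{Uest}: the numerator of $U(a,b)$ in \eqref{T1} is a difference of two $(-\sigma)$-powers of quantities that are both comparable to $a^2+1=1/u^2$ when $u$ is bounded away from $1$, and whose relative gap is $O(b(\sqrt{a^2+b^2}-b)/(a^2+1))$; since $\sqrt{a^2+b^2}-b\lesssim 1$ and $b\le1$, this gap is $O(u^2)$, and the mean value theorem applied to $t\mapsto t^{-\sigma}$ gives $|U|\lesssim \sigma u^{2}\cdot u^{2\sigma}\cdot u = \sigma u^{3+2\sigma}$ away from $u\approx1$; near $u=1$ one checks $|U|$ stays bounded (indeed small, since the two arguments coalesce as $b\to 0$ while $a\to0$). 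Multiplying by $\sin\xi_m\le1$ and the factor $2l(k)/k^2\lesssim 1/k$, then by the weight $\int_0^{\pi/2}\psi\sin\psi\,d\psi=O(1)$, we get $|\cU(u)|\lesssim \sigma\, k^{-1}$ uniformly in $u$; hence $\nor{\cU}_{L^p(\mu_\e)}\lesssim \sigma\, k^{-1}\to 0$ as $\e\searrow0$ (note $\sigma=1/p-\e\to 1/p$). So \eqref{Uest} follows with room to spare; in fact no fine $u$-analysis is needed here, the crude pointwise bound suffices because of the $\sigma$ gain.

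For \eqref{Vest} and \eqref{West} the extra power of $u^{2\sigma}$ present in $\cU$ is absent, so we must be more careful. The idea is to bound $V(a,b)$ and $H^0_\sigma(a,b)$ along the arc $\xi_m$ (where $b=\cos\xi_m$ with $\xi_m\approx (2m-1)\pi/(2k)$, so $b$ ranges over essentially all of $[0,1]$ as $m$ runs over $1,\dots,l(k)$) by expressions of the form $C\bigl(u^{2\sigma-1}/(1-\sigma)$-type terms plus a logarithmic term $\log\frac{1}{1-u}\bigr)$, using Lemma~\ref{Barney Miller - Aces of Fashion} for the $ds$-integrals exactly as was done for $H^1_\sigma$ in Proposition~\ref{Ella Fitzgerald - Caravan}; indeed $V$ is built from the same kernels $(s^2\pm 2bs+1)^{-\sigma-1}$ integrated over the same kind of interval $[0,\sqrt{a^2+b^2}-b]$, whose length is $\lesssim \min\{1,1/u\}$ and $\gtrsim$ something comparable, while $H^0_\sigma$ is the difference version. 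One then sums in $m$: the sum over $l(k)\sim k/2$ terms against the prefactor $k^{-2}$ produces exactly the targeted $k^{-2}\cdot k = k^{-1}$ before the final $u$-integration — but crucially the $u$-dependent bound must itself carry a factor $k^{-1+2/p}$, coming from the fact that $b=\cos\xi_m$ is small ($\approx (2m-1)\pi/(2k)$) for the dominant indices $m=1,2$ (this is the $\varphi\approx 0$, i.e. $J_{1,2}$, mechanism advertised just before \eqref{Parsifal Ouverture} and referenced to Section~\ref{Cessna}); when $b$ is small the integration interval $\sqrt{a^2+b^2}-b$ shrinks, suppressing $V$ and $H^0_\sigma$. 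Quantifying this $b$-smallness suppression and summing it against $(-1)^m$ is what upgrades the naive $k^{-1}$ to $k^{-2(1-1/p)}=k^{-2}\cdot k^{2/p}$. Finally one raises to the $p$-th power and integrates $d\mu_\e(u)=\e u^{-1+2\e p}du$ over $(0,1)$: the term coming from $u^{2\sigma-1}=u^{-1+2/p-2\e}$ contributes $\e\int_0^1 u^{(2\sigma-1)p-1+2\e p}\,du=\e\int_0^1 u^{2p/q\cdot(\dots)}$ — more precisely the exponent is arranged (as with $\sigma$ chosen $=1/p-\e$) so that the integral is $\sim 1/(\e p)\cdot\e = 1/p$ up to constants, while the $\bigl(\log\frac1{1-u}\bigr)^p$ term contributes $o(1)$ by Lemma~\ref{sticky} exactly as in Corollary~\ref{G1,2} and Theorem~\ref{metatarsus dex}. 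Combining gives $\limsup_\e\nor{\cV}_{L^p(\mu_\e)}\lesssim \bigl(1/p\bigr)^{1/p}\cdot k^{-2(1-1/p)}\cdot(\text{const})\lesssim (1/p)k^{-2(1-1/p)}$ (absorbing $(1/p)^{1/p}\le 1$ and using $1/p\le 1/3$), and likewise $\nor{\cW}_{L^p(\mu_\e)}\lesssim (1/p)k^{-2}$ since $\cW$ only gained $k^{-1}$ but its single integrand $H^0_\sigma(a,\sin(\psi/k))$ has $b=\sin(\psi/k)\lesssim 1/k$ uniformly, giving an extra $k^{-1}$ from the interval-shrinking, i.e. $k^{-1}\cdot k^{-1}=k^{-2}$ with no $k^{2/p}$ loss.

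The main obstacle is the bookkeeping in \eqref{Vest}: one must show the $b$-dependence of $V(a,\cos\xi_m)$ (and of $H^0_\sigma$) genuinely produces the factor $k^{2/p}$ after summing over $m$ with the alternating signs, and simultaneously that the $u$-integration of the $p$-th power yields precisely $1/p$ and not a worse power of $1/p$ — the choice $\sigma=1/p-\e$ and the weight $\e u^{-1+2\e p}$ are calibrated so that the critical exponent in $\int_0^1 u^{\cdots}du$ is exactly on the boundary, and this balancing has to be verified carefully, controlling all constants uniformly in $p\ge3$ and odd $k\ge3$. The $\cU$ and $\cW$ estimates, by contrast, should be routine consequences of the crude pointwise bounds once the integrand analysis from Proposition~\ref{Ella Fitzgerald - Caravan} is reused.
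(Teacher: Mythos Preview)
Your outline has several genuine gaps.

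For \eqref{Uest}: your pointwise analysis of $U$ is roughly right, but the final step fails. A bound $|\cU(u)| \lesssim \sigma/k$ that is \emph{uniform} in $u$ cannot yield $\|\cU\|_{L^p(\mu_\e)} \to 0$, because $\mu_\e((0,1)) = 1/(2p)$ is fixed and positive, and $\sigma \to 1/p \neq 0$. The paper instead retains the $u$-dependence: it shows $U(a,b) \lesssim u^2 (1-b^2)^{-\sigma}$ (Proposition~\ref{Confutatis}), whence $|\cU_m(u)| \lesssim u^2$, and then $\int_0^1 u^{2p}\, d\mu_\e \leq \e \int_0^1 u^{2p-1}\, du \to 0$. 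The factor of $\e$ survives precisely because the exponent of $u$ stays strictly positive.

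For \eqref{Vest}: your mechanism is wrong on two counts. First, for $m=1,2$ the angle $\xi_m \approx (2m-1)\pi/(2k)$ is small, so $b = \cos\xi_m$ is \emph{close to $1$}, not small; what is small is $\sin\xi_m$. Second, the integration interval $[0,\sqrt{a^2+b^2}-b]$ does not shrink as $b$ varies in $(0,1)$: for large $a$ its length is $\approx a$ regardless of $b$. So there is no ``$b$-smallness suppression'' of the kind you describe. The paper's approach is quite different. It proves a uniform-in-$a$ bound $V(a,\cos\xi) \lesssim \sigma (\sin\xi)^{-1-2\sigma}$ (Proposition~\ref{Prokofiev - Sonata no. 6}), so that $V\sin\xi$ is in fact \emph{largest} near $m=1$. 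The $m=1$ term is handled directly by integrating $(\sin\xi_1)^{-2\sigma} \lesssim (k/(\pi/2-\psi))^{2\sigma}$ in $\psi$, yielding $|\cV_1|\lesssim \sigma/k^{2(1-\sigma)}$. For the remaining alternating sum $\cV_{\text{sum}}$ over $m\geq 2$ the paper does \emph{not} estimate termwise; instead it proves (Proposition~\ref{Sekigahara}) that $b \mapsto V(a,b)\sqrt{1-b^2}$ is increasing on the relevant range for $a$ large, so the alternating sum is dominated by its single $m=2$ term. This monotonicity lemma is the technical heart of \eqref{Vest}, and you have not identified it; your ``summing against $(-1)^m$'' remark gestures toward it but the stated reason (interval-shrinking) is not what produces the decay.

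For \eqref{West}: again, the interval does not shrink when $b = \sin(\psi/k)$ is small (it tends to $a$). The extra factor of $1/k$ comes instead from applying Lagrange's theorem to the \emph{integrand} of $H^0_\sigma$: the difference $(s^2-2bs+1)^{-\sigma} - (s^2+2bs+1)^{-\sigma}$ is $\lesssim \sigma b s \cdot \xi^{-\sigma-1}$ with $b=\sin(\psi/k) \lesssim 1/k$. Integrating this in $s$ gives $H^0_\sigma(a,\sin(\psi/k)) \lesssim \sigma/k$ uniformly in $u$, hence $|\cW| \lesssim \sigma/k^2$.
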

Together, Propositions~\ref{Padna},~\ref{Jordan K'nev} imply \eqref{I want to be ninja}, which thus completes the proof of Theorem~\ref{gastrocnemius} (and hence Theorems~\ref{t: Lowest} and \ref{t: main}).

In the consecutive sections we prove statements \eqref{Uest}, \eqref{Vest} and \eqref{West} one by one.
As indicated in \eqref{Parsifal Ouverture}, it will be useful to single out the term in \eqref{Gre zares} corresponding to $m=1$.

\subsection{Proof of (\ref{Uest})}

Notice that, for $c=\sqrt{a^2+b^2}$ and $\Upsilon(a,b)=(c-2b)^2+(1-b^2)$,
\begin{equation}
\label{Hamilkar}
U(a,b)=\frac{\Upsilon(a,b)^{-\sigma}-\Upsilon(a,0)^{-\sigma}}{c}\,.
\end{equation}

\begin{lemma}
\label{cvicek Martincic}
Let $u\in(0,1)$, $a=\sqrt{1/u^2-1}$, $b\in(0,1)$, $\sigma\in(0,1/2)$ and
$\mu=1-|1-2u|^{2\sigma}$.
Then
$$
\Upsilon(a,b)^{-\sigma}-\Upsilon(a,0)^{-\sigma}\leq\mu\Upsilon(a,b)^{-\sigma}\,.
$$
\end{lemma}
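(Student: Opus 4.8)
The plan is to exploit the elementary inequality $x^{-\sigma}-y^{-\sigma}\le(1-(y/x)^{-\sigma})\,x^{-\sigma}=(1-(x/y)^{\sigma})x^{-\sigma}$, valid for $0<x\le y$ and $\sigma>0$, applied with $x=\Upsilon(a,b)$ and $y=\Upsilon(a,0)$. Once one checks that $\Upsilon(a,b)\le\Upsilon(a,0)$ for $b\in(0,1)$ — so that the roles of $x$ and $y$ are legitimate — the claim reduces to the pointwise bound $1-(\Upsilon(a,b)/\Upsilon(a,0))^{\sigma}\le\mu=1-|1-2u|^{2\sigma}$, i.e. to showing that the ratio $\Upsilon(a,b)/\Upsilon(a,0)$ is at least $|1-2u|^{2}$ (then raising to the power $\sigma$ and using monotonicity of $t\mapsto t^\sigma$ preserves the inequality, since $|1-2u|^{2}\in[0,1]$).

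First I would write out $\Upsilon$ explicitly. With $c=\sqrt{a^2+b^2}$ and $a^2=1/u^2-1$, we have $c^2=1/u^2-1+b^2$, hence
\[
\Upsilon(a,b)=(c-2b)^2+1-b^2=c^2-4bc+4b^2+1-b^2=\frac1{u^2}+3b^2-4bc,
\]
and in particular $\Upsilon(a,0)=1/u^2-1+1=1/u^2$. Wait — more carefully, $\Upsilon(a,0)=c_0^2+1$ with $c_0=\sqrt{a^2}=a$, so $\Upsilon(a,0)=a^2+1=1/u^2$. Thus the target ratio is $u^2\,\Upsilon(a,b)=1+u^2(3b^2-4bc)=1+3u^2b^2-4u^2bc$, and I must show this is $\ge(1-2u)^2=1-4u+4u^2$, i.e.
\[
3u^2b^2-4u^2bc\ge-4u+4u^2,\qquad\text{equivalently}\qquad 4u^2bc\le 4u-4u^2+3u^2b^2.
\]
Since $c=\sqrt{1/u^2-1+b^2}$, squaring (both sides are nonnegative for $u\in(0,1)$, $b\in(0,1)$, as the right side $\ge4u-4u^2=4u(1-u)>0$) turns this into a polynomial inequality in $u$ and $b$; I expect it to factor cleanly, plausibly with $(1-u)$ and $(1-b^2)$ as visible nonnegative factors, confirming both $\Upsilon(a,b)\le\Upsilon(a,0)$ and the ratio bound simultaneously. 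The endpoint cases $b\to0$ (ratio $\to1\ge(1-2u)^2$) and $b\to1$ serve as sanity checks.

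The main obstacle is precisely this last polynomial verification: after squaring one gets a degree-four expression in $u$ (with $b$ as a parameter) that must be shown nonnegative on $(0,1)\times(0,1)$, and the bookkeeping — isolating the right square-root term, squaring without introducing spurious sign issues, and then exhibiting the factorization — is the only place where something could go wrong. Everything else (the two applications of $t\mapsto t^\sigma$ monotonicity, the reduction to the ratio bound) is formal. I would therefore organize the write-up as: (1) compute $\Upsilon(a,0)=1/u^2$ and $u^2\Upsilon(a,b)=1+3u^2b^2-4u^2bc$; (2) prove $u^2\Upsilon(a,b)\ge(1-2u)^2$ by the squaring argument and factorization; (3) deduce $\Upsilon(a,b)\le\Upsilon(a,0)$ (the $b=0$ case of the same inequality chain, or directly from $3u^2b^2\le4u^2bc$ which is $3b\le4c$, true since $c\ge\sqrt{1/u^2-1}\ge b\sqrt{1/u^2-1}$... actually simplest: $c^2=a^2+b^2\ge b^2$ is not enough, but $\Upsilon(a,b)-\Upsilon(a,0)=3b^2-4bc=b(3b-4c)$ and $4c>4b>3b$); (4) combine via $\Upsilon(a,b)^{-\sigma}-\Upsilon(a,0)^{-\sigma}=\Upsilon(a,b)^{-\sigma}\bigl(1-(\Upsilon(a,b)/\Upsilon(a,0))^{\sigma}\bigr)\le\Upsilon(a,b)^{-\sigma}\bigl(1-|1-2u|^{2\sigma}\bigr)=\mu\,\Upsilon(a,b)^{-\sigma}$.
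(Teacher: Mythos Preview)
Your reduction is exactly the paper's: rewrite the claim as $\Upsilon(a,b)/\Upsilon(a,0)\ge(1-2u)^2$, then raise to the power $\sigma$. However, you have an arithmetic slip in computing $\Upsilon(a,b)$: substituting $c^2=a^2+b^2=1/u^2-1+b^2$ into $c^2+3b^2-4bc+1$ gives $1/u^2+4b^2-4bc$, not $1/u^2+3b^2-4bc$. (A sanity check: at $b=1$ one has $c=1/u$, and the ratio should equal $(1-2u)^2$ exactly; with your coefficient $3$ it comes out as $(1-u)(1-3u)$, which is negative for $u>1/3$.) With the correct coefficient $4$, your squaring plan does go through cleanly: the inequality becomes $ubc\le 1-u+ub^2$, and squaring yields RHS$^2-$LHS$^2=(1-b^2)(1-u)^2\ge0$. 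Also, step (3) becomes trivial once the error is fixed, since $\Upsilon(a,b)-\Upsilon(a,0)=4b(b-c)\le0$.

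The paper's final step is a bit slicker than squaring: it rewrites the ratio as $u^2\Upsilon(a,b)=1-\dfrac{4(1-u^2)}{\sqrt{(a/b)^2+1}+1}$, which is visibly decreasing in $b$, so one only has to evaluate at $b=1$ where it equals $(1-2u)^2$. Your route and the paper's are really the same argument; the paper just avoids the squaring bookkeeping by spotting the monotonicity.
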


\begin{proof}
The targeted inequality is equivalent to
\begin{equation}
\label{Majke - Hajde plesi sa mnom}
(1-\mu)^{1/\sigma}
\leq\frac{\Upsilon(a,b)}{\Upsilon(a,0)}
=1-\frac{4(1-u^2)}{\sqrt{(a/b)^2+1}+1}
\hskip 30pt
\forall b\in(0,1).
\end{equation}
Clearly, the right-hand side of \eqref{Majke - Hajde plesi sa mnom}
is decreasing as a function of $b$. Therefore its infimum on $(0,1]$ is attained at $b=1$,
where it takes the value $(1-2u)^2$.
\end{proof}

\begin{proposition}
\label{Confutatis}
Let $U$ be as in \eqref{T1} and
$u,a,b,\sigma,\mu$ as in Lemma~\ref{cvicek Martincic}.
Then we have
${\displaystyle
U (a,b)\leqsim u^2(1-b^2)^{-\sigma}.
}$
The implied constant is absolute.
\end{proposition}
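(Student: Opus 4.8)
\textbf{Plan for the proof of Proposition~\ref{Confutatis}.}
The statement to prove is $U(a,b)\leqsim u^2(1-b^2)^{-\sigma}$, with $a=\sqrt{1/u^2-1}$, $u\in(0,1)$, $b\in(0,1)$, $\sigma\in(0,1/2)$, and $\mu=1-|1-2u|^{2\sigma}$. The natural starting point is the factorization \eqref{Hamilkar}, namely $U(a,b)=\big(\Upsilon(a,b)^{-\sigma}-\Upsilon(a,0)^{-\sigma}\big)/c$ with $c=\sqrt{a^2+b^2}$ and $\Upsilon(a,b)=(c-2b)^2+(1-b^2)$. Applying Lemma~\ref{cvicek Martincic} immediately yields the bound
\[
U(a,b)\,\leq\,\frac{\mu\,\Upsilon(a,b)^{-\sigma}}{c}.
\]
So the proof reduces to establishing two elementary estimates: first that the prefactor satisfies $\mu/c\leqsim u^2$, and second that $\Upsilon(a,b)^{-\sigma}\leqsim(1-b^2)^{-\sigma}$.

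For the prefactor, I would note that $c=\sqrt{a^2+b^2}=\sqrt{1/u^2-1+b^2}\geq\sqrt{1/u^2-1}=a$; since $u\in(0,1)$ this is comparable to $1/u$ when $u$ is bounded away from $1$, and in any case $c\gtrsim 1$ whenever, say, $u\le 1/2$, while for $u$ near $1$ one has $c\ge\sqrt{b^2}$ but more usefully $1/c\le 1/a \le u/\sqrt{1-u^2}$. The cleanest route is probably $1/c\le 1/a=u/\sqrt{1-u^2}$, so it suffices to show $\mu\leqsim u^2\sqrt{1-u^2}$, i.e. $\mu\leqsim u\sqrt{1-u}$ up to absolute constants (using $\sqrt{1-u^2}=\sqrt{1-u}\sqrt{1+u}\sim\sqrt{1-u}$). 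Now $\mu=1-|1-2u|^{2\sigma}$, and one treats $0<u\le 1/2$ and $1/2\le u<1$ separately: for $u\le 1/2$, $|1-2u|=1-2u$ and $\mu=1-(1-2u)^{2\sigma}\le 2\sigma\cdot 2u\le 4\sigma u\leqsim u$ by concavity (or $1-x^\gamma\le 1-x\le\gamma(1-x)$-type bounds already invoked earlier in the paper), which is $\leqsim u\sqrt{1-u}$ since $\sqrt{1-u}\gtrsim 1$ there; for $u\ge 1/2$, $|1-2u|=2u-1$ and $\mu=1-(2u-1)^{2\sigma}\le 1-(2u-1)=2(1-u)\leqsim\sqrt{1-u}\leqsim u\sqrt{1-u}$ since $u\gtrsim 1$ there. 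Combining the two ranges gives $\mu/c\leqsim u^2$ (absorbing $\sigma\le 1/2$ into the constant).

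For the second estimate, I need $\Upsilon(a,b)\geqsim 1-b^2$, which is transparent from the definition $\Upsilon(a,b)=(c-2b)^2+(1-b^2)\ge 1-b^2$ — in fact the inequality holds with constant $1$ and no hypotheses at all, so raising to the negative power $-\sigma$ (which reverses the inequality) gives $\Upsilon(a,b)^{-\sigma}\le(1-b^2)^{-\sigma}$. Multiplying the two bounds yields $U(a,b)\leqsim u^2(1-b^2)^{-\sigma}$, with an absolute implied constant, as claimed.

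\textbf{Main obstacle.} There is no serious obstacle: the proposition is an assembly of Lemma~\ref{cvicek Martincic}, the trivial lower bound $\Upsilon(a,b)\ge 1-b^2$, and a two-case estimate of $\mu$. The only point demanding a little care is making sure the $u$-near-$1$ regime is handled correctly — there $c$ can be small (of order $b$, hence possibly close to $0$), so one must genuinely use $1/c\le 1/a=u/\sqrt{1-u^2}$ and exploit that $\mu$ is then also small, of order $1-u$, rather than trying to bound $1/c$ by an absolute constant. Keeping track of the $\sigma$-dependence of the concavity estimates and confirming they absorb into absolute constants (using $\sigma<1/2$) is the other bookkeeping detail to verify.
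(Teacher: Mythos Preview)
Your proposal is correct and follows essentially the same approach as the paper: apply Lemma~\ref{cvicek Martincic} and the trivial bound $\Upsilon(a,b)\ge 1-b^2$ to get $U(a,b)\le(\mu/c)(1-b^2)^{-\sigma}$, then bound $1/c\le 1/a=u/\sqrt{1-u^2}$ and estimate $\mu$ by splitting at $u=1/2$. The only cosmetic difference is that the paper uses the cruder bound $1-x^\gamma\le 1-x$ for $\mu$ in both ranges (yielding $\mu\le 2\min\{u,1-u\}$), whereas you invoke concavity $1-x^\gamma\le\gamma(1-x)$ on the $u\le 1/2$ side; either works.
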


\begin{proof}
Write $c=\sqrt{a^2+b^2}$.
From \eqref{Hamilkar} and Lemma~\ref{cvicek Martincic} we obtain
\begin{equation}
\label{eq: Naqsh-e Jahan}
U(a,b)\leqslant\frac{\mu}{c}\,\Upsilon(a,b)^{-\sigma}\leq\frac{\mu}{c}\,\big(1-b^2\big)^{-\sigma}\,.
\end{equation}
Now we separately estimate $\mu$ and $c$.

For $x,\gamma\in(0,1)$ we have $1-x^\gamma\leq 1-x$, therefore
\begin{equation*}
\mu=1-|1-2u|^{2\sigma}\leq1-|1-2u|=2
	\left\{
		\begin{array}{ccl}
			u & ; & u\in(0,1/2]\\
			1-u & ; & u\in[1/2,1).
		\end{array}
	\right.
\end{equation*}
As to $c$, we use that
$c\geq a=\sqrt{1-u^2}/u\geq\sqrt{1-u}/u$.

Putting these facts together we obtain
$$
\frac{\mu}{c}\leqsim
	\left\{
		\begin{array}{ccl}
			u^2/\sqrt{1-u} & ; & u\in(0,1/2]\\
			u\sqrt{1-u} & ; & u\in[1/2,1)
		\end{array}
	\right\} \leqsim u^2.
$$
Combined with \eqref{eq: Naqsh-e Jahan} this settles the proof.
\end{proof}

\begin{proof}[Proof of (\ref{Uest})]
We will consider each summand in \eqref{U} separately, that is, we decompose $\cU=\cU_1+\hdots+\cU_{l(k)}$, where
for a fixed $m\in\{1,\hdots,l(k)\}$ we denote
$$
{\cU}_m(u)=
\frac{2}{k^2}(-1)^{m}
      \int_{0}^{\pi/2}
       U\left(\sqrt{1/u^2-1},\cos\xi_m\right)\sin\xi_m\cdot\psi\sin\psi \,d\psi.
$$
Proposition~\ref{Confutatis} enables us to estimate
$$
|{\cU}_m(u)|\leqsim
\left(\frac{u}{k}\right)^2
\int_{0}^{\pi/2}
      (\sin\xi_m)^{1-2\sigma} \,d\psi
      \leqsim u^2.
$$
Consequently, recalling \eqref{mue} and
$\sigma=1/p-\varepsilon$ once again,
we obtain
$$
\nor{\cU_m}_{L^p(\mu_\varepsilon)}^p
=\int_0^{1}|\cU_m (u)|^p\cdot\varepsilon u^{-1+2\varepsilon p}\,du
\leqsim
\varepsilon\int_0^{1}u^{2p-1+2p\e}\,du
\leq\varepsilon\int_0^{1}u^{2p-1}\,du.
$$
Obviously, the expression on the right-hand side converges to zero as $\varepsilon\rightarrow0$.
\end{proof}

\subsection{Proof of (\ref{Vest})}
\label{Cessna}
As in the case of $\cU$, we start with a pointwise estimate of the integrand of $\cV$ (that is, $V$).

\begin{proposition}
\label{Prokofiev - Sonata no. 6}
Let $V$ be as in \eqref{T2}.
For
$a>0$ and
$\xi\in(0,\pi/2)$
we have
\begin{equation*}
\label{Offertorium}
V(a,\cos\xi)
\,\leqsim\,
\frac\sigma{(\sin\xi)^{1+2\sigma}}
\,.
\end{equation*}
\end{proposition}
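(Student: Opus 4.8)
The plan is to collapse this estimate onto a single explicit, $a$-independent integral. Since $b=\cos\xi$ forces $1-b^2=\sin^2\xi$, the elementary identity
\[
s^2\mp 2bs+1=(s\mp b)^2+\sin^2\xi
\]
rewrites the two summands inside the integral defining $V(a,\cos\xi)$ in \eqref{T2} as shifted ``Poisson-type'' kernels $\big((s\mp b)^2+\sin^2\xi\big)^{-\sigma-1}$.

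Next I would dispose of the $(s+b)$-term in favour of the $(s-b)$-term: for $s\in\big(0,\sqrt{a^2+b^2}-b\big)$ one has $s,b>0$, hence $(s+b)^2>(s-b)^2$ and therefore $\big((s+b)^2+\sin^2\xi\big)^{-\sigma-1}\leq\big((s-b)^2+\sin^2\xi\big)^{-\sigma-1}$, at the cost of a harmless factor $2$. One may then enlarge the domain of integration from $\big(0,\sqrt{a^2+b^2}-b\big)$ to all of $\R$, which is legitimate because $2(\sigma+1)>1$ guarantees absolute convergence; after this step all dependence on $a$ has disappeared.

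The substitution $t=(s-b)/\sin\xi$ now pulls the factor $(\sin\xi)^{-1-2\sigma}$ out of the integral and leaves
\[
V(a,\cos\xi)\leq 4\sigma\,(\sin\xi)^{-1-2\sigma}\int_{-\infty}^{\infty}\frac{dt}{(1+t^2)^{\sigma+1}}\,.
\]
Finally, since $\sigma>0$ we have $(1+t^2)^{-\sigma-1}\leq(1+t^2)^{-1}$, so the remaining integral is at most $\int_{-\infty}^{\infty}(1+t^2)^{-1}\,dt=\pi$. This yields $V(a,\cos\xi)\leqsim\sigma\,(\sin\xi)^{-1-2\sigma}$ with an absolute implied constant, which is exactly the assertion.

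There is no genuine obstacle here: every step is a one-line manipulation. The only point deserving a moment's care is that the constant produced by the last integral must not depend on $\sigma$ (equivalently, on $p$ and $\varepsilon$), so that ``$\leqsim$'' is legitimate; the crude pointwise bound $(1+t^2)^{-\sigma-1}\leq(1+t^2)^{-1}$ settles this, and one could of course replace $\pi$ by the sharper $\sqrt{\pi}\,\Gamma(\sigma+1/2)/\Gamma(\sigma+1)$ if a better constant were ever needed.
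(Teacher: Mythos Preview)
Your proof is correct and follows essentially the same approach as the paper's: both complete the square to write $s^2\mp 2bs+1=(s\mp b)^2+\sin^2\xi$ and then dominate the $(s+b)$-summand by the $(s-b)$-summand. The only difference is in the final integration: the paper replaces $(s-b)^2+\sin^2\xi$ by the equivalent quantity $(|s-b|+\sin\xi)^2$ and then splits into the cases $c-b\leq b$ and $c-b\geq b$ to compute the resulting integral explicitly, whereas you simply extend the integration domain to all of $\mathbb{R}$, substitute $t=(s-b)/\sin\xi$, and bound $(1+t^2)^{-\sigma-1}\leq(1+t^2)^{-1}$. Your route is a little cleaner since it avoids the case split entirely, while the paper's computation has the minor advantage of actually producing a two-sided estimate $V(a,b)\sim\sigma(\sin\xi)^{-1-2\sigma}$ (only the upper bound is needed for the Proposition, however).
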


\begin{proof}
Denote $b=\cos\xi$.
As before, set $c=\sqrt{a^2+b^2}$.
Since the second summand in \eqref{T2} is positive and, at the same time, dominated by the first summand, we see that
$$
V(a,b)
\sim\sigma
\int_0^{c-b}\left[(s-b)^2+(1-b^2)\right]^{-1-\sigma}ds.
$$
From $(x+y)^2\sim x^2+y^2$ for $x,y>0$, we now get
$$
V(a,b)
\sim\sigma
\int_0^{c-b}\left(|s-b|+\sqrt{1-b^2}\right)^{-2\sigma-2}ds.
$$
At this point consider separately the cases $c-b\leq b$ and $c-b\geq b$. In both of these cases, the above integral can be explicitly calculated. This quickly leads to the desired conclusion.
\end{proof}

\begin{proof}[Proof of (\ref{Vest})]
Once again write $a=\sqrt{1/u^2-1}$.
For any $\psi\in(0,\pi/2)$ and $m\in\{1,\hdots,l(k)\}$ let $\xi_m=\xi_m(\psi,k,u,\sigma
)$ be defined in accordance with \eqref{EU}.
Here we decompose \eqref{V} as $\cV=\cV_1+\cV_{\text{sum}}$, where
\begin{align}
\label{Blue Morgan}
{\cV}_1(u)&=
-\frac{2}{k^2}\int_{0}^{\pi/2}
      V\left(\sqrt{1/u^2-1},\cos\xi_1\right)\sin\xi_1\cdot \psi\sin\psi \,d\psi\\
\label{Vsum}
{\cV}_{\text{sum}}(u)&=
\frac{2}{k^2}
      \int_{0}^{\pi/2}
\sum_{m=2}^{l(k)}
   (-1)^{m}
       V\left(\sqrt{1/u^2-1},\cos\xi_m\right)\sin\xi_m
       \cdot
       \psi\sin\psi \,d\psi.
\end{align}
(If $k=3$, we just leave $\cV=\cV_1$.)

\medskip
\noindent
\framebox{The ${\cV_1}$ part}.
\label{Omodaka Hikokuro}
From \eqref{EU} we get $\sin\xi_1\sim\xi_1\geq(\pi/2-\psi)/k$, which in combination with \eqref{Blue Morgan} and
Proposition \ref{Prokofiev - Sonata no. 6}
yields
$$
|\cV_{1}(u)|
\leqsim\frac{\sigma }{k^2}\int_0^{\pi/2}
\left(\frac{k}{\pi/2-\psi}\right)^{2\sigma}d\psi
\leqsim
\frac{\sigma }{(1-2\sigma)k^{2(1-\sigma)}}
\leqsim
\frac{\sigma }{k^{2(1-\sigma)}}.
$$
In the last inequality we used that $p\geq3$, see \eqref{kp3}, which translates into $1-2\sigma\sim1$.
Hence
$$
\limsup_{\varepsilon\searrow0}
\nor{\cV_{1}}_{L^p(\mu_\varepsilon)}
\leqsim\frac{1/p }{k^{2(1-1/p)}}\,.
$$
This is exactly the estimate we were trying to get, cf. \eqref{I want to be ninja} and \eqref{Vest}.

\smallskip
\noindent
\framebox{The ${\cV_{\text{sum}}}$ part}.
We want to estimate $\nor{\cV_{\text{sum}}}_{L^p(\mu_\varepsilon)}$.
Unlike before, we do not want to estimate each term separately. Instead, we will once again use alternation. The idea is simply that if $m\mapsto V(a,\cos\xi_m)\sin\xi_m$
is a {\it decreasing} function, then the modulus of the {\it alternating} sum from \eqref{Vsum} is majorized by the modulus of its first term
(i.e., the one corresponding to $m=2$) only.
So we would only have to estimate {\it one} term, instead of an increasingly large (in terms of the number of summands) sum.
In other words, we would immediately get
\begin{equation}
\label{Damietta}
|{\cV}_{\text{sum}}(u)|\leqsim
\frac{1}{k^2}
      \int_{0}^{\pi/2}
       V(a,\cos\xi_2)\sin\xi_2\,d\psi
       =:\widetilde{\cV}_{2}(u).
\end{equation}

Since $\xi_m$ increases with $m$, while the cosine decreases on $(0,\pi/2)$, it is enough
to show that the function $\Lambda: b\mapsto V(a,b)\sqrt{1-b^2}$ is increasing.
In fact it is; the next result makes that statement precise.

\begin{proposition}
\label{Sekigahara}
Fix $\sigma_0\in(0,1/2)$ and for a chosen $\sigma\in(\sigma_0/2,\sigma_0)$ let $V$ be as in \eqref{T2}.
For every $\omega\in(0,1)$ there exists $M=M(\sigma_0,\omega)>2$ such that for all $a>M$ and all $\sigma$ as above,
the function $X=X_{a}$, defined as
$$
X_a(b)=V(a,b)\sqrt{1-b^2},
$$
increases on
$(\omega,1)$.
\end{proposition}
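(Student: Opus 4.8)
The plan is to put $V(a,\cdot)$, and hence $X_a$, into an \emph{exact} closed form by a linear change of variables in \eqref{T2}, then to peel off an explicit, manifestly increasing ``main term'' and show that the remaining piece is uniformly small and decreasing once $a$ is large. Concretely, in the two summands of \eqref{T2} I would use $s^2\mp 2bs+1=(s\mp b)^2+(1-b^2)$ and substitute $s=\pm b+\sqrt{1-b^2}\,t$, respectively. Writing $c:=\sqrt{a^2+b^2}$ and introducing the odd, strictly increasing function $\Theta(x):=\int_0^x(1+t^2)^{-\sigma-1}\,dt$, a short computation, in which two copies of $\Theta\big(b/\sqrt{1-b^2}\big)$ cancel, should give, for $b\in(0,1)$ and with $\rho_1(b):=(c-2b)/\sqrt{1-b^2}$ and $\rho_2(b):=c/\sqrt{1-b^2}$,
\[
X_a(b)=\sqrt{1-b^2}\,V(a,b)=2\sigma\,(1-b^2)^{-\sigma}\big[\Theta(\rho_1(b))+\Theta(\rho_2(b))\big].
\]

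Next I would set $\Theta_\infty:=\Theta(+\infty)\in(0,\infty)$ and $\overline{\Theta}(x):=\Theta_\infty-\Theta(x)=\int_x^\infty(1+t^2)^{-\sigma-1}\,dt$, turning the above into
\[
X_a(b)=2\sigma\,g(b)\big[\,2\Theta_\infty-E_a(b)\,\big],\qquad g(b):=(1-b^2)^{-\sigma},\quad E_a(b):=\overline{\Theta}(\rho_1(b))+\overline{\Theta}(\rho_2(b)).
\]
Here $g$ is the ``ideal'' increasing profile and $E_a$ is a correction that will be shown to be both small and monotone in the favourable direction. Differentiating,
\[
X_a'(b)=2\sigma\big[\,g'(b)\big(2\Theta_\infty-E_a(b)\big)-g(b)\,E_a'(b)\,\big],
\]
so, since $g'>0$ on $(0,1)$, it suffices to establish $2\Theta_\infty-E_a(b)>0$ and $E_a'(b)\le 0$ on $(\omega,1)$.

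For the first inequality: as soon as $a>\sqrt{3}$ one has $c-2b=\sqrt{a^2+b^2}-2b>0$ throughout $(0,1)$, hence $\rho_1(b),\rho_2(b)>0$, hence $\overline{\Theta}(\rho_i(b))<\overline{\Theta}(0)=\Theta_\infty$, giving $E_a(b)<2\Theta_\infty$. For the second, a direct computation (using $c'=b/c$ and $1-b^2+c^2=1+a^2$) yields
\[
\rho_2'(b)=(1-b^2)^{-3/2}\,\tfrac{b(1+a^2)}{c}>0,\qquad \rho_1'(b)=(1-b^2)^{-3/2}\Big(\tfrac{b(1+a^2)}{c}-2\Big).
\]
Since $b\mapsto b(1+a^2)/\sqrt{a^2+b^2}$ is increasing on $(0,1)$, on $(\omega,1)$ it is bounded below by $\omega(1+a^2)/\sqrt{a^2+\omega^2}$, which exceeds $2$ once $a$ is larger than a suitable $M=M(\omega)>2$ (one may take, say, $M=3/\omega$). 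For such $a$ both $\rho_1'$ and $\rho_2'$ are positive on $(\omega,1)$, and since $\overline{\Theta}$ is strictly decreasing, $E_a'(b)=\overline{\Theta}'(\rho_1(b))\,\rho_1'(b)+\overline{\Theta}'(\rho_2(b))\,\rho_2'(b)<0$ there. Both bracketed terms in the formula for $X_a'$ are then positive, so $X_a$ increases on $(\omega,1)$. Taking $M(\sigma_0,\omega):=M(\omega)$ finishes the proof; note that this bound depends on $\omega$ only and the entire argument is uniform in $\sigma\in(0,1/2)$, so the restriction $\sigma\in(\sigma_0/2,\sigma_0)$ is not actually needed here.

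The main obstacle is really just the first step: one has to hit upon the substitution $s=\pm b+\sqrt{1-b^2}\,t$, which collapses \eqref{T2} onto the single antiderivative $\Theta$. Once that is available, the splitting $\Theta=\Theta_\infty-\overline{\Theta}$ separates the genuinely monotone profile $(1-b^2)^{-\sigma}$ from a small \emph{decreasing} remainder, and the monotonicity of $X_a$ follows almost immediately. By contrast, differentiating $V$ directly produces boundary and interior terms of competing signs whose combination with $-\,b\,V/(1-b^2)$ is considerably harder to control.
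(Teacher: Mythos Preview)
Your proof is correct and takes a genuinely different route from the paper's. The paper differentiates $V(a,b)$ directly and computes
\[
Y_a(b)=\frac{X_a'(b)\sqrt{1-b^2}}{2\sigma}
=\frac{1}{2\sigma}\Big((1-b^2)\,\partial_b V(a,b)-b\,V(a,b)\Big),
\]
obtaining an expression with boundary terms and an integral of mixed sign. After some manipulation this is reduced to showing an inequality of the type
\[
\int_0^{c-b}\big(\alpha_+^{-\sigma-2}+\alpha_-^{-\sigma-2}\big)(1-s^2)\,ds\ \geq\ Z_a(b)\,\alpha_-(c-b)^{-\sigma-1},
\]
which is then verified for large $a$ by a chain of elementary estimates (convexity of $x\mapsto x^{-\sigma-2}$, crude bounds on $Z_a$ and $\alpha_-(c-b)$, and a ``symmetrisation'' $s\leftrightarrow 1/s$ of the integral). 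The lower bound $\sigma>\sigma_0/2$ is used there to make the last integral independent of $\sigma$.

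Your substitution $s=\pm b+\sqrt{1-b^2}\,t$ collapses \eqref{T2} into the closed form
\[
X_a(b)=2\sigma\,(1-b^2)^{-\sigma}\big[\Theta(\rho_1(b))+\Theta(\rho_2(b))\big],
\]
after which the monotonicity is essentially forced: the factor $(1-b^2)^{-\sigma}$ is increasing, and since $\overline\Theta$ is decreasing it only remains to check that $\rho_1,\rho_2$ are increasing on $(\omega,1)$, which is the elementary calculus you carry out. This is considerably shorter and more transparent than the paper's argument, yields an explicit $M=3/\omega$, and---as you observe---does not need the hypothesis $\sigma\in(\sigma_0/2,\sigma_0)$ at all: your argument is uniform over $\sigma\in(0,1/2)$. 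The paper's approach, by contrast, avoids having to discover the change of variables and proceeds by brute differentiation, at the cost of the longer estimate and the auxiliary $\sigma_0$-restriction.
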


\begin{proof}
It suffices
to show that
\begin{equation}
\label{sequentia}
Y_a(b)
:=\frac{X_a'(b)\sqrt{1-b^2}}{2\sigma}
=\frac{1}{2\sigma}
\left(
(1-b^2)\frac{\pd V}{\pd b}(a,b)-bV(a,b)
\right)
\end{equation}
is a {\it positive} function, under the conditions specified in the formulation of the Proposition.

Introduce the {\it ad hoc} notation
$$
\alpha_\pm(s)=s^2\pm 2bs+1.
$$
We will use that
\begin{equation}
\label{eq: travel dunk}
\frac{\pd}{\pd s}\alpha_\pm(s)^{-\sigma-1} =-2(1+\sigma)(s\pm b)\alpha_\pm(s)^{-\sigma-2}.
\end{equation}

Write $c=\sqrt{a^2+b^2}$.
From \eqref{sequentia}, \eqref{eq: travel dunk} and \eqref{T2} we
obtain, after some calculation,
$$
\aligned
Y_a(b)
&=
(1-b^2)\Big(\frac{b}{c}-1\Big)
\left[
\alpha_-(c-b)^{-\sigma-1}+\alpha_+(c-b)^{-\sigma-1}
\right]
\\
&\hskip 10pt
-\int_0^{c-b}
\frac{\pd}{\pd s}
\left[
(bs+1)\alpha_-(s)^{-\sigma-1}
+
(bs-1)\alpha_+(s)^{-\sigma-1}
\right]ds
\\
&\hskip 20pt
+2(1+\sigma)b
\int_0^{c-b}
\left[
\alpha_-(s)^{-\sigma-2}+\alpha_+(s)^{-\sigma-2}
\right](1-s^2)\,
ds.
\endaligned
$$
Introduce another piece of notation, namely,
$$
Z_a(b):=(c-b)(c-b+b^{-1})c^{-1}=c+1/b-\Big(2b+\frac{1-b^2}c\Big)\,,
$$
so as to proceed as
$$
\aligned
Y_a(b)
& =
2(1+\sigma)b
\int_0^{c-b}
\left[\alpha_+(s)^{-\sigma-2}+\alpha_-(s)^{-\sigma-2}\right](1-s^2)\,ds\\
&\hskip 25pt
-
\left[
bZ_a(b)+1
\right]
\alpha_-(c-b)^{-\sigma-1}
-
\left[
bZ_a(b)-1
\right]
\alpha_+(c-b)^{-\sigma-1}.
\endaligned
$$
If $a$ is large, then $bZ_a(b)-1=[(c-b)^2-1]b/c>0$.
Since also
$\alpha_-(c-b)^{-\sigma-1}\geq\alpha_+(c-b)^{-\sigma-1}$
and $1+\sigma\geq1$,
in order to prove $Y_a(b)\geq0$ it suffices to show that, for large $a$ and $b\in(\omega,1)$,
\begin{equation}
\label{Shadi Khries}
\int_0^{c-b}
\left(\alpha_+(s)^{-\sigma-2}+\alpha_-(s)^{-\sigma-2}\right)(1-s^2)\,ds
\geq
Z_a(b)
\alpha_-(c-b)^{-\sigma-1}.
\end{equation}
Because large $a$ implies $c-b>1$,
we may write the above integral as
$
\int_0^{1/(c-b)}+\int_{1/(c-b)}^1+\int_1^{c-b}.
$
By choosing $w=1/s$
in the $\int_1^{c-b}$ part
and observing that $\alpha_\pm(1/w)=\alpha_\pm(w)/w^2$,
we obtain
\begin{equation}
\label{positive integrand}
\aligned
\int_0^{c-b}
\frac{1-s^2}{\alpha_\pm(s)^{\sigma+2}}\,ds
& =
\int_0^{1/(c-b)}
\frac{1-s^2}{\alpha_\pm(s)^{\sigma+2}}\,ds
+\int_{1/(c-b)}^{1}
\frac{(1-s^2)(1-s^{2\sigma})}{\alpha_\pm(s)^{\sigma+2}}\,ds\\
&\geq
\int_{0}^{1}
\frac{(1-s^2)(1-s^{2\sigma})}{\alpha_\pm(s)^{\sigma+2}}\,ds
\geq
\int_{0}^{1}
\frac{(1-s^2)(1-s^{\sigma_0})}{\alpha_\pm(s)^{\sigma+2}}\,ds.
\endaligned
\end{equation}
For the last inequality we needed the assumption $0<\sigma_0/2<\sigma$.

Now we are ready for a simplification of \eqref{Shadi Khries}. On the left-hand side of \eqref{Shadi Khries}:
\begin{itemize}
\item
first apply \eqref{positive integrand} to the $\alpha_-,\alpha_+$ parts alike;
\item
then apply the convexity of $x\mapsto x^{-\sigma-2}$ which gives, for $s\in(0,1)$,
$$
\alpha_+(s)^{-\sigma-2}+\alpha_-(s)^{-\sigma-2}\geq2(1+s^2)^{-\sigma-2}\geq2^{-\sigma-1}\geq2^{-\sigma_0-1}.
$$
\end{itemize}

On the right-hand side of \eqref{Shadi Khries} use that:
\begin{itemize}
\item
$Z_a(b)\leq c+1/b\leq a\sqrt{2}+1/\omega$,
where we recalled that $c^2=a^2+b^2\leq a^2+1\leq 2a^2$;
\item
$\alpha_-(c-b)\geq(a-2)^2$ for large $a$, therefore $\alpha_-(c-b)^{-\sigma-1}\leq(a-2)^{-\sigma_0-2}$.
\end{itemize}
Putting all this together,
we see that
the inequality \eqref{Shadi Khries} would follow if we established
\begin{equation*}
2^{-\sigma_0-1}
\int_{0}^{1}
(1-s^2)(1-s^{\sigma_0})\,ds
\geq
\frac{a\sqrt{2}+1/\omega}{(a-2)^{2+\sigma_0}}.
\end{equation*}
Clearly, the left-hand side above is strictly positive and does {\it not} depend on $a$, while the right-hand side converges to zero as $a\rightarrow\infty$.
Of course, this means that the above inequality is fulfilled if $a>3$ is large enough.
\end{proof}

Let us now estimate $\nor{{\cV}_{\text{sum}}}_{L^p(\mu_\varepsilon)}$.
First note that, by \eqref{EU}, for $m\in\{2,\hdots,l(k)\}$ we have
\begin{equation}
\label{gaaaaaaa-gaaaaaaa}
\xi_m\in\left[\frac{\pi}{k},\Big(1-\frac1k\Big)\frac\pi2\right].
\end{equation}
By using Proposition \ref{Prokofiev - Sonata no. 6}
and \eqref{gaaaaaaa-gaaaaaaa} we obtain the estimate
\begin{equation}
\label{Yamaoka Misa}
V(a,\cos\xi_m)\sin\xi_m\leqsim
\sigma(\sin(\pi/k))^{-2\sigma}\leqsim \sigma k^{2\sigma}
\hskip 30pt
\forall m\in\{2,\hdots,l(k)\},
\end{equation}
hence by further majorizing $\sigma\leq1$ we obtain
\begin{equation}
\label{Carl Zeiss}
\nor{\cV_{\text{sum}}}_\infty\leqsim \frac1{k^2}\sum_{m=2}^{l(k)}k^{2\sigma}\leqsim k^{2\sigma-1}.
\end{equation}
From \eqref{gaaaaaaa-gaaaaaaa} we also get the uniform bound $\cos\xi_m\geq\sin(\pi/(2k))$.
This makes us choose $\omega=\sin(\pi/(2k))$, $\sigma_0=1/p$ and let $M=M(\omega,\sigma_0)$ be as in Proposition~\ref{Sekigahara}. Next choose $\nu=\nu(k,p)>0$ such that $u\in(0,\nu)$ implies $\sqrt{1/u^2-1}>M$. Recall that, by Proposition~\ref{Sekigahara},
for such $u$ the inequality \eqref{Damietta} holds. Therefore we may estimate
\begin{equation}
\label{Taka}
\nor{{\cV}_{\text{sum}}}_{L^p(\mu_\varepsilon)}
\leqsim \nor{{\widetilde\cV}_{2}\cdot\chi_{(0,\nu)}}_{L^p(\mu_\varepsilon)}+\nor{{\cV}_{\text{sum}}\cdot\chi_{(\nu,1)}}_{L^p(\mu_\varepsilon)}.
\end{equation}
In order to estimate $\nor{{\widetilde\cV}_{2}\cdot\chi_{(0,\nu)}}_{L^p(\mu_\varepsilon)}$ when $\varepsilon\rightarrow0$,
argue as in the case of $\nor{\cV_{1}}_{L^p(\mu_\varepsilon)}$, see page \pageref{Omodaka Hikokuro}.
Indeed, from \eqref{Damietta}
and \eqref{Yamaoka Misa} 
we deduce
$|{\widetilde\cV}_{2}(u)|
\leqsim
\sigma k^{-2(1-\sigma)}$
for all
$u\in(0,\nu)$.
Therefore
$$
\nor{{\widetilde\cV}_{2}\cdot\chi_{(0,\nu)}}_{L^p(\mu_\varepsilon)}
\leqsim
\frac{\sigma }{k^{2(1-\sigma)}}\nor{\chi_{(0,\nu)}}_{L^p(\mu_\varepsilon)}
\leq
\frac{\sigma }{k^{2(1-\sigma)}}\big(\mu_\varepsilon(0,1)\big)^{1/p}
\leqsim
\frac{\sigma }{k^{2(1-\sigma)}}.
$$

As for the last term in \eqref{Taka}, we use \eqref{Carl Zeiss}, which gives
$$
\nor{{\cV}_{\text{sum}}\cdot\chi_{(\nu,1)}}_{L^p(\mu_\varepsilon)}
\leqsim k^{2\sigma-1}
\bigg(
\int_{[\nu,1]}d\mu_\varepsilon
\bigg)^{1/p}\rightarrow0
\hskip 20pt
\text{ as }\varepsilon\rightarrow0.
$$
This proves that
\[
\limsup_{\varepsilon\searrow0}\nor{{\cV}_{\text{sum}}}_{L^p(\mu_\varepsilon)}
\leqsim \frac{1/p}{k^{2(1-1/p)}}.
\qedhere
\]
\end{proof}

\subsection{Proof of (\ref{West})}

Let $\psi\in(0,\pi/2)$.
By Lagrange's theorem we have, for $b=\sin(\psi/k)$,
\begin{equation}
\label{Shadowboxing}
\aligned
(s^2-2{b}s+1)^{-\sigma}-(s^2+2{b}s+1)^{-\sigma}
&= 4s\sin\frac\psi k\cdot \sigma\xi^{-\sigma-1}
\leqsim \frac{\sigma s}{k\xi^{\sigma+1}}
\endaligned
\end{equation}
for some
$
\xi=\xi(s)\in(s^2-2{b}s+1,s^2+2{b}s+1).
$
Note that
$\xi\geq \left(s-\sin(\psi/ k)\right)^2+\cos^2(\psi/ k).$
Together with \eqref{Scherbakov Symphony no.5} and \eqref{Shadowboxing}, this implies
$$
\aligned
H_\sigma^0\Big(a,\sin\frac\psi k\Big)
& \leqsim
\frac{\sigma}{k}
\int_0^{\sqrt{1/u^2-\cos^2(\psi/k)}-\sin(\psi/k)}
\frac{ds}{\big[\left(s-\sin(\psi/k)\right)^2+\cos^2(\psi/k)\big]^{\sigma+1}}\\
&\leqsim
\frac{\sigma}{k}
\left(\cos\frac\psi k\right)^{-1-2\sigma}\int_{-\tan(\psi/k)}^{\sqrt{1/(u\cos(\psi/k))^2-1}-2\tan(\psi/k)}
\frac{d\tau}{\left(\tau^2+1\right)^{\sigma+1}}.
\endaligned
$$
We simply estimate $\sigma+1>1$, therefore the above integral is majorized by $\int_\mathbb{R}\,d\tau/(\tau^2+1)=\pi$.
Since $k\geq3$, cf. \eqref{kp3}, the cosine term can be estimated as $\sim1$.
Hence we proved
$H_\sigma^0\big(a,\sin(\psi/ k)\big)
\leqsim
\sigma/k$.
This gives
$\cW(u)
\leqsim
\sigma/k^2$,
which proves \eqref{West}.

\section{The second proof of Theorem~\ref{t: Lowest}: Superpositions of dilated Gaussians}
\label{s: Vjeko's}

Our main result in this section is the following strengthened (in the sense of being completely explicit) version of Theorem~\ref{t: Lowest}:

\begin{theorem}
\label{t: LowerExact}
If $p\geq2\geq q$ are such that $1/p+1/q=1$, and $k\in\mathbb{N}$, then
\begin{equation*}
\nor{R^k}_p
\geq
\frac{\Gamma(1/p)\Gamma(1/q+k/2)}{\Gamma(1/q)\Gamma(1/p+k/2)}
\geq
\frac12 k^{1-2/p} (p-1).
\end{equation*}
\end{theorem}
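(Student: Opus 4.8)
The plan is to deduce the first inequality from the existence of a family of near-extremisers and to obtain the second by an elementary estimate of Gamma functions. Write $\lambda_k:=\Gamma(1/p)\Gamma(1/q+k/2)\big/\big(\Gamma(1/q)\Gamma(1/p+k/2)\big)$; the inequality $\lambda_k\geq\tfrac12 k^{1-2/p}(p-1)$ follows from standard estimates of $\Gamma$ (the reflection formula fixes the size of $\Gamma(1/p)/\Gamma(1-1/p)$, and a Gautschi/log-convexity bound handles $\Gamma(1/q+k/2)/\Gamma(1/p+k/2)$), so the real content is $\|R^k\|_p\geq\lambda_k$. The guiding heuristic is that $e^{ik\arg z}|z|^{-2/p}$ is an extremiser: since $R^k$ is the Fourier multiplier with symbol $\zeta\mapsto e^{-ik\arg\zeta}$, a single application of the classical formula for the Fourier transform of a circular harmonic times a power shows that $R^k$ sends $e^{ik\arg z}|z|^{-2/p}$ to a constant of modulus exactly $\lambda_k$ times $|z|^{-2/p}$.

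To make this rigorous I would realise $|z|^{-2/p}$ on the frequency side through the Gaussian subordination $|\zeta|^{-2/q}=\Gamma(1/q)^{-1}\int_0^\infty t^{1/q-1}e^{-t|\zeta|^2}\,dt$ and truncate the scales to a symmetric band, i.e.
$$
\widehat{f_\varepsilon}(\zeta):=e^{ik\arg\zeta}\int_\varepsilon^{1/\varepsilon}t^{1/q-1}e^{-t|\zeta|^2}\,dt ,\qquad \varepsilon\in(0,1).
$$
The reason for localising in frequency, and for the two-sided cutoff in particular, is that $|z|^{-2/p}$ is doubly logarithmically critical for $L^p(\mathbb{R}^2)$ (its $p$-th power is $|z|^{-2}$): a one-sided cutoff would not even leave $f_\varepsilon$ in $L^p$, whereas with the band $[\varepsilon,1/\varepsilon]$ both $\|f_\varepsilon\|_p$ and $\|R^kf_\varepsilon\|_p$ diverge only like $(\log\tfrac1\varepsilon)^{1/p}$, slowly enough that their ratio is dictated by the limiting homogeneous profiles rather than by the truncation regions.

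For $R^kf_\varepsilon$ the modulation cancels: $\widehat{R^kf_\varepsilon}(\zeta)=\int_\varepsilon^{1/\varepsilon}t^{1/q-1}e^{-t|\zeta|^2}\,dt$ is radial, so $R^kf_\varepsilon(z)=\pi\int_\varepsilon^{1/\varepsilon}t^{1/q-2}e^{-\pi^2|z|^2/t}\,dt$ is an explicit superposition of Gaussians; passing to polar coordinates, substituting, and peeling off the logarithmically divergent part of the resulting one-parameter integral (in which the lower incomplete Gamma function appears) gives $\|R^kf_\varepsilon\|_p^p=2\pi\big(\pi^{2/q-1}\Gamma(1/p)\big)^p\log\tfrac1\varepsilon+O(1)$. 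For $f_\varepsilon$ itself one evaluates $\mathcal{F}^{-1}\big[e^{ik\arg\zeta}e^{-t|\zeta|^2}\big]$ through the Weber-type integral $\int_0^\infty\rho\,e^{-t\rho^2}J_k(2\pi r\rho)\,d\rho$, which equals a positive constant times $r^k t^{-k/2-1}\,{}_1F_1(\tfrac k2+1;k+1;-\pi^2r^2/t)$; since $0<\tfrac k2+1<k+1$ the confluent hypergeometric factor is positive (Euler integral representation), hence $|f_\varepsilon|$ is again radial and explicit, and the same scaling computation — now using the Mellin transform $\int_0^\infty u^{\beta-1}{}_1F_1(a;c;-u)\,du=\Gamma(c)\Gamma(\beta)\Gamma(a-\beta)\big/\big(\Gamma(a)\Gamma(c-\beta)\big)$ to identify the limiting profile — yields $\|f_\varepsilon\|_p^p=2\pi\big(\pi^{2/q-1}\Gamma(1/p)/\lambda_k\big)^p\log\tfrac1\varepsilon+O(1)$. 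Dividing and letting $\varepsilon\searrow0$ gives $\|R^k\|_p\geq\lambda_k$, and the argument is oblivious to the parity of $k$.

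I expect the main obstacle to be getting the regularisation exactly right: the $L^p$-mass of the near-extremiser must diverge only logarithmically, so that it is asymptotically carried by the region where $R^kf_\varepsilon$ and $f_\varepsilon$ have already relaxed to their limiting homogeneous profiles $\propto|z|^{-2/p}$; this is precisely what forces the exponent $1/q-1$ and the symmetric band $[\varepsilon,1/\varepsilon]$. A naive radial regularisation — regularising the radial function $|z|^{-2/q}$ instead — produces a fixed norm ratio that is strictly below $\lambda_k$. Once the construction is pinned down, everything reduces to the two parallel (and essentially routine) asymptotic evaluations of one-parameter integrals above, together with the final Gamma-function inequality.
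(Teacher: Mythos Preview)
Your proposal is correct and, at the structural level, coincides with the paper's Section~6 proof: the same extremising family is built as a truncated Gaussian superposition on the Fourier side, $\widehat{f_\varepsilon}(\zeta)=(\zeta/|\zeta|)^k\int_\varepsilon^{1/\varepsilon}(\cdots)e^{-(\cdots)|\zeta|^2}$, the image $R^kf_\varepsilon$ is radial and easy, and the ratio of $L^p$ norms is computed asymptotically as $\varepsilon\searrow0$ to extract $\lambda_k$.

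The genuine difference is in how you invert the Fourier transform of $f_\varepsilon$ and identify the constant. The paper passes to polar coordinates by hand, reduces to the oscillatory integral $I_{k,\alpha}=\int_0^{\pi/2}\sin k\vartheta\,(\sin\vartheta)^{-2\alpha}\,d\vartheta$, and evaluates it via an integration-by-parts recursion in $k$; this computation explicitly uses that $k$ is odd, so the even case is imported from earlier work. Your route through the Weber integral $\int_0^\infty\rho\,e^{-t\rho^2}J_k(2\pi r\rho)\,d\rho$, the Euler representation of ${}_1F_1(k/2+1;k+1;\cdot)$ (which gives positivity and hence radiality of $|f_\varepsilon|$), and the Mellin identity for ${}_1F_1$ is a cleaner special-function package that is manifestly parity-free. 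The paper eventually obtains a parity-free argument too, but only in its third proof (Section~7) via Riesz potentials and a completely different mechanism; your variant achieves this already within the Gaussian-superposition framework. What the paper's hands-on approach buys is that it avoids quoting identities for $J_k$, ${}_1F_1$ and its Mellin transform, keeping the argument entirely self-contained.

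One caveat: your dismissal of the second inequality $\lambda_k\geq\tfrac12 k^{1-2/p}(p-1)$ as ``reflection formula plus Gautschi'' is a bit glib. The paper (Proposition~6.7) proves it by showing that $x\mapsto\Gamma(x+k/2)/(\Gamma(x+1)k^x)$ is decreasing on $[0,1]$ via a digamma computation, which is elementary but not a one-liner; the sharp constant $1/2$ comes from the endpoint ratio $\varphi_k(1)/\varphi_k(0)$. Your sketch would need to be fleshed out to reach the constant $1/2$ rather than just the order of magnitude.
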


We use that $R$ is a Fourier multiplier with symbol \eqref{eq: Four symbol}.
Consequently, for Schwartz functions $f$ we have
\begin{equation}
\label{eq: R-multiplier}
\widehat{R^k f}(\zeta) = \Big(\frac{\overline{\zeta}}{|\zeta|}\Big)^k \widehat{f}(\zeta)\,.
\end{equation}
Fix $2<p<\infty$ and an odd positive integer $k$.
For each $0<\varepsilon<1$ we are about to define functions $f_{k,p,\varepsilon},g_{p,\varepsilon}\in{L}^2(\mathbb{C})$ such that
\begin{equation}\label{eq:Htranfg}
{R}^k f_{k,p,\varepsilon} = g_{p,\varepsilon}
\end{equation}
and that the ratio $\|g_{p,\varepsilon}\|_{{L}^p(\mathbb{C})}/\|f_{k,p,\varepsilon}\|_{{L}^p(\mathbb{C})}$
is bounded from below by the above quotient of four gamma-functions
in the limit as $\varepsilon\downarrow0$.
We define $g_{p,\varepsilon}$ and $f_{k,p,\varepsilon}$ via their Fourier transforms as
\[ \widehat{g_{p,\varepsilon}}(\zeta) = \int_{\varepsilon}^{1/\varepsilon} e^{-\pi t^2 |\zeta|^2} t^{1-2/p}\,{d}t; \quad \zeta\in\mathbb{C} \]
and
\[ \widehat{f_{k,p,\varepsilon}}(\zeta) = \Big(\frac{\zeta}{|\zeta|}\Big)^k \int_{\varepsilon}^{1/\varepsilon} e^{-\pi t^2 |\zeta|^2} t^{1-2/p}\,{d}t; \quad \zeta\in\mathbb{C}, \]
respectively.
Since
$(\overline{\zeta}/|\zeta|)^k \widehat{f_{k,p,\varepsilon}}(\zeta) = \widehat{g_{p,\varepsilon}}(\zeta)$,
we clearly have \eqref{eq:Htranfg}.

The normalization that we use for the Fourier transform of $f\in{L}^1(\mathbb{R}^d)$ is
\[ \hat{f}(\xi) := \int_{\mathbb{R}^d} f(x) e^{-2\pi i x\cdot\xi} \,{d}A(x); \quad \xi\in\mathbb{R}^d, \]
so that the inverse Fourier transform of $h\in{L}^1(\mathbb{R}^d)$ is given by
\[ \check{h}(x) := \int_{\mathbb{R}^d} h(\xi) e^{2\pi i x\cdot\xi} \,{d}A(\xi); \quad x\in\mathbb{R}^d. \]

The functions $f_{k,p,\varepsilon}$ and $g_{p,\varepsilon}$ essentially approximate constant multiples of $(z/|z|)^k |z|^{-2/p}$ and $|z|^{-2/p}$, respectively.
For instance, pointwise we have
\begin{equation*}
\lim_{\varepsilon\searrow0} g_{p,\varepsilon}(z) = \frac{\Gamma(1/p)}{2\pi^{1/p}} |z|^{-2/p}; \quad z\in\mathbb{C}\setminus\{0\},
\end{equation*}
which will clearly follow from Lemma~\ref{lm:gs} below and the definition of the gamma-function:
\begin{equation}\label{eq:defofgammafn}
\int_{0}^{\infty} x^{s-1} e^{-x} \,{d}x = \Gamma(s)
\end{equation}
for $s>0$.
This observation relates the present approach to the approach from Sections~\ref{s: baba brez maske}--\ref{Zemi ogin}, as the approximate extremizers are similarly behaved.
However, the underlying idea is somewhat different: we wanted to tailor the approximate extremizers to the multiplier symbol.
The present approach is potentially applicable to other Fourier multipliers, even when the kernel or the extremizing sequence cannot be computed explicitly.

The difficulty is now shifted to estimating the ${L}^p$-norms of functions $f_{k,p,\varepsilon}$ and $g_{p,\varepsilon}$.
For this purpose we will find convenient that their frequency representations are superpositions of dilated Gaussians, which will enable us to invert the Fourier transform and compute these functions somewhat explicitly.

We first handle $g_{p,\varepsilon}$, because it is easier.

\begin{lemma}\label{lm:gs}
For $1<p<\infty$ and $z\in\mathbb{C}\setminus\{0\}$ we have
\[ g_{p,\varepsilon}(z) = \frac{1}{2} \pi^{-1/p} |z|^{-2/p} \int_{\pi\varepsilon^2|z|^2}^{\pi\varepsilon^{-2}|z|^2} x^{1/p-1} e^{-x} \,{d}x . \]
\end{lemma}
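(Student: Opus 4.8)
The statement is essentially an unravelling of the definition of $\widehat{g_{p,\varepsilon}}$ followed by inverting the Fourier transform term by term and a single change of variables, so the plan is direct. First I would recall that $g_{p,\varepsilon}$ is defined as the inverse Fourier transform of $\zeta\mapsto\int_\varepsilon^{1/\varepsilon}e^{-\pi t^2|\zeta|^2}t^{1-2/p}\,dt$. Since this frequency-side function is a genuine superposition (over $t\in[\varepsilon,1/\varepsilon]$) of the Gaussians $\zeta\mapsto e^{-\pi t^2|\zeta|^2}$, and since on $\mathbb{R}^2$ one has the classical self-duality relation $\bigl(e^{-\pi t^2|\cdot|^2}\bigr)^{\vee}(z)=t^{-2}e^{-\pi|z|^2/t^2}$ (which I would state as a known fact, e.g. from the standard normalization of the Fourier transform fixed just above in the excerpt), I can apply the inverse Fourier transform under the $t$-integral. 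The interchange of $\int_{\mathbb{R}^2}d\zeta$ with $\int_\varepsilon^{1/\varepsilon}dt$ is justified by Fubini, because the integrand is jointly continuous and absolutely integrable over the compact $t$-range $[\varepsilon,1/\varepsilon]$ (the Gaussian decay in $\zeta$ is uniform there). This yields
\[
g_{p,\varepsilon}(z)=\int_{\varepsilon}^{1/\varepsilon} t^{-2}e^{-\pi|z|^2/t^2}\,t^{1-2/p}\,dt
=\int_{\varepsilon}^{1/\varepsilon} t^{-1-2/p}e^{-\pi|z|^2/t^2}\,dt .
\]

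The remaining step is the substitution that turns this into the incomplete gamma integral in the statement. Taking $x=\pi|z|^2/t^2$, so that $t=\sqrt{\pi}|z|x^{-1/2}$ and $dt=-\tfrac12\sqrt{\pi}|z|x^{-3/2}\,dx$, one checks that $t^{-1-2/p}=(\sqrt\pi|z|)^{-1-2/p}x^{(1+2/p)/2}=\pi^{-1/2-1/p}|z|^{-1-2/p}x^{1/2+1/p}$, and multiplying by $|dt|=\tfrac12\sqrt\pi|z|x^{-3/2}\,dx$ gives $\tfrac12\pi^{-1/p}|z|^{-2/p}x^{1/p-1}\,dx$. The limits transform as $t=\varepsilon\mapsto x=\pi\varepsilon^{-2}|z|^2$ and $t=1/\varepsilon\mapsto x=\pi\varepsilon^{2}|z|^2$, and the orientation reversal cancels the minus sign, producing exactly
\[
g_{p,\varepsilon}(z)=\tfrac12\pi^{-1/p}|z|^{-2/p}\int_{\pi\varepsilon^2|z|^2}^{\pi\varepsilon^{-2}|z|^2}x^{1/p-1}e^{-x}\,dx,
\]
as claimed. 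Note $z\neq0$ is needed only so that the substitution is well-defined and the factor $|z|^{-2/p}$ makes sense; for $z=0$ the original integral simply diverges.

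I do not expect any genuine obstacle here — the lemma is a computational identity. The only points requiring a word of care are: (i) recording the correct inverse-Fourier-transform formula for a Gaussian on $\mathbb{R}^2$ under the chosen $e^{-2\pi i x\cdot\xi}$ normalization, since the power of $t$ appearing as the Jacobian of the dilation ($t^{-2}$ in dimension $2$) is exactly what makes the final exponent $1/p-1$ come out right; and (ii) justifying the Fubini interchange, which is painless because the $t$-integration is over a compact interval bounded away from $0$ and $\infty$. One could alternatively avoid Fubini altogether by first observing (via dominated convergence) that $g_{p,\varepsilon}$ is continuous and given pointwise by the convergent integral $\int_\varepsilon^{1/\varepsilon}(\cdot)$, but invoking Fubini is cleaner. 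Everything else is the bookkeeping of a single substitution.
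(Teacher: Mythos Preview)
Your proposal is correct and follows essentially the same approach as the paper: invert the Fourier transform of the dilated Gaussians using the standard formula, superpose to obtain $g_{p,\varepsilon}(z)=\int_{\varepsilon}^{1/\varepsilon} t^{-1-2/p}e^{-\pi|z|^2/t^2}\,dt$, and then substitute $x=\pi|z|^2/t^2$. The paper's proof is terser but identical in content.
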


\begin{proof}
It is a well-known fact (see \cite[Chapter~1,\ Theorem~1.13]{SW71:fa}) that, for any $t>0$, the Fourier transform of
$t^{-2}e^{-\pi t^{-2} |\cdot|^2}$, seen as a function on $\mathbb{C}=\mathbb{R}^2$,
is the function
$e^{-\pi t^{2} |\cdot|^2}$.
Superposing we get the following formula for $g_{p,\varepsilon}$:
\[ g_{p,\varepsilon}(z) = \int_{\varepsilon}^{1/\varepsilon} e^{-\pi t^{-2} |z|^2} \frac{{d}t}{t^{1+2/p}}; \quad z\in\mathbb{C}. \]
A change of variables $x=\pi|z|^2/t^2$ finishes the proof.
\end{proof}

For $0<\varepsilon<1$ define the annulus $A_\varepsilon\subset\mathbb{C}$ as
the region
$$
A_\varepsilon:=\mn{z\in\mathbb{C}}{\varepsilon\leq |z|\leq 1/\varepsilon}.
$$
We will frequently use the following elementary facts:
\begin{equation}
\label{eq: powersLp}
\aligned
\big\|\,|z|^\alpha\,\big\|_{L^p(A_\varepsilon)} & \leq\left(\frac{2\pi}{|2+\alpha p|}\varepsilon^{-|2+\alpha p|}\right)^{1/p} & \text{ if }\alpha\in\mathbb{R}\backslash\{-2/p\}\\
\big\|\,|z|^{-2/p}\,\big\|_{L^p(A_\varepsilon)} & =\left(4\pi\log\frac1\varepsilon\right)^{1/p}\,.
\endaligned
\end{equation}

\begin{notation}
\label{n: Cp}
In Lemma~\ref{lm:gsnorm} below and its proof, $C_p$ will stand for a constant in $\mathbb{R}$
depending on $p$ only, and
whose value may change from one
appearance
to another.
\end{notation}

\begin{lemma}
\label{lm:gsnorm}
For $2<p<\infty$
and some constant $C_p\in\mathbb{R}$
we have
\[ \|g_{p,\varepsilon}\|_{{L}^p(\mathbb{C})} \geq
2^{-(1-2/p)}
\Gamma
(1/p)
\Big(\log\frac{1}{\varepsilon}\Big)^{1/p} +
C_p.\]
\end{lemma}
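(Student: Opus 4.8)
\textbf{Proof proposal for Lemma~\ref{lm:gsnorm}.}
The plan is to start from the explicit formula for $g_{p,\varepsilon}$ provided by Lemma~\ref{lm:gs}, namely
\[
g_{p,\varepsilon}(z) = \frac{1}{2}\pi^{-1/p}|z|^{-2/p}\int_{\pi\varepsilon^2|z|^2}^{\pi\varepsilon^{-2}|z|^2} x^{1/p-1}e^{-x}\,dx,
\]
and to compare the truncated gamma-integral with the full one $\Gamma(1/p)$ from \eqref{eq:defofgammafn}. The key observation is that the incomplete integral is close to $\Gamma(1/p)$ precisely when the integration interval $[\pi\varepsilon^2|z|^2,\pi\varepsilon^{-2}|z|^2]$ is large and well-centered, which happens for $|z|$ of moderate size; since we only need a lower bound, it suffices to restrict attention to a sub-annulus where this holds. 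Concretely I would fix a small auxiliary parameter and restrict $z$ to the annulus $\{|z|\in[\varepsilon^{1/2},\varepsilon^{-1/2}]\}$ (or any comparable shrinking of $A_\varepsilon$ by a subpolynomial amount): there the lower endpoint $\pi\varepsilon^2|z|^2$ is at most $\pi\varepsilon$ and the upper endpoint $\pi\varepsilon^{-2}|z|^2$ is at least $\pi\varepsilon^{-1}$, so the truncated integral is $\Gamma(1/p) - O(\varepsilon^{1/p})$, with the error term coming from $\int_0^{\pi\varepsilon}x^{1/p-1}e^{-x}\,dx \leq \int_0^{\pi\varepsilon}x^{1/p-1}\,dx \lesssim_p \varepsilon^{1/p}$ near zero and an exponentially small tail $\int_{\pi\varepsilon^{-1}}^\infty x^{1/p-1}e^{-x}\,dx$ near infinity.

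Next I would pass to $L^p$ norms. On that sub-annulus we obtain the pointwise lower bound
\[
|g_{p,\varepsilon}(z)| \geq \frac{1}{2}\pi^{-1/p}\big(\Gamma(1/p)-C_p\varepsilon^{1/p}\big)\,|z|^{-2/p},
\]
so by monotonicity of the $L^p$ norm under restriction of the domain,
\[
\|g_{p,\varepsilon}\|_{L^p(\mathbb{C})} \geq \frac{1}{2}\pi^{-1/p}\big(\Gamma(1/p)-C_p\varepsilon^{1/p}\big)\,\big\|\,|z|^{-2/p}\,\big\|_{L^p(\{\varepsilon^{1/2}\leq|z|\leq\varepsilon^{-1/2}\})}.
\]
Using the second identity in \eqref{eq: powersLp} (rescaled to the annulus $\{\varepsilon^{1/2}\leq|z|\leq\varepsilon^{-1/2}\}$, which simply replaces $\log(1/\varepsilon)$ by $\log(1/\varepsilon^{1/2}) = \tfrac12\log(1/\varepsilon)$), we get
\[
\big\|\,|z|^{-2/p}\,\big\|_{L^p(\{\varepsilon^{1/2}\leq|z|\leq\varepsilon^{-1/2}\})} = \Big(4\pi\cdot\tfrac12\log\tfrac1\varepsilon\Big)^{1/p} = (2\pi)^{1/p}\Big(\log\tfrac1\varepsilon\Big)^{1/p}.
\]
Multiplying out, the factors of $\pi^{-1/p}$ and $(2\pi)^{1/p}$ combine to $2^{1/p}$, and together with the leading $\tfrac12 = 2^{-1}$ this produces the prefactor $2^{-1+1/p} = 2^{-(1-2/p)}\cdot 2^{-1/p}$... one must be slightly careful with the bookkeeping of powers of $2$, but the point is that the leading term is $2^{-(1-2/p)}\Gamma(1/p)(\log(1/\varepsilon))^{1/p}$ up to the stated additive constant $C_p$, since the error $C_p\varepsilon^{1/p}(\log(1/\varepsilon))^{1/p}\to 0$ as $\varepsilon\downarrow0$ and hence is absorbed into $C_p$ for all small $\varepsilon$ (and bounded for $\varepsilon$ away from $0$).

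I do not anticipate a genuine obstacle here; this is an elementary estimate. The only point requiring mild care is the choice of the shrunken annulus: it must shrink slowly enough that $\log(1/\varepsilon)$ is only affected by a bounded multiplicative factor (so that the $(\log(1/\varepsilon))^{1/p}$ growth rate is preserved, which is essential since this is exactly the term that will later be matched against $\|f_{k,p,\varepsilon}\|_p$), yet fast enough that the incomplete gamma-integral converges to $\Gamma(1/p)$ at a quantitative rate. Any choice of the form $\{|z|\in[\varepsilon^{\theta},\varepsilon^{-\theta}]\}$ with $\theta\in(0,1)$ fixed works and leaves the constant in front of $(\log(1/\varepsilon))^{1/p}$ unchanged, since it only rescales $\log(1/\varepsilon)$ by $\theta$ inside a term that is itself negligible once one tracks that the $\Gamma(1/p)$-coefficient depends only on the limiting behavior. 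A cleaner route, which avoids even this bookkeeping, is to not shrink the annulus at all but instead bound the truncated gamma-integral below by $\Gamma(1/p) - \int_0^{\pi\varepsilon^2|z|^2}x^{1/p-1}\,dx - (\text{tail})$ directly on all of $A_\varepsilon$ and then split the resulting $L^p(A_\varepsilon)$ norm; I would present whichever turns out to keep the constant $2^{-(1-2/p)}$ most transparent.
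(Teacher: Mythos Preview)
Your ``cleaner route'' at the end---working on the full annulus $A_\varepsilon=\{\varepsilon\leq|z|\leq 1/\varepsilon\}$, bounding the incomplete gamma integral pointwise by $\Gamma(1/p)$ minus explicit head and tail pieces, and then applying \eqref{eq: powersLp} term by term---is exactly what the paper does, and it works. The head is $\lesssim_p(\varepsilon|z|)^{2/p}$ and the tail is $\leq(\varepsilon/|z|)^{2-2/p}$; on $A_\varepsilon$ their $L^p$ norms are $O_p(1)$ by the first line of \eqref{eq: powersLp}, while the main term $\frac12\pi^{-1/p}\Gamma(1/p)|z|^{-2/p}$ contributes, via the second line of \eqref{eq: powersLp}, precisely $\frac12\pi^{-1/p}\Gamma(1/p)(4\pi\log(1/\varepsilon))^{1/p}=2^{-(1-2/p)}\Gamma(1/p)(\log(1/\varepsilon))^{1/p}$.

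Your primary approach, however, does \emph{not} give the stated constant, and your parenthetical claim that shrinking the annulus ``leaves the constant in front of $(\log(1/\varepsilon))^{1/p}$ unchanged'' is wrong. On $\{\varepsilon^\theta\leq|z|\leq\varepsilon^{-\theta}\}$ one has
\[
\big\||z|^{-2/p}\big\|_{L^p}=\big(4\pi\theta\log(1/\varepsilon)\big)^{1/p},
\]
so the leading coefficient becomes $2^{-(1-2/p)}\theta^{1/p}$, strictly smaller than $2^{-(1-2/p)}$ for any fixed $\theta<1$. The shortfall $(1-\theta^{1/p})\,2^{-(1-2/p)}\Gamma(1/p)(\log(1/\varepsilon))^{1/p}$ tends to infinity and cannot be absorbed into an $\varepsilon$-independent additive constant $C_p$. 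Since this exact coefficient is what later gets divided by the matching coefficient in Lemma~\ref{lm:fsnorm} to produce the precise ratio in Theorem~\ref{t: LowerExact}, you must use the full annulus (or let $\theta\uparrow1$ with $\varepsilon$, which is unnecessarily awkward). Just commit to the ``cleaner route''.
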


In fact, we have an equality above (see the proof of Lemma~\ref{lm:fsnorm}), but we do not need the reversed inequality, so we are shortening the proof by a few lines.

\begin{proof}
Choose $z\in\mathbb{C}\backslash\{0\}$.
From Lemma~\ref{lm:gs} and \eqref{eq:defofgammafn} we get
\[ g_{p,\varepsilon}(z) \geq \frac{1}{2} \pi^{-1/p} \Gamma\Big(\frac{1}{p}\Big) |z|^{-2/p} - \frac{1}{2} \pi^{-1/p} |z|^{-2/p} \Big(\int_{0}^{\pi\varepsilon^2|z|^2} + \int_{\pi\varepsilon^{-2}|z|^2}^{\infty}\Big) \,x^{1/p-1} e^{-x} \,{d}x . \]
For the second (i.e., error) term we estimate
\begin{equation*}
\int_{0}^{\pi \varepsilon^2 |z|^2} x^{1/p-1} e^{-x} \,{d}x
\leq \int_{0}^{\pi \varepsilon^2 |z|^2} x^{1/p-1} \,{d}x
\lesssim_{p} (\varepsilon |z|)^{2/p}
\end{equation*}
and
\[ \int_{\pi\varepsilon^{-2}|z|^2}^{\infty} x^{1/p-1} e^{-x} \,{d}x
\leq (\varepsilon/|z|)^{2-2/p} \int_{\pi\varepsilon^{-2}|z|^2}^{\infty} e^{-x} \,{d}x
\leq (\varepsilon/|z|)^{2-2/p} . \]
All this gives, recalling the above convention (Notation \ref{n: Cp}),
\begin{equation}
\label{eq: pointwise lower g}
g_{p,\varepsilon}(z) \geq \frac{1}{2} \pi^{-1/p} \Gamma\Big(\frac{1}{p}\Big) |z|^{-2/p}
+
C_p\varepsilon^{2/p}
+
C_p\varepsilon^{2-2/p}|z|^{-2}.
\end{equation}
It remains to
trivially estimate $\|g_{p,\varepsilon}\|_{{L}^p(\mathbb{C})}\geq\|g_{p,\varepsilon}\|_{{L}^p(A_\varepsilon)}$ and for the latter
apply \eqref{eq: powersLp} to \eqref{eq: pointwise lower g}.
This completes the proof.
\end{proof}

Now we turn to $f_{k,p,\varepsilon}$.

\begin{lemma}
\label{lm:fs}
For $1<p<\infty$, odd $k\in\mathbb{N}$, and $z\in\mathbb{C}\setminus\{0\}$ we have
\[ f_{k,p,\varepsilon}(z)
= i^k \pi^{-1/p-1/2} \Big(\frac{z}{|z|}\Big)^k |z|^{-2/p} \int_{0}^{\pi/2} \Big( \int_{\pi (\sin\vartheta)^2 \varepsilon^2 |z|^2}^{\pi (\sin\vartheta)^2 \varepsilon^{-2} |z|^2} x^{1/p-1/2} e^{-x} \,{d}x \Big) \,\frac{\sin k\vartheta \,{d}\vartheta}{(\sin\vartheta)^{2/p}} . \]
\end{lemma}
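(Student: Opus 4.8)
The plan is to invert the Fourier transform of $f_{k,p,\varepsilon}$ by hand, using that $\widehat{f_{k,p,\varepsilon}}$ is a superposition of the ``twisted Gaussians'' $\zeta\mapsto(\zeta/|\zeta|)^k e^{-\pi t^2|\zeta|^2}$ over $t\in[\varepsilon,1/\varepsilon]$. Since $\widehat{f_{k,p,\varepsilon}}\in L^1(\mathbb{C})$ (it is dominated by $C_\varepsilon e^{-\pi\varepsilon^2|\zeta|^2}$), Fourier inversion holds pointwise, and Fubini gives $f_{k,p,\varepsilon}(z)=\int_\varepsilon^{1/\varepsilon}t^{1-2/p}N_t(z)\,{d}t$, where $N_t(z)$ is the inverse transform of a single twisted Gaussian. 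To evaluate $N_t$, I would pass to polar coordinates $\zeta=\rho e^{i\omega}$ and write $z=|z|e^{i\phi}$: the substitution $\omega=\phi+\vartheta$ factors out the harmonic $(z/|z|)^k$ and leaves the angular integral $\int_0^{2\pi}e^{ik\vartheta}e^{2\pi i|z|\rho\cos\vartheta}\,{d}\vartheta=2\pi i^k J_k(2\pi|z|\rho)$, so that $N_t(z)=2\pi i^k(z/|z|)^k\int_0^\infty\rho e^{-\pi t^2\rho^2}J_k(2\pi|z|\rho)\,{d}\rho$. Everything then comes down to a Hankel transform of a Gaussian.

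The crux is the identity, valid for \emph{odd} $k$,
\[ 2\pi\int_0^\infty\rho e^{-\pi t^2\rho^2}J_k(2\pi r\rho)\,{d}\rho \;=\; \frac{2r}{t^3}\int_0^{\pi/2}\sin k\vartheta\,\sin\vartheta\;e^{-\pi r^2(\sin\vartheta)^2/t^2}\,{d}\vartheta . \]
I would prove it by inserting the classical representation $J_k(x)=\frac1\pi\int_0^\pi\cos(k\vartheta-x\sin\vartheta)\,{d}\vartheta$ (for integer $k$), expanding $\cos(k\vartheta-2\pi r\rho\sin\vartheta)=\cos k\vartheta\cos(2\pi r\rho\sin\vartheta)+\sin k\vartheta\sin(2\pi r\rho\sin\vartheta)$, and interchanging the $\rho$- and $\vartheta$-integrations (legitimate by absolute convergence). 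The inner Gaussian integrals are elementary: $\int_0^\infty e^{-\pi t^2\rho^2}\cos(a\rho)\,{d}\rho=(2t)^{-1}e^{-a^2/(4\pi t^2)}$ by the Gaussian Fourier transform, hence (differentiating in $a$) $\int_0^\infty\rho e^{-\pi t^2\rho^2}\sin(a\rho)\,{d}\rho=\frac{a}{4\pi t^3}e^{-a^2/(4\pi t^2)}$, while integration by parts gives $\int_0^\infty\rho e^{-\pi t^2\rho^2}\cos(a\rho)\,{d}\rho=\frac1{2\pi t^2}-\frac{a}{2\pi t^2}\int_0^\infty e^{-\pi t^2\rho^2}\sin(a\rho)\,{d}\rho$. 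This is the one place where oddness of $k$ enters, and I expect it to be the main obstacle: the $\cos k\vartheta$-contribution --- which a priori involves the non-elementary Dawson-type function $a\mapsto\int_0^\infty e^{-\pi t^2\rho^2}\sin(a\rho)\,{d}\rho$ --- vanishes upon integrating over $[0,\pi]$ because $\cos k(\pi-\vartheta)=-\cos k\vartheta$ while this function depends on $\vartheta$ only through $\sin\vartheta$ (so one needs only that it is bounded and odd in $a$, not its closed form); simultaneously $\sin k(\pi-\vartheta)=\sin k\vartheta$ folds $\int_0^\pi$ into $2\int_0^{\pi/2}$ in the surviving $\sin k\vartheta$-term, which is exactly the asserted right-hand side. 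For even $k$ this cancellation fails, which is consistent with the lemma being stated only for odd $k$.

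Granting this, $N_t(z)=i^k(z/|z|)^k\,\frac{2|z|}{t^3}\int_0^{\pi/2}\sin k\vartheta\,\sin\vartheta\,e^{-\pi|z|^2(\sin\vartheta)^2/t^2}\,{d}\vartheta$. It then remains to substitute this into $\int_\varepsilon^{1/\varepsilon}t^{1-2/p}N_t(z)\,{d}t$, interchange the $t$- and $\vartheta$-integrals once more, and in the resulting $\int_\varepsilon^{1/\varepsilon}t^{-2-2/p}e^{-\pi|z|^2(\sin\vartheta)^2/t^2}\,{d}t$ perform the change of variables $x=\pi|z|^2(\sin\vartheta)^2/t^2$ --- the same one used in the proof of Lemma~\ref{lm:gs}. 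This produces the prefactor $\pi^{-1/p-1/2}|z|^{-2/p}(\sin\vartheta)^{-2/p}$ together with the truncated gamma-integral $\int_{\pi(\sin\vartheta)^2\varepsilon^2|z|^2}^{\pi(\sin\vartheta)^2\varepsilon^{-2}|z|^2}x^{1/p-1/2}e^{-x}\,{d}x$, giving precisely the claimed formula. Apart from the Hankel identity above, the remaining steps are routine bookkeeping with Gaussian integrals and changes of variables, entirely parallel to (though slightly longer than) the proof of Lemma~\ref{lm:gs}.
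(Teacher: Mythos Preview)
Your argument is correct and follows essentially the same route as the paper's proof: Fourier inversion plus Fubini, polar coordinates to factor out $(z/|z|)^k$, a parity argument in the angular variable (exploiting oddness of $k$) to discard the non-elementary term, evaluation of the remaining Gaussian moment $\int_0^\infty \rho e^{-\pi t^2\rho^2}\sin(a\rho)\,d\rho$, and finally the substitution $x=\pi|z|^2(\sin\vartheta)^2/t^2$. The only cosmetic difference is that you name the angular integral as $2\pi i^k J_k$ and invoke Bessel's integral representation, whereas the paper carries out the equivalent parity reductions directly (first $\theta\to-\theta$, then the shift $\vartheta=\pi/2-\theta$, then $\vartheta\to-\vartheta$) without mentioning $J_k$.
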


\begin{proof}
We need to compute the inverse Fourier transform of $\widehat{f_{k,p,\varepsilon}}$. By Fubini's theorem:
\begin{equation}\label{eq:formforfeps}
f_{k,p,\varepsilon}(z)
= \int_{\varepsilon}^{1/\varepsilon} \bigg(\int_{\mathbb{C}} \Big(\frac{\zeta}{|\zeta|}\Big)^k e^{-\pi t^2|\zeta|^2} e^{2\pi i \textup{Re}(z\overline{\zeta})} \,{d}A(\zeta)\bigg) \,t^{1-2/p}\,{d}t .
\end{equation}
For each fixed $t\in[\varepsilon,\varepsilon^{-1}]$ we pass to the polar coordinates
$z = r e^{i\varphi}$, $\zeta = \rho e^{i\theta}$
in the inner integral (in $\zeta$) and use $2\pi$-periodicity in $\theta$:
\begin{align*}
& \int_{0}^{2\pi} \int_{0}^{\infty} e^{i k \theta} e^{-\pi t^2 \rho^2} e^{2\pi i r \rho \cos(\theta-\varphi)} \rho \,{d}\rho \,{d}\theta
= e^{i k \varphi} \int_{0}^{\infty} \Big( \int_{-\pi}^{\pi} e^{i k \theta} e^{2\pi i r \rho \cos\theta} \,{d}\theta \Big) e^{-\pi t^2 \rho^2} \rho \,{d}\rho.
\end{align*}
Now we split
$e^{i k \theta} = \cos k\theta + i \sin k\theta$
and observe that the integral of an odd function over $[-\pi,\pi]$ vanishes, so that the last display becomes
\begin{equation}\label{eq:oddandeven}
2 e^{i k \varphi} \int_{0}^{\infty} \Big( \int_{0}^{\pi} \cos k\theta \,e^{2\pi i r \rho \cos\theta} \,{d}\theta \Big) e^{-\pi t^2 \rho^2} \rho \,{d}\rho.
\end{equation}
At this point we need to assume that $k$ is odd.
By substituting $\vartheta=\pi/2-\theta$, noting
\[ \cos k\Big(\frac{\pi}{2}-\vartheta\Big) = (-1)^{(k-1)/2}\sin k\vartheta, \]
observing parities of the two terms in
\[ e^{2\pi i r \rho \sin\vartheta} = \cos(2\pi r \rho \sin\vartheta) + i \sin(2\pi r \rho \sin\vartheta), \]
and using Fubini's theorem again, we furthermore transform \eqref{eq:oddandeven} into
\begin{align*}
& (-1)^{(k-1)/2} \,2 e^{i k \varphi} \int_{0}^{\infty} \Big( \int_{-\pi/2}^{\pi/2} \sin k\vartheta \,e^{2\pi i r \rho \sin\vartheta} \,{d}\vartheta \Big) e^{-\pi t^2 \rho^2} \rho \,{d}\rho \\
& = (-1)^{(k-1)/2} \,2 e^{i k \varphi} \int_{0}^{\pi/2} \Big(\int_{0}^{\infty} 2i\rho \,e^{-\pi t^2 \rho^2} \,\sin(2\pi r \rho \sin\vartheta) \,{d}\rho\Big) \sin k\vartheta \,{d}\vartheta . \end{align*}
The inner integral (in $\rho$) is
\[ \int_{\mathbb{R}} \rho \,e^{-\pi t^2 \rho^2} e^{2\pi i r \rho \sin\vartheta} \,{d}\rho \]
and it can be evaluated (for each fixed $0<\vartheta\leq\pi/2$) as
\[ \frac{i r \sin\vartheta}{t^{3}} e^{-\pi t^{-2} r^2 (\sin\vartheta)^2}, \]
simply by using the fact that (for each $t>0$) the Fourier transform of
$\mathbb{R} \ni x \mapsto t^{-3} i x e^{-\pi t^{-2} x^2}$
is the function
$\mathbb{R} \ni \xi \mapsto \xi e^{-\pi t^2 \xi^2}$.
(Just use \cite[Chapter~1,\ Theorem~1.13]{SW71:fa} again and the fact that the Fourier transform interchanges differentiation and multiplication.)
Plugging all this into \eqref{eq:formforfeps} gives
\[ f_{k,p,\varepsilon}(z)
= (-1)^{(k-1)/2} \,2 i e^{i k \varphi} \int_{0}^{\pi/2} \Big( \int_{\varepsilon}^{1/\varepsilon} r \sin\vartheta \,e^{-\pi r^2 (\sin\vartheta)^2 t^{-2}} \frac{{d}t}{t^{2+2/p}} \Big) \sin k\vartheta \,{d}\vartheta. \]
Now we substitute $x=\pi r^2 (\sin\vartheta)^2 t^{-2}$ in the inner integral (the one in $t$), which finally gives
\[ f_{k,p,\varepsilon}(z)
= i^k \pi^{-1/p-1/2} e^{i k \varphi} r^{-2/p} \int_{0}^{\pi/2} \Big( \int_{\pi r^2 (\sin\vartheta)^2 \varepsilon^2}^{\pi r^2 (\sin\vartheta)^2 \varepsilon^{-2}} x^{1/p-1/2} e^{-x} \,{d}x \Big) \,\frac{\sin k\vartheta \,{d}\vartheta}{(\sin\vartheta)^{2/p}}. \]
It remains to recall that $|z|=r$ and $z/|z|=e^{i\varphi}$.
\end{proof}

The final step is estimating the norm $\|f_{k,p,\varepsilon}\|_{{L}^p(\mathbb{C})}$ from above.
For that purpose we need the following technical lemma, which will turn the cancellation in variable $\vartheta$ into an appropriate decay in terms of $k$.

\begin{lemma}
Denote:
\[
I_{k,\alpha} := \int_{0}^{\pi/2} \,\frac{\sin k\vartheta}{(\sin\vartheta)^{2\alpha}} \,{d}\vartheta.
\]
For odd $k\in\mathbb{N}$ and $\alpha\in(0,1)$ we have
\begin{equation}
\label{eq:oscileq}
 I_{k,\alpha}
= \frac{\pi^{1/2}}{2} \cdot \frac{ \Gamma(1-\alpha) \Gamma(\alpha+k/2)}{\Gamma(\alpha+1/2) \Gamma(1-\alpha+k/2)}.
\end{equation}
\end{lemma}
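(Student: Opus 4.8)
The plan is to prove \eqref{eq:oscileq} by induction on the odd integer $k$, exploiting a two-step recursion. First, since both sides of \eqref{eq:oscileq} are real-analytic in $\alpha$ on $(0,1)$ (the integral defining $I_{k,\alpha}$ is holomorphic for $\operatorname{Re}\alpha<1$ by differentiation under the integral sign, and the ratio of gamma-functions has no poles there), it suffices to prove the identity for $\alpha\in(0,1)\setminus\{1/2\}$. The base case $k=1$ is immediate: $\sin\vartheta/(\sin\vartheta)^{2\alpha}=(\sin\vartheta)^{1-2\alpha}$, so $I_{1,\alpha}=\int_0^{\pi/2}(\sin\vartheta)^{1-2\alpha}\,{d}\vartheta=\tfrac12 B(1-\alpha,\tfrac12)=\tfrac{\sqrt\pi}{2}\,\Gamma(1-\alpha)/\Gamma(3/2-\alpha)$, which is exactly the right-hand side of \eqref{eq:oscileq} with $k=1$.

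For the inductive step I would write $k=2n+1$ with $n\geq1$ and use the sum-to-product identity $\sin k\vartheta-\sin(k-2)\vartheta=2\sin\vartheta\cos(2n\vartheta)$ to get $I_{2n+1,\alpha}-I_{2n-1,\alpha}=2\int_0^{\pi/2}(\sin\vartheta)^{1-2\alpha}\cos(2n\vartheta)\,{d}\vartheta$. After the substitution $\vartheta\mapsto\pi/2-\vartheta$ this equals $2(-1)^n\int_0^{\pi/2}(\cos\vartheta)^{1-2\alpha}\cos(2n\vartheta)\,{d}\vartheta$, and here one invokes the classical value of the Fourier-cosine coefficient of a power of the cosine,
\[
\int_0^{\pi/2}(\cos\vartheta)^{\mu-1}\cos(2n\vartheta)\,{d}\vartheta=\frac{\pi\,\Gamma(\mu)}{2^{\mu}\,\Gamma\!\big(\tfrac{\mu+1}{2}+n\big)\,\Gamma\!\big(\tfrac{\mu+1}{2}-n\big)}\qquad(\mu>0),
\]
with $1/\Gamma$ understood as an entire function; this is standard (it follows from expanding $(\cos\vartheta)^{\mu-1}=2^{1-\mu}e^{-i(\mu-1)\vartheta}(1+e^{2i\vartheta})^{\mu-1}$ into its binomial series and integrating termwise, and can also be found in integral tables). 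Taking $\mu=2-2\alpha$ gives the recursion
\[
I_{2n+1,\alpha}-I_{2n-1,\alpha}=\frac{(-1)^n\,\pi\,\Gamma(2-2\alpha)}{2^{1-2\alpha}\,\Gamma\!\big(\tfrac32-\alpha+n\big)\,\Gamma\!\big(\tfrac32-\alpha-n\big)}.
\]

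It then remains to verify that the candidate right-hand side of \eqref{eq:oscileq}, call it $T(n)$ (for $k=2n+1$), obeys the same recursion $T(n)-T(n-1)=I_{2n+1,\alpha}-I_{2n-1,\alpha}$. Using $\Gamma(z+1)=z\Gamma(z)$ one reduces $T(n)-T(n-1)$ to $\tfrac{\sqrt\pi}{2}\cdot\tfrac{(2\alpha-1)\,\Gamma(1-\alpha)}{\Gamma(\alpha+1/2)}\cdot\tfrac{\Gamma(\alpha+n-1/2)}{\Gamma(3/2-\alpha+n)}$, so after cancelling $\Gamma(3/2-\alpha+n)$ and multiplying by $\Gamma(3/2-\alpha-n)$ the required identity becomes $\Gamma(\alpha+n-\tfrac12)\,\Gamma(\tfrac32-\alpha-n)=2\sqrt\pi\,\Gamma(2-2\alpha)\,\Gamma(\alpha+\tfrac12)\big/\big(2^{1-2\alpha}(2\alpha-1)\Gamma(1-\alpha)\big)$. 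The left side equals $\pi/\sin(\pi(\alpha+n-\tfrac12))=(-1)^{n+1}\pi/\cos(\pi\alpha)$ by Euler's reflection formula; the right side, using $\Gamma(2-2\alpha)=(1-2\alpha)\Gamma(1-2\alpha)$ and Legendre's duplication formula $\Gamma(1-2\alpha)=2^{-2\alpha}\pi^{-1/2}\Gamma(\tfrac12-\alpha)\Gamma(1-\alpha)$, collapses to $-\Gamma(\tfrac12-\alpha)\Gamma(\tfrac12+\alpha)=-\pi/\cos(\pi\alpha)$, again by reflection; the apparent sign discrepancy disappears once one notes the factor $(-1)^{n+1}/(-1)^n=-1$ coming from the recursion. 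The excluded value $\alpha=1/2$ is recovered either from the analyticity remark or by observing directly that $I_{2n+1,1/2}=\int_0^{\pi/2}\sin((2n+1)\vartheta)/\sin\vartheta\,{d}\vartheta=\pi/2=T(n)$, the classical Dirichlet integral. I expect the only real work to be this last gamma-function bookkeeping: it is entirely mechanical but the signs and the handling of the poles of $\Gamma$ near $\alpha=1/2$ must be treated with some care.

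An essentially equivalent but more algebraic route avoids the recursion: form the generating function and use $\sum_{k\text{ odd}}x^k\sin k\vartheta=x(1+x^2)\sin\vartheta\big/\big((1-x^2)^2+4x^2\sin^2\vartheta\big)$ to get (after interchanging sum and integral by dominated convergence for $|x|<1$, and substituting $t=\sin^2\vartheta$)
\[
\sum_{k\ \mathrm{odd}} I_{k,\alpha}\,x^k=\frac{x(1+x^2)}{2}\int_0^1\frac{t^{-\alpha}(1-t)^{-1/2}}{(1-x^2)^2+4x^2t}\,{d}t;
\]
recognising the $t$-integral as Euler's integral for the Gauss hypergeometric function and comparing coefficients of $x^{2n+1}$ reduces the whole claim to the single closed-form identity $\tfrac{1+x^2}{(1-x^2)^2}\,{}_2F_1\!\big(1,1-\alpha;\tfrac32-\alpha;-\tfrac{4x^2}{(1-x^2)^2}\big)={}_2F_1\!\big(1,\alpha+\tfrac12;\tfrac32-\alpha;x^2\big)$, which in turn follows by chaining the Euler, a quadratic, and the Pfaff transformations of ${}_2F_1$. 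I would present the recursion argument as the main proof and mention this second derivation as a remark.
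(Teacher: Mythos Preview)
Your proof is correct and, like the paper's, proceeds by induction on odd $k$ with the same base case $I_{1,\alpha}=\tfrac12 B(1-\alpha,\tfrac12)$. The difference lies in the recursion step. You derive an \emph{additive} relation for $I_{k,\alpha}-I_{k-2,\alpha}$ via the sum-to-product identity and then invoke the closed form for $\int_0^{\pi/2}(\cos\vartheta)^{\mu-1}\cos(2n\vartheta)\,d\vartheta$; matching with $T(n)-T(n-1)$ then requires the reflection and duplication formulas (your sign bookkeeping around the ``apparent discrepancy'' is a bit garbled---in fact both sides simply equal $(-1)^{n+1}\pi/\cos(\pi\alpha)$ once the $(-1)^n$ from the recursion is kept---but the computation is right). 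The paper instead integrates $\int_\delta^{\pi/2}(\sin\vartheta)^{-2\alpha}\sin k\vartheta\,d\vartheta$ by parts, uses $\cos\vartheta\cos k\vartheta=\cos(k-1)\vartheta-\sin\vartheta\sin k\vartheta$ and its $k-2$ analogue, and subtracts: the divergent $\int(\sin\vartheta)^{-2\alpha-1}\cos(k-1)\vartheta$ terms cancel and one obtains directly the \emph{multiplicative} recursion
\[
I_{k,\alpha}=\frac{k/2-1+\alpha}{k/2-\alpha}\,I_{k-2,\alpha},
\]
which the gamma ratio satisfies trivially via $\Gamma(x+1)=x\Gamma(x)$. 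The paper's route is thus shorter and fully self-contained (no external integral formula, no reflection or duplication identities, no need to exclude $\alpha=1/2$ and argue by analyticity); your route works but trades an elementary integration by parts for heavier special-function manipulation. The generating-function alternative you sketch is a third correct approach, not used in the paper.
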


\begin{proof}
For $k=1$
we have
\[
I_{1,\alpha}
= \int_{0}^{\pi/2} (\sin\vartheta)^{1-2\alpha} \,{d}\vartheta
= \frac12 B(1-\alpha,1/2)
= \frac{\Gamma(1-\alpha) \Gamma(1/2)}{2\Gamma(3/2-\alpha)} = \frac{\pi^{1/2}\Gamma(1-\alpha)}{2\Gamma(3/2-\alpha)},
\]
just as we claimed.

Next, suppose that $k\geq3$ is odd and take some $0<\delta<\pi/2$.
Integration by parts gives
\[ \int_{\delta}^{\pi/2} \frac{\sin k\vartheta}{(\sin\vartheta)^{2\alpha}} \,{d}\vartheta
= \frac{\cos k\delta}{k(\sin\delta)^{2\alpha}}
- \frac{2\alpha}{k} \int_{\delta}^{\pi/2} \frac{\cos\vartheta \cos k\vartheta}{(\sin\vartheta)^{1+2\alpha}} \,{d}\vartheta , \]
so that, by multiplying the identity by $k/2$ and applying the addition formula for cosine,
\[ \Big(\frac{k}{2}-\alpha\Big) \int_{\delta}^{\pi/2} \frac{\sin k\vartheta}{(\sin\vartheta)^{2\alpha}} \,{d}\vartheta
= \frac{\cos k\delta}{2(\sin\delta)^{2\alpha}}
- \alpha \int_{\delta}^{\pi/2} \frac{\cos (k-1)\vartheta}{(\sin\vartheta)^{1+2\alpha}} \,{d}\vartheta . \]
Analogously,
\[ \Big(\frac{k-2}{2}+\alpha\Big) \int_{\delta}^{\pi/2} \frac{\sin (k-2)\vartheta}{(\sin\vartheta)^{2\alpha}} \,{d}\vartheta
= \frac{\cos (k-2)\delta}{2(\sin\delta)^{2\alpha}}
- \alpha \int_{\delta}^{\pi/2} \frac{\cos (k-1)\vartheta}{(\sin\vartheta)^{1+2\alpha}} \,{d}\vartheta . \]
Subtracting the last two displays and letting $\delta\to0^+$ gives
\begin{align*}
\Big(\frac{k}{2}-\alpha\Big) I_{k,\alpha} - \Big(\frac{k-2}{2}+\alpha\Big) I_{k-2,\alpha}
& = \lim_{\delta\to0^+} \frac{\cos k\delta-\cos (k-2)\delta}{2(\sin\delta)^{2\alpha}} \\
& = -\lim_{\delta\to0^+} (\sin(k-1)\delta) (\sin\delta)^{1-2\alpha} = 0,
\end{align*}
which transforms into a recurrence relation
\[ I_{k,\alpha} = \frac{k/2 - 1 + \alpha}{k/2 - \alpha} I_{k-2,\alpha} . \]
Desired equality \eqref{eq:oscileq} now follows by the mathematical induction over odd positive integers $k$ using the property $x\Gamma(x)=\Gamma(x+1)$ for $x>0$.
\end{proof}

\begin{lemma}
\label{lm:fsnorm}
For $2<p<\infty$ we have
\[ \|f_{k,p,\varepsilon}\|_{{L}^p(\mathbb{C})} = 2^{2/p} \pi^{-1/2} \Gamma
(1/p+1/2)
I_{k,1/p} \Big(\log\frac{1}{\varepsilon}\Big)^{1/p} +
E_{k,p}(\varepsilon),\]
where
$E_{k,p}$
is a real-valued bounded function of $\varepsilon\in(0,1)$ depending on parameters $k,p$.
\end{lemma}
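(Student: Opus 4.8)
Our starting point is the closed form for $f_{k,p,\varepsilon}$ from Lemma~\ref{lm:fs}. Set $c_{k,p}:=\Gamma(1/p+1/2)\,I_{k,1/p}$, which is a strictly positive constant by \eqref{eq:oscileq}, and for $r>0$ put
\[
\Psi_\varepsilon(r):=\int_{0}^{\pi/2}\Big(\int_{\pi (\sin\vartheta)^2\varepsilon^{2}r^{2}}^{\pi (\sin\vartheta)^2\varepsilon^{-2}r^{2}}x^{1/p-1/2}e^{-x}\,dx\Big)\frac{\sin k\vartheta}{(\sin\vartheta)^{2/p}}\,d\vartheta,
\]
so that Lemma~\ref{lm:fs} reads $|f_{k,p,\varepsilon}(z)|=\pi^{-1/p-1/2}|z|^{-2/p}\,|\Psi_\varepsilon(|z|)|$ (note that $\Psi_\varepsilon$ is real-valued). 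Passing to polar coordinates I would first obtain
\[
\|f_{k,p,\varepsilon}\|_{L^p(\mathbb{C})}^{p}=2\pi^{-p/2}\int_{0}^{\infty}|\Psi_\varepsilon(r)|^{p}\,\frac{dr}{r}\,,
\]
and the whole task becomes to show that $\int_{0}^{\infty}|\Psi_\varepsilon(r)|^{p}\,\frac{dr}{r}=2\,c_{k,p}^{\,p}\log(1/\varepsilon)+O_{k,p}(1)$; indeed $(2\pi^{-p/2}\cdot 2\,c_{k,p}^{\,p})^{1/p}=2^{2/p}\pi^{-1/2}\Gamma(1/p+1/2)\,I_{k,1/p}$, which is exactly the claimed leading term.

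Next I would extract the ``bulk'' of $\Psi_\varepsilon$. Using \eqref{eq:defofgammafn} and the definition of $I_{k,1/p}$, write $\Psi_\varepsilon(r)=c_{k,p}-E_1(r)-E_2(r)$, where $E_1(r),E_2(r)$ arise by replacing the inner $x$-integral by $\int_{0}^{\pi(\sin\vartheta)^2\varepsilon^{2}r^{2}}$ and by $\int_{\pi(\sin\vartheta)^2\varepsilon^{-2}r^{2}}^{\infty}$, respectively. The two trivial estimates $\int_{0}^{A}x^{1/p-1/2}e^{-x}dx\leq\frac{A^{1/p+1/2}}{1/p+1/2}$ and $\int_{A}^{\infty}x^{1/p-1/2}e^{-x}dx\leq\min\{\Gamma(1/p+1/2),\,C_p e^{-A/2}\}$, applied after splitting the $\vartheta$-integral at the angle where the relevant endpoint equals $1$ and using $|\sin k\vartheta|\leq 1$ together with $p>2$ (so that $(\sin\vartheta)^{-2/p}$ is integrable near $0$), give the pointwise bounds $|E_1(r)|\lesssim_p(r\varepsilon)^{1+2/p}$, $|E_2(r)|\lesssim_p(\varepsilon/r)^{1-2/p}$ for $r/\varepsilon\geq1$, and, symmetrically, $|c_{k,p}-E_1(r)|\lesssim_p(r\varepsilon)^{2/p-1}$ for $r\varepsilon\geq1$ and $|c_{k,p}-E_2(r)|\lesssim_p(r/\varepsilon)^{1+2/p}$; moreover $|E_1(r)|,|E_2(r)|\leq\Gamma(1/p+1/2)I_{k,1/p}$ trivially, so $|\Psi_\varepsilon(r)|\leq C_{k,p}$ for every $r>0$.

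Now fix $\eta=\eta(k,p)\in(0,1)$ small enough that the above bounds force $|E_1(r)|+|E_2(r)|<c_{k,p}/2$ whenever $r\varepsilon\leq\eta$ and $\varepsilon/r\leq\eta$, and split $(0,\infty)$ into the five pieces $(0,\varepsilon)$, $(\varepsilon,\varepsilon/\eta)$, $(\varepsilon/\eta,\eta/\varepsilon)$, $(\eta/\varepsilon,1/\varepsilon)$, $(1/\varepsilon,\infty)$ (for $\varepsilon\geq\eta$ only the outer pieces survive, and they are harmless, as below). On $(0,\varepsilon)$ one has $|\Psi_\varepsilon(r)|\lesssim_{k,p}(r/\varepsilon)^{1+2/p}$ and on $(1/\varepsilon,\infty)$ one has $|\Psi_\varepsilon(r)|\lesssim_{k,p}(r\varepsilon)^{2/p-1}$; the substitutions $u=r/\varepsilon$ and $u=r\varepsilon$, together with $p>2$, make both contributions to $\int|\Psi_\varepsilon|^p\frac{dr}{r}$ equal to $O_{k,p}(1)$ uniformly in $\varepsilon$. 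On the two transition intervals the integrand is $\leq C_{k,p}^{\,p}$ and the intervals have logarithmic length $\log(1/\eta)$, so they contribute $O_{k,p}(1)$ as well. On the central interval $\Psi_\varepsilon(r)>c_{k,p}/2>0$, hence $|\Psi_\varepsilon(r)|^p=c_{k,p}^{\,p}+O_{k,p}(|E_1(r)|+|E_2(r)|)$ by the mean value theorem, and integrating $|E_1(r)|+|E_2(r)|\lesssim_p(r\varepsilon)^{1+2/p}+(\varepsilon/r)^{1-2/p}$ against $\frac{dr}{r}$ over $(\varepsilon/\eta,\eta/\varepsilon)$ (again via $u=r\varepsilon$, $u=\varepsilon/r$, $p>2$) gives $O_{k,p}(1)$, while $\int_{\varepsilon/\eta}^{\eta/\varepsilon}c_{k,p}^{\,p}\frac{dr}{r}=c_{k,p}^{\,p}\big(2\log(1/\varepsilon)+2\log\eta\big)$. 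Adding the five pieces yields $\int_{0}^{\infty}|\Psi_\varepsilon(r)|^p\frac{dr}{r}=2\,c_{k,p}^{\,p}\log(1/\varepsilon)+O_{k,p}(1)$, hence $\|f_{k,p,\varepsilon}\|_{L^p(\mathbb{C})}^{p}=4\pi^{-p/2}c_{k,p}^{\,p}\log(1/\varepsilon)+B_{k,p}(\varepsilon)$ with $B_{k,p}$ bounded on $(0,1)$. Taking $p$-th roots and using the mean value theorem for $x\mapsto x^{1/p}$ — on the range where $4\pi^{-p/2}c_{k,p}^{\,p}\log(1/\varepsilon)\geq 2\sup|B_{k,p}|$, the complementary (bounded) range of $\log(1/\varepsilon)$ being treated directly — shows that $E_{k,p}(\varepsilon):=\|f_{k,p,\varepsilon}\|_{L^p(\mathbb{C})}-2^{2/p}\pi^{-1/2}\Gamma(1/p+1/2)I_{k,1/p}(\log(1/\varepsilon))^{1/p}$ is bounded on $(0,1)$, which is the claim.

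The main obstacle is the pointwise control of $E_1$ and $E_2$ near $\vartheta=0$: there the weight $(\sin\vartheta)^{-2/p}$ coming from the superposition of Gaussians blows up, and no matter how small $\varepsilon$ is the inner $x$-integrals are not close to $\Gamma(1/p+1/2)$, so one must simultaneously exploit the oscillatory cancellation (encoded in $|\sin k\vartheta|\leq\min\{1,k\sin\vartheta\}$), the fast decay of the incomplete gamma-tails, and the hypothesis $p>2$. One then has to check that, after these estimates, the ``transition'' regions genuinely contribute only an $O(1)$ amount, so that precisely $2\,c_{k,p}^{\,p}\log(1/\varepsilon)$ survives and produces the stated leading constant.
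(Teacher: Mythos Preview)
Your proposal is correct and follows essentially the same strategy as the paper's own proof: isolate the main term $c_{k,p}=\Gamma(1/p+1/2)I_{k,1/p}$ in the inner integral of Lemma~\ref{lm:fs}, control the two incomplete-gamma error pieces pointwise, and then integrate radially. There are only organizational differences worth noting. The paper uses the crude bound \eqref{eq:easysine}, namely $|\sin k\vartheta|/(\sin\vartheta)^{2/p}\lesssim_k 1$, to reduce the error \eqref{eq:fepserror} to a $\vartheta$-independent expression, and then applies the triangle inequality in $L^p$ together with the explicit formulas \eqref{eq: powersLp} over the three regions $\{|z|<\varepsilon\}$, $A_\varepsilon$, $\{|z|>1/\varepsilon\}$; the outer region is handled via Minkowski's integral inequality. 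You instead split the $\vartheta$-integral at the angle where the endpoint equals $1$, which lets you use only $|\sin k\vartheta|\le 1$ and the integrability of $(\sin\vartheta)^{-2/p}$, and you linearize $|\Psi_\varepsilon|^p$ via the mean value theorem on a central interval (hence the auxiliary $\eta$-buffer and the five-piece decomposition). Both routes are standard and equivalent in strength. One small slip: your ``trivially $\leq\Gamma(1/p+1/2)I_{k,1/p}$'' for $|E_j|$ should read $\Gamma(1/p+1/2)\int_0^{\pi/2}|\sin k\vartheta|(\sin\vartheta)^{-2/p}d\vartheta$ (with the absolute value), but this is still a finite $k,p$-constant and the argument is unaffected.
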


\begin{proof}
Take $z\in\mathbb{C}\backslash\{0\}$.
By Lemma~\ref{lm:fs} and
\begin{equation}\label{eq:easysine}
\frac{|\sin k\vartheta|}{(\sin\vartheta)^{2/p}}
\leq \frac{|\sin k\vartheta|}{\sin\vartheta} \lesssim_k 1
\end{equation}
we see that $|f_{k,p,\varepsilon}(z)|$ differs from
\begin{align*}
& \pi^{-1/p-1/2} |z|^{-2/p} \bigg| \int_{0}^{\pi/2} \Big( \int_{0}^{\infty} x^{1/p-1/2} e^{-x} \,{d}x \Big) \,\frac{\sin k\vartheta \,{d}\vartheta}{(\sin\vartheta)^{2/p}} \bigg| \\
& = \pi^{-1/p-1/2} \Gamma\Big(\frac{1}{p}+\frac{1}{2}\Big) I_{k,1/p} |z|^{-2/p}
\end{align*}
by at most a constant multiple of
\begin{equation}\label{eq:fepserror}
|z|^{-2/p} \int_{0}^{\pi/2} \bigg( \Big( \int_{0}^{\pi \varepsilon^2 |z|^2} + \int_{\pi (\sin\vartheta)^2 \varepsilon^{-2} |z|^2}^{\infty} \Big) \,x^{1/p-1/2} e^{-x} \,{d}x \bigg) \,{d}\vartheta .
\end{equation}
The integrals in $x$ in this error term are estimated like in the proof of Lemma~\ref{lm:gs} as
\[ \int_{0}^{\pi \varepsilon^2 |z|^2} x^{1/p-1/2} e^{-x} \,{d}x
\leq \int_{0}^{\pi \varepsilon^2 |z|^2} x^{1/p-1/2} \,{d}x
\lesssim (\varepsilon |z|)^{2/p+1} \]
and
\[ \int_{\pi (\sin\vartheta)^2 \varepsilon^{-2} |z|^2}^{\infty} x^{1/p-1/2} e^{-x} \,{d}x
\lesssim_{k,p} \Big(\frac{\varepsilon}{|z|\sin\vartheta}\Big)^{1-2/p} \int_{\pi (\sin\vartheta)^2 \varepsilon^{-2} |z|^2}^{\infty} e^{-x} \,{d}x
\leq \Big(\frac{\varepsilon}{|z|\sin\vartheta}\Big)^{1-2/p} . \]
Therefore,
\eqref{eq:fepserror} is
\[ \lesssim_{k,p} |z|^{-2/p} \big( (\varepsilon |z|)^{2/p+1} + (\varepsilon/|z|)^{1-2/p} \big) . \]
All this gives
\[ |f_{k,p,\varepsilon}(z)| = \pi^{-1/p-1/2} \Gamma\Big(\frac{1}{p}+\frac{1}{2}\Big) I_{k,1/p} |z|^{-2/p} +
C_{k,p}
\varepsilon^{2/p+1}|z|
+
C_{k,p}
\varepsilon^{1-2/p}|z|^{-1}
. \]
(The meaning of $C_{k,p}$ is like in Notation \ref{n: Cp}, just that it may also depend on $k$.)

We first integrate over $A_\varepsilon=\{\varepsilon<|z|<1/\varepsilon\}$. From
\eqref{eq: powersLp}
we get
\[ \Big( \int_{\{\varepsilon\leq |z|\leq 1/\varepsilon\}}|f_{k,p,\varepsilon}(z)|^p \,{d}\textup{A}(z) \Big)^{1/p}
= 2^{2/p} \pi^{-1/2} \Gamma\Big(\frac{1}{p}+\frac{1}{2}\Big) I_{k,1/p} \Big(\log\frac{1}{\varepsilon}\Big)^{1/p} +
C_{k,p}. \]

Now we turn to the region $|z|<\varepsilon$.
This time Lemma~\ref{lm:fs} combined with \eqref{eq:easysine} gives
\begin{align*}
|f_{k,p,\varepsilon}(z)|
& \leq |z|^{-2/p} \Big( \int_{0}^{\pi/2} \frac{|\sin k\vartheta| \,{d}\vartheta}{(\sin\vartheta)^{2/p}} \Big) \Big( \int_{0}^{\pi (|z|/\varepsilon)^2} \! x^{1/p-1/2} e^{-x} \,{d}x \Big) \\
& \lesssim_{k} |z|^{-2/p} \int_{0}^{\pi (|z|/\varepsilon)^2} \! x^{1/p-1/2} \,{d}x
\lesssim_p \varepsilon^{-2/p-1}|z| ,
\end{align*}
so that integrating we get
\[ \Big( \int_{\{|z|<\varepsilon\}}|f_{k,p,\varepsilon}(z)|^p \,{d}\textup{A}(z) \Big)^{1/p}  \lesssim_{k,p} 1. \]

Finally, we turn to the region $|z|>1/\varepsilon$.
Here Lemma~\ref{lm:fs} combined with \eqref{eq:easysine} gives
\begin{align*}
|f_{k,p,\varepsilon}(z)|
& \lesssim_k |z|^{-2/p} \int_{0}^{\pi/2} \Big( \int_{\pi (\sin\vartheta)^2 \varepsilon^2 |z|^2}^{\infty} x^{1/p-1/2} e^{-x} \,{d}x \Big) \,\frac{|\sin k\vartheta| \,{d}\vartheta}{(\sin\vartheta)^{2/p}} \\
& \leq |z|^{-2/p} \int_{0}^{\pi/2} \Big( (\varepsilon|z|)^{2/p-1} \int_{\pi (\sin\vartheta)^2 \varepsilon^2 |z|^2}^{\infty} e^{-x} \,{d}x \Big) \,\frac{|\sin k\vartheta| \,{d}\vartheta}{\sin\vartheta} \\
& \lesssim_k |z|^{-2/p} \int_{0}^{\pi/2} \Big( (\varepsilon|z|)^{2/p} \int_{\pi (\sin\vartheta)^2 \varepsilon^2 |z|^2}^{\infty} e^{-x} \,{d}x \Big) \,{d}\vartheta \\
& = \varepsilon^{2/p} \int_{0}^{\pi/2} e^{-\pi (\sin\vartheta)^2 \varepsilon^2 |z|^2} \,{d}\vartheta ,
\end{align*}
so that Minkowski's integral inequality and polar coordinates give
\begin{align*}
\Big( \int_{\{|z|>1/\varepsilon\}}|f_{k,p,\varepsilon}(z)|^p \,{d}\textup{A}(z) \Big)^{1/p}
& \lesssim_k \varepsilon^{2/p} \int_{0}^{\pi/2} \Big( \int_{1/\varepsilon}^{\infty} e^{-\pi p (\sin\vartheta)^2 \varepsilon^2 r^2} r\,{d}r \Big)^{1/p} \,{d}\vartheta \\
& \leq \varepsilon^{2/p} \int_{0}^{\pi/2} \Big(\frac{1}{2\pi p (\sin\vartheta)^2 \varepsilon^2}\Big)^{1/p} \,{d}\vartheta \\
& \leq \int_{0}^{\pi/2} \!\frac{{d}\vartheta}{(\sin\vartheta)^{2/p}}
\lesssim \int_{0}^{\pi/2} \!\frac{{d}\vartheta}{\vartheta^{2/p}} \lesssim_{p} 1 .
\end{align*}
This completes the proof.
\end{proof}

\subsection{Proof of Theorem~\ref{t: LowerExact}}
For even $k$, the theorem was already established by one of the authors of the present paper in \cite[Theorem 1.1]{D11-2}.

Now we assume that $k$ is odd.
Combining Lemmata~\ref{lm:gsnorm} and \ref{lm:fsnorm} with \eqref{eq:Htranfg} we can write, for a real constant $C_p$ and a bounded real function $E_{k,p}$,
\begin{align*}
\|{R}^k\|_{{L}^p(\mathbb{C})\to{L}^p(\mathbb{C})}
& \geq \frac{\|g_{p,\varepsilon}\|_{{L}^p(\mathbb{C})}}{\|f_{k,p,\varepsilon}\|_{{L}^p(\mathbb{C})}}
\geq \frac{\pi^{1/2}\Gamma(1/p)(\log(1/\varepsilon))^{1/p} +
C_p}{2\Gamma(1/p+1/2)I_{k,1/p}(\log(1/\varepsilon))^{1/p} +
E_{k,p}(\varepsilon)} \\
& = \frac{\pi^{1/2}\Gamma(1/p) +
C_{p}
(\log(1/\varepsilon))^{-1/p}}{2\Gamma(1/p+1/2)I_{k,1/p} +
E_{k,p}(\varepsilon)(\log(1/\varepsilon))^{-1/p}} ,
\end{align*}
so letting $\varepsilon
\searrow0$
and applying the identity \eqref{eq:oscileq} gives the first
inequality.
The second one
is a part of
Proposition~\ref{p: sharp asymptotics} below.
\qed

\subsection{An elementary lower bound}
Taking \cite[Theorem 1.4]{D11-2} as a model, we prove the following bilateral numerical estimate.

\begin{proposition}
\label{p: sharp asymptotics}
For $k\in\mathbb{N}$ and $2\leq p<\infty$ we define
\begin{equation}
\label{eq: gammakp}
\gamma_{k}(p) := \frac{\Gamma(1/p) \Gamma(1/q + k/2)}{\Gamma(1/q) \Gamma(1/p + k/2)}
\end{equation}
where $1/p+1/q=1$.
Then
\begin{equation}\label{eq:gammaest}
\frac12 \leq \frac{\gamma_{k}(p)}{k^{1-2/p} (p-1)} \leq 1
\end{equation}
and the constants $1/2$ and $1$ are both optimal.
\end{proposition}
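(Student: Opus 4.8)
The plan is to rewrite the quotient in \eqref{eq:gammaest} purely in terms of ratios of Gamma‑functions and to reduce everything to the monotonicity of a single auxiliary function of one variable. Put $a:=1/p$ and $b:=1/q=1-a$, so that $0<a\leq\tfrac12\leq b<1$, $b-a=1-2/p$, and $p-1=b/a$. Using $a\Gamma(a)=\Gamma(a+1)$, $b\Gamma(b)=\Gamma(b+1)$ and \eqref{eq: gammakp}, a direct computation gives, with $\rho(t):=\frac{\Gamma(b+t)}{\Gamma(a+t)}\,t^{-(b-a)}$ for $t>0$,
\[
\frac{\gamma_{k}(p)}{k^{1-2/p}(p-1)}=\frac{\Gamma(a+1)}{\Gamma(b+1)}\cdot\frac{\Gamma(b+k/2)}{\Gamma(a+k/2)}\cdot k^{-(b-a)}=\frac{\rho(k/2)}{2^{b-a}\,\rho(1)}.
\]
So it suffices to prove $\tfrac12\leq\rho(k/2)/(2^{b-a}\rho(1))\leq1$ for every $k\in\mathbb{N}$.

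The first (and only genuinely delicate) step is to show that $\rho$ is non-increasing on $(0,\infty)$ with $\rho(t)\to1$ as $t\to\infty$; in particular $\rho\geq1$. The limit is the classical asymptotics $\Gamma(b+t)/\Gamma(a+t)\sim t^{b-a}$. For monotonicity, writing $\psi:=\Gamma'/\Gamma$ one has $(\log\rho)'(t)=\psi(b+t)-\psi(a+t)-(b-a)/t$, and from $\psi(x)=-\gamma+\sum_{n\geq0}\big(\tfrac1{n+1}-\tfrac1{n+x}\big)$ I would deduce
\[
\psi(b+t)-\psi(a+t)=(b-a)\sum_{n\geq0}\frac{1}{(n+a+t)(n+b+t)}\leq(b-a)\sum_{n\geq0}\Big(\frac1{n+t}-\frac1{n+t+1}\Big)=\frac{b-a}{t},
\]
the inequality coming from $(n+a+t)(n+b+t)=(n+t)^2+(n+t)+ab\geq(n+t)(n+t+1)$, valid because $a+b=1$ and $ab\geq0$. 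Hence $(\log\rho)'\leq0$.

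The second step is the numerical estimate $2^{b-a}\rho(1)\leq2$, that is, $\Gamma(2-a)/\Gamma(1+a)\leq4^{a}$ for $a\in(0,\tfrac12]$. I would set $h(a):=2a\log2-\log\Gamma(2-a)+\log\Gamma(1+a)$, observe $h(0)=0$, and check $h'(a)=2\log2+\psi(2-a)+\psi(1+a)\geq2\log2+\psi(2)+\psi(1)=2\log2+1-2\gamma>0$, using that $a\mapsto\psi(2-a)+\psi(1+a)$ is non-decreasing on $[0,\tfrac12]$ (since $2-a\geq1+a$ there and $\psi'$ is decreasing) and that $\gamma<1<\tfrac12+\log2$. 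Therefore $h\geq0$ on $[0,\tfrac12]$.

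Granting these, both bounds follow at once. For the lower bound, $\rho(k/2)\geq1$ together with $2^{b-a}\rho(1)\leq2$ gives the quotient $\geq\tfrac12$. For the upper bound: if $k\geq2$, then $\rho(k/2)\leq\rho(1)\leq2^{b-a}\rho(1)$ by the monotonicity of $\rho$ and $2^{b-a}\geq1$; if $k=1$, the quotient equals $\dfrac{\Gamma(b+1/2)/\Gamma(a+1/2)}{\Gamma(b+1)/\Gamma(a+1)}\leq1$ because $s\mapsto\Gamma(b+s)/\Gamma(a+s)$ has logarithmic derivative $\psi(b+s)-\psi(a+s)\geq0$, hence is non-decreasing. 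Optimality is read off from the endpoints: when $p=2$ one has $a=b=\tfrac12$, so $\rho\equiv1$ and the quotient equals $1$ for every $k$, whence $1$ cannot be lowered; and for $k=1$, letting $p\to\infty$ (so $a\to0$, $b\to1$) the quotient tends to $\big(\Gamma(\tfrac32)/\Gamma(\tfrac12)\big)/\big(\Gamma(2)/\Gamma(1)\big)=\tfrac12$, whence $\tfrac12$ cannot be raised. Everything beyond the monotonicity of $\rho$ is routine Gamma/digamma bookkeeping, and the argument is uniform in $k$, reproving in particular the even‑$k$ case of \cite[Theorem 1.4]{D11-2}.
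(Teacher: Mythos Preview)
Your proof is correct, but it takes a genuinely different route from the paper's.

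The paper introduces the auxiliary function $\varphi_k(x):=\dfrac{\Gamma(x+k/2)}{\Gamma(x+1)\,k^{x}}$ and rewrites the quotient as $\varphi_k(1/q)/\varphi_k(1/p)$. It then shows that $\varphi_k$ is \emph{decreasing on $[0,1]$} (via the digamma function and the series for $\psi$), and since $0\leq 1/p\leq 1/q\leq 1$ this immediately yields
\[
\frac12=\frac{\varphi_k(1)}{\varphi_k(0)}\leq\frac{\varphi_k(1/q)}{\varphi_k(1/p)}\leq 1,
\]
with both bounds and their optimality dropping out of the endpoint identities $\varphi_k(1)/\varphi_k(0)=1/2$. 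In other words, the paper varies the ``$p$-variable'' with $k$ fixed, and monotonicity alone gives everything in one stroke.

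You vary instead the ``$k$-variable'' with $p$ fixed: your $\rho(t)=\Gamma(b+t)/\Gamma(a+t)\cdot t^{-(b-a)}$ is non-increasing in $t$ with limit $1$, and you express the quotient as $\rho(k/2)/(2^{b-a}\rho(1))$. Your telescoping argument for $(\log\rho)'\leq 0$ is slick (arguably cleaner than the paper's verification that $\omega_k(0)\leq 0$), but the cost of choosing this direction is that monotonicity of $\rho$ alone is not enough: you need the additional numerical lemma $2^{b-a}\rho(1)\leq 2$ for the lower bound, and you must treat the case $k=1$ separately for the upper bound (since then $k/2<1$ lies outside the range where $\rho(k/2)\leq\rho(1)$ applies). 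Both extra steps are correct as written. The paper's approach avoids these case distinctions entirely, at the price of a slightly longer monotonicity verification.
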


\begin{proof}
For the proof of \eqref{eq:gammaest} rewrite the ratio as
\[ \frac{\varphi_k(1/q)}{\varphi_k(1/p)}, \]
where
\[ \varphi_k (x) := \frac{\Gamma(x+k/2)}{\Gamma(x+1) k^x}. \]
We claim that $\varphi_k$ is decreasing on $[0,1]$. Once we know that, we just need to observe
\begin{equation*}
\frac{1}{2}
= \frac{\varphi_k(1)}{\varphi_k(0)} \leq \frac{\varphi_k(1/q)}{\varphi_k(1/p)} \leq 1.
\end{equation*}

In terms of the digamma function $\psi$, see Beals and Wong \cite[Section 2.6]{BW10}, we have
\[
\omega_k(x):=\frac{{d}}{{d}x} \log \varphi_k(x)
= \psi(x+k/2) - \psi(x+1) - \log k.
\]
We
need to verify that
$\omega_k(x)\leq 0$ when $k\geq 1$ and $x\in[0,1]$. Applying \cite[(2.6.1)]{BW10} gives
\begin{equation}
\label{eq: omega}
\omega_k(x)
=\sum_{n=0}^\infty\left(\frac1{n+x+1}-\frac1{n+x+k/2}\right)-\log k.
\end{equation}
From \eqref{eq: omega} we see right away that $\omega_k(x)\leq 0$ for $k=1,2$.
Now assume that $k\geq3$. Again from \eqref{eq: omega} we see that
$\omega_k'(x)\leq0$.
Therefore checking $\omega_k(x)\leq0$ is equivalent to checking that $\omega_k(0)\leq0$,
where
\begin{equation}\label{eq: FBB CA 34/P Lasko}
\omega_k(0) = \psi(k/2) - \psi(1) - \log k = \sum_{n=0}^{\infty} \Big(\frac{1}{n+1} - \frac{1}{n+k/2}\Big) - \log k.
\end{equation}
Since, for each integer $k\geq3$,
\[ \omega_{k+2}(0) - \omega_k(0) = \frac{2}{k} - \log\Big(1+\frac{2}{k}\Big) \geq 0, \]
the claim will follow once we verify that the limit $\lim_{k\to\infty}\omega_k(0)$ exists and that it is negative.
The asymptotics of the digamma function for large entries
follows from Binet's integral formula for $\psi$ \cite[p.34]{BW10} and reads
\[ \lim_{z\to\infty} (\psi(z) - \log z) =0, \]
so that \eqref{eq: FBB CA 34/P Lasko} together with \cite[(2.6.1)]{BW10} gives
\[ \lim_{k\to\infty}\omega_k(0) = - \psi(1) -\log 2 = \gamma - \log 2 < 0, \]
where $\gamma$ is the Euler--Mascheroni constant.
\end{proof}

\begin{remark}
With $\gamma_k(p)$ being as in Proposition~\ref{p: sharp asymptotics}, we may check that $\gamma_{k+l}(p)\leq\gamma_{k}(p)\gamma_{l}(p)$. This is necessary if Conjecture~\ref{Konj} is to hold. We leave the proof to the reader, reminding them that in the case of $k,l$ being even, this property was already established in \cite[Proposition 1.3]{D11-2}.
\end{remark}

\section{The third proof of Theorem~\ref{t: Lowest}: Riesz potentials}
\label{sec: Andrea proof}

This section gives yet another proof of the main result by proving its exact variant, Theorem~\ref{t: LowerExact}.
The key idea is that the complex Riesz transform $R$ acts nicely and explicitly on
Riesz potentials,
see Lemma~\ref{l: 1} below.
As opposed to the previous two proofs, this approach does not
handle any oscillatory integrals explicitly.
All the oscillation/cancellation is hidden in the formula for $R^{k}\left[(-\Delta)^{-\alpha}\delta_{0}\right]$ in Lemma~\ref{l: 1}, which is indirectly a consequence of the cancellation of the kernel of $R^{k}$, which occurs because we are taking complex derivatives.

\subsection{Action of $R^k$ on Riesz potentials}
Let $L$ denote the {\it positive} Laplacian on $\mathbb{R}^{2}$. Thus in particular, $Lf=-\Delta f=-(\pd^2_{x}f+\pd^2_{y}f)$ for $f$ in the Schwartz class. Denote by $\delta_{0}$ the Dirac distribution at $0$. For $z=x+iy$ define the complex derivatives by
$$
\partial_{z}=\frac{1}{2}(\partial_{x}-i\partial_{y}),\quad \partial_{\bar z}=\frac{1}{2}(\partial_{x}+i\partial_{y}).
$$
In this section we use the formula \eqref{eq: Delta repres}, thus representing the complex Hilbert transform as the operator
$$
R=(-2i\partial_{z}) L ^{-1/2}=-(\partial_{y} L ^{-1/2}+i\partial_{x} L ^{-1/2}).
$$
The operators $L ^{-\alpha}$ with $\alpha>0$ are called {\it Riesz potentials}, see \cite[Section 1.2]{G14b} for a more in-depth discussion.

Recall that the heat kernel $K_{t}=\exp(-t L )\delta_{0}$ (the convolution kernel of $\exp(-t L )$) on $\mathbb{R}^{2}=\mathbb{C}$  is given by
\begin{equation}\label{eq: Heatkernel}
K_{t}(z)=(4\pi t)^{-1}\exp\left(-|z|^{2}/4t\right),\quad z\in\mathbb{C},\quad t>0.
\end{equation}
For $0<\alpha<1$, the convolution kernel of the Riesz potential is the function
\begin{equation}
\label{eq: KRP}
 L ^{-\alpha}\delta_{0}(z)
 =\frac{1}{\Gamma(\alpha)}\int^{\infty}_{0}t^{\alpha-1}K_{t}(z)\wrt t=\frac{1}{4^{\alpha}\pi}\frac{\Gamma\left(1-\alpha\right)}{\Gamma\left(\alpha\right)}|z|^{2(\alpha-1)}.
\end{equation}
We now calculate the convolution kernels $R^{k} L ^{-\alpha}\delta_{0}$ of the operators $R^{k} L ^{-\alpha}$, $k\geq 0$, $\alpha>0$.
Since $\partial_{z}\bar z=0$, for every $k\in\mathbb{N}$ we have
\begin{equation}
\label{eq: 1}
\partial^{k}_{z}\exp\left(-|z|^{2}/4t\right)=(-1)^{k}\left(\bar z/4t\right)^{k}\exp\left(-|z|^{2}/4t\right).
\end{equation}
Apart from some technicalities, the previous formula allows one to calculate the convolution kernel of $R^{k} L ^{-\alpha}$ in the same way as the 
functions $L ^{-\gamma}\delta_{0}$, $\gamma>0$, are calculated (see, for example \cite[p.\,10]{G14b}).

\medskip
Although, by \eqref{eq: KRP}, the functions $L^{-\alpha}\delta_{0}$ do not belong to $L^p(\C)$ for any $1<p<\infty$, we still manage to define their image under $R^k$ as follows. Fix $\alpha\in(0,1)$, and indices $1<r_{1}<1/(1-\alpha)$ and $r_{2}>1/(1-\alpha)$. By \eqref{eq: KRP}, the
function
$L^{-\alpha}\delta_{0}=L^{-\alpha}\delta_{0}
\cdot 
\chi_{D}+L^{-\alpha}\delta_{0}
\cdot  
 \chi_{\C\setminus D}$ belongs to
$$
X:=L^{r_{1}}(\C)+L^{r_{2}}(\C).
$$
Therefore $R^{k}\left[L^{-\alpha}\delta_{0}\right]$ is well defined, because $R^{k}$ is bounded on $L^{p}(\C)$, for all $p>1$.

\begin{lemma}\label{l: 1}
For every $k\in\mathbb{N}$, $0<\alpha<1$ and $z\in\mathbb{C}\backslash\{0\}$ we have
$$
R^{k} \left[L ^{-\alpha}\delta_{0}\right](z)
=
\frac{(-i)^{k}}{4^{\alpha}\pi}\frac{\Gamma\left(k/2+1-\alpha\right)}{\Gamma\left(k/2+\alpha\right)}\left(\frac{\bar z}{\mod{z}}\right)^{k}\mod{z}^{2(\alpha-1)}.
$$
In particular, for $k=0$, we have the usual formula \eqref{eq: KRP}.
\end{lemma}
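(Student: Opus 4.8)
\emph{Proof plan.} The approach is to use the representation $R=(-2i\partial_{z})L^{-1/2}$ from \eqref{eq: Delta repres}. Since $\partial_{z}$ and $L^{-1/2}$ are Fourier multipliers they commute, so $R^{k}=(-2i)^{k}\partial_{z}^{k}L^{-k/2}$ and therefore $R^{k}L^{-\alpha}=(-2i)^{k}\partial_{z}^{k}L^{-(k/2+\alpha)}$. The plan is to compute the convolution kernel of this operator, which is precisely $R^{k}[L^{-\alpha}\delta_{0}]\in X$, by subordinating $L^{-(k/2+\alpha)}$ to the heat semigroup and carrying out the $k$-fold $\partial_{z}$-differentiation explicitly via the Cauchy--Riemann identity \eqref{eq: 1}. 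This computation is completely parity-insensitive, so it treats even and odd $k$ at once.

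First I would record the distributional identity $R^{k}[L^{-\alpha}\delta_{0}]=(-2i)^{k}\partial_{z}^{k}[L^{-(k/2+\alpha)}\delta_{0}]$: on the Fourier side both sides equal $(\overline{\zeta}/|\zeta|)^{k}(2\pi|\zeta|)^{-2\alpha}$, which is a bona fide tempered distribution because $-2\alpha\in(-2,0)$; and on the spatial side the right-hand side is the $L^{1}_{\mathrm{loc}}$ function
\[
\frac{(-2i)^{k}}{\Gamma(k/2+\alpha)}\int_{0}^{\infty}t^{k/2+\alpha-1}\,\partial_{z}^{k}K_{t}(z)\,\wrt t ,
\]
where $K_{t}$ is the heat kernel \eqref{eq: Heatkernel}. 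By \eqref{eq: 1} we have $\partial_{z}^{k}K_{t}(z)=(-1)^{k}(4t)^{-k}\,\overline{z}^{\,k}K_{t}(z)$, and the extra factor $t^{-k}$ is exactly what makes the $t$-integral converge at $t=\infty$ although $k/2+\alpha$ may be $\geq1$ --- so the genuine Riesz potential $L^{-(k/2+\alpha)}\delta_{0}$ is never needed as a function. Substituting this and evaluating
\[
\int_{0}^{\infty}t^{k/2+\alpha-2-k}\,e^{-|z|^{2}/4t}\,\wrt t
=\Big(\frac{|z|^{2}}{4}\Big)^{\!\alpha-k/2-1}\Gamma\!\big(k/2+1-\alpha\big)
\]
by the change of variables $u=|z|^{2}/(4t)$ (legitimate since $k/2+1-\alpha>0$) leaves only the bookkeeping of the numerical constants and of the homogeneity factor $\overline{z}^{\,k}|z|^{2(\alpha-k/2-1)}=(\overline{z}/|z|)^{k}|z|^{2(\alpha-1)}$, which produces the stated formula. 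Putting $k=0$ recovers \eqref{eq: KRP} as a sanity check.

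The only genuinely delicate point --- the ``technicalities'' hinted at just after \eqref{eq: 1} --- is the rigorous justification of the identity in the previous paragraph: $R^{k}[L^{-\alpha}\delta_{0}]$ is a priori defined only through the $L^{p}$-boundedness of $R^{k}$ on $X$, and one must reconcile this with the explicit pointwise integral. Since $L^{-(k/2+\alpha)}\delta_{0}$ fails to be an honest function when $k/2+\alpha\geq1$, I would run the subordination-and-differentiation step at the level of tempered distributions, computing Fourier transforms and using $\widehat{\partial_{z}^{k}K_{t}}(\zeta)=(\pi i\overline{\zeta})^{k}e^{-4\pi^{2}t|\zeta|^{2}}$ together with Fubini to identify the inverse transform with the integral above; separately one notes that the $L^{p}$-extension of $R^{k}$ agrees with its action on tempered distributions by approximating $L^{-\alpha}\delta_{0}\in X$ by Schwartz functions in the $X$-norm. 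Once this is in place, the rest is routine manipulation of Gamma functions.
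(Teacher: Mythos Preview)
Your argument is correct and the computational core --- subordinate $L^{-(k/2+\alpha)}$ to the heat semigroup, apply \eqref{eq: 1}, then evaluate the resulting Gamma integral --- is exactly what the paper does. The only difference lies in how the ``technicalities'' are handled: the paper regularizes by replacing $L$ with $\epsilon I+L$ throughout (so one works with the Bessel kernels $(\epsilon I+L)^{-\alpha}\delta_{0}\in L^{p}$ and the local transforms $R_{\epsilon}=(-2i\partial_{z})(\epsilon I+L)^{-1/2}$, computes $R_{\epsilon}^{k}[(\epsilon I+L)^{-\alpha}\delta_{0}]$ explicitly, and only then sends $\epsilon\downarrow0$ in $X$), whereas you propose to work directly in $\mathcal{S}'(\mathbb{C})$ via Fourier transforms and reconcile with the $X$-definition by density. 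Your route is a bit more streamlined and makes transparent why $L^{-(k/2+\alpha)}\delta_{0}$ never needs to exist as a function; the paper's $\epsilon$-regularization is more hands-on and keeps everything in classical function spaces, avoiding any appeal to the $\mathcal{S}'$-action of $R^{k}$. Either way the bookkeeping is identical.
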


\begin{proof}
For $\beta,\epsilon>0$ define the operators
$$
T^{\beta}_{\epsilon}:=L^{\beta}(\epsilon I+L)^{-\beta}=\left[L(\epsilon I+L)^{-1}\right]^{\beta},
$$
and the {\it local complex Riesz transforms}
\begin{equation}
\label{eq: HilbLoc}
R_{\epsilon}:=R\circ T^{1/2}_{\epsilon}=(-2i\partial_{z})(\epsilon I+ L )^{-1/2},\quad \epsilon>0.
\end{equation}
For every $p>1$, the operators $T^{\beta}_{\epsilon}$ converge to the identity in the strong operator topology of $L^{p}(\C)$, as $\epsilon\downarrow 0$. Therefore,
\begin{equation}
\label{eq: Rtrunc}
\lim_{\epsilon\searrow 0}R^{k}_{\epsilon}f=\lim_{\epsilon\searrow 0}R^{k}T^{k/2}_{\epsilon}f=R^{k}f\quad \textrm{in}\ \ L^{p}(\C),\quad \forall f\in L^{p}(\mathbb{C}),\quad \forall p>1.
\end{equation}

Now consider the convolution kernels of the {\it Bessel potentials} $(\epsilon I+L)^{-\alpha}$:
\begin{equation}
\label{eq: Bessel}
(\epsilon I+L)^{-\alpha}\delta_{0}(z)
 =\frac{1}{\Gamma(\alpha)}\int^{\infty}_{0}t^{\alpha-1}K_{t}(z)e^{-\epsilon t}\wrt t,\quad z\in\C,\quad \epsilon>0.
\end{equation}
The functions $(\epsilon I+L)^{-\alpha}\delta_{0}$ converge to $L^{-\alpha}\delta_{0}$ pointwise in $\C\setminus\{0\}$ as $\epsilon\downarrow 0$, and are uniformly dominated by $L^{-\alpha}\delta_{0}\in X$. It thus follows from Lebesgue's dominated convergence theorem that
$$
\lim_{\epsilon\downarrow 0}(\epsilon I +L)^{-\alpha}\delta_{0}=L^{-\alpha}\delta_{0}\quad \textrm{in}\ \ X.
$$
Therefore, by \eqref{eq: Rtrunc},
\begin{equation}
\label{eq: KRPbis}
R^{k}\left[L^{-\alpha}\delta_{0}\right]=\lim_{\epsilon\downarrow 0}R^{k}_{\epsilon}\left[(\epsilon I+L)^{-\alpha}\delta_{0}\right]\quad \textrm{in}\ \ X.
\end{equation}
It follows from the identity
\begin{equation}\label{eq: K norm}
\norm{K_{t}}{{}l{}}=t^{1/l-1}\norm{K_{1}}{{}l{}},\quad 1\leq l\leq \infty,
\end{equation}
that $(\epsilon I+L)^{-\alpha}\delta_{0}\in L^{p}(\C)$ whenever $1\leq p<1/(1-\alpha)$.

Fix $p_{0}\in (1,1/(1-\alpha))$ and consider an approximation of the identity $\psi_{\kappa}(z)=\kappa^{-2}\psi(z/\kappa)$, where $\psi\in C^{\infty}_{c}(\C)$ and $\int_{\C}\psi=1$. Since $R^{k}_{\epsilon}$ is bounded on $L^{p_{0}}(\C)$, we have
\begin{align*}
& R^{k}_{\epsilon}\left[(\epsilon I+L)^{-\alpha}\delta_{0}\right]=\lim_{\kappa\downarrow 0}R^{k}_{\epsilon}\left[(\epsilon I+L)^{-\alpha}\delta_{0}*\psi_{\kappa}\right] \\
& =\lim_{\kappa\downarrow 0}R^{k}_{\epsilon}(\epsilon I+L)^{-\alpha}\psi_{\kappa}=\lim_{\kappa\downarrow 0}\left[R^{k}_{\epsilon}(\epsilon I+L)^{-\alpha}\delta_{0}\right]*\psi_{\kappa}
\end{align*}
from which we deduce
\begin{equation}\label{eq: bn1}
R^{k}_{\epsilon}\left[(\epsilon I+L)^{-\alpha}\delta_{0}\right]=R^{k}_{\epsilon}(\epsilon I+L)^{-\alpha}\delta_{0}=\left[(-2i\partial_{z})^{k}(\epsilon I+L)^{-\alpha-k/2}\right]\delta_{0}\quad \textrm{in}\ \ \cS^{\prime}(\C).
\end{equation}
By combining \eqref{eq: bn1}, \eqref{eq: Bessel} and \eqref{eq: 1}, we obtain
\begin{align*}
& i^{k}R^{k}_{\epsilon} \left[(\epsilon I+L)^{-\alpha}\delta_{0}\right](z) \\
&=\frac{1}{\Gamma\left(k/2+\alpha\right)}(2\partial_{z})^{k}\int^{\infty}_{0}t^{k/2+\alpha-1}K_{t}(z)e^{-\epsilon t}\wrt t\\
&=\frac{1}{\Gamma\left(k/2+\alpha\right)}\int^{\infty}_{0}t^{k/2+\alpha-1}(2\partial_{z})^{k}K_{t}(z)e^{-\epsilon t}\wrt t \\
&=\frac{(-1)^{k}}{4\pi}\frac{1}{\Gamma\left(k/2+\alpha\right)}\left(\frac{\bar z}{2}\right)^{k}\int^{\infty}_{0}t^{-k/2+\alpha-1}e^{-|z|^{2}/4t}e^{-\epsilon t}\frac{\wrt t}{t} \\
&=\frac{(-1)^{k}}{4\pi}\frac{1}{\Gamma\left(k/2+\alpha\right)}\left(\frac{\bar z}{|z|}\right)^{k}2^{-k}|z|^{2(\alpha-1)}\int^{\infty}_{0}t^{-k/2+\alpha-1}e^{-1/4t}e^{-\epsilon t|z|^{2}}\frac{\wrt t}{t}.
\end{align*}
As $\epsilon \downarrow 0$ the integral in the last line converges to
$$
\int^{\infty}_{0}t^{-\beta}e^{-1/4t}\frac{\wrt t}{t}=4^{\beta}\Gamma(\beta),\quad \beta=k/2+1-\alpha,
$$
for all $z\in\C$. The lemma now follows from \eqref{eq: KRPbis}.
\end{proof}

\begin{remark}
When $\alpha\geq 1$ the function $L^{-\alpha}\delta_{0}$ is not in $L^{p}(\C\setminus D)$ for any $p>1$, and we cannot define $R^{k}L^{-\alpha}\delta_{0}$ as we did above in the case $0<\alpha<1$. However, for $0<\alpha<
k/2+1$, the very same argument that we used in the proof of Lemma~\ref{l: 1} shows that
$$
\lim_{\epsilon\downarrow 0}R^{k}_{\epsilon} \left[(\epsilon I+L)^{-\alpha}\delta_{0}\right](z)
=
\frac{(-i)^{k}}{4^{\alpha}\pi}\frac{\Gamma\left(k/2+1-\alpha\right)}{\Gamma\left(k/2+\alpha\right)}\left(\frac{\bar z}{\mod{z}}\right)^{k}\mod{z}^{2(\alpha-1)},\quad \forall z\in\C\setminus\{0\},
$$
and we can take the right-hand side of the identity above as the definition of $R^{k}[L^{-\alpha}\delta_{0}]$ in this range of $\alpha$'s.
\end{remark}

\subsection{Relation to the numbers $\gamma_{k}(p)$}
Let the numbers $\gamma_{k}(p)$ be defined as in \eqref{eq: gammakp}.
\begin{lemma}\label{l: 2}
 Let $p>1$ and $q=p/(p-1)$. For every $\epsilon>0$ define $\alpha_{\epsilon}=1/p+\epsilon$. For every $r>0$ set $B_{r}=B_{\mathbb{C}}(0,r)$. We have
 $$
\gamma_{k}(p) = \lim_{\epsilon\searrow 0}\frac{\norm{R^{k} L ^{-\alpha_{\epsilon}}\delta_{0}}{L^{q}(B_{r})}}{\norm{ L ^{-\alpha_{\epsilon}}\delta_{0}}{L^{q}(B_{r})}}
 $$
 and the quotient inside the limit does not depend on $r>0$.
\end{lemma}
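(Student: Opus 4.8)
The plan is to read both norms off the explicit kernel formulas and to notice that the quotient is, in fact, a constant already \emph{before} any limit is taken, so that no interchange of limits with integration is ever needed.

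First I would recall that, by \eqref{eq: KRP},
$$
L^{-\alpha}\delta_0(z)=\frac{1}{4^\alpha\pi}\frac{\Gamma(1-\alpha)}{\Gamma(\alpha)}\,|z|^{2(\alpha-1)},
$$
while Lemma~\ref{l: 1} (together with the subsequent Remark, which extends the formula to all $0<\alpha<k/2+1$) gives
$$
R^k\big[L^{-\alpha}\delta_0\big](z)=\frac{(-i)^k}{4^\alpha\pi}\frac{\Gamma(k/2+1-\alpha)}{\Gamma(k/2+\alpha)}\Big(\frac{\bar z}{|z|}\Big)^k|z|^{2(\alpha-1)}.
$$
Since $|(-i)^k|=|(\bar z/|z|)^k|=1$ and, for $0<\alpha<1$, the scalar $\Gamma(1-\alpha)/\Gamma(\alpha)$ is positive, taking moduli shows that both $|L^{-\alpha}\delta_0(z)|$ and $|R^k[L^{-\alpha}\delta_0](z)|$ are positive constant multiples of $|z|^{2(\alpha-1)}$, with the $z$-independent ratio
$$
\frac{|R^k[L^{-\alpha}\delta_0](z)|}{|L^{-\alpha}\delta_0(z)|}=\frac{\Gamma(\alpha)\,\Gamma(k/2+1-\alpha)}{\Gamma(1-\alpha)\,\Gamma(k/2+\alpha)}\qquad\text{for all }z\in\C\setminus\{0\}.
$$

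Next I would specialize to $\alpha=\alpha_\epsilon=1/p+\epsilon$, restricting to $\epsilon$ small enough that $\alpha_\epsilon<1$ (any $\epsilon\in(0,1-1/p)$ works, or $\epsilon\in(0,k/2+1-1/p)$ if one invokes the Remark), and check local integrability: the power $|z|^{2(\alpha_\epsilon-1)}$ lies in $L^q(B_r)$ precisely when $2(\alpha_\epsilon-1)q>-2$, i.e. $\alpha_\epsilon>1-1/q=1/p$, which holds since $\epsilon>0$; and there is no issue at infinity because $B_r$ is bounded. Hence both $L^{-\alpha_\epsilon}\delta_0$ and $R^k[L^{-\alpha_\epsilon}\delta_0]$ belong to $L^q(B_r)$, and since their moduli differ merely by the multiplicative constant above, the ratio of their $L^q(B_r)$-norms equals that same constant for \emph{every} $r>0$; in particular it is independent of $r$. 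Finally, letting $\epsilon\searrow0$ and using that $\Gamma$ is continuous and nonzero at the positive points $1/p$, $1/q$, $k/2+1/p$ and $k/2+1-1/p=k/2+1/q$, we obtain
$$
\lim_{\epsilon\searrow0}\frac{\|R^k L^{-\alpha_\epsilon}\delta_0\|_{L^q(B_r)}}{\|L^{-\alpha_\epsilon}\delta_0\|_{L^q(B_r)}}=\frac{\Gamma(1/p)\,\Gamma(k/2+1/q)}{\Gamma(1/q)\,\Gamma(k/2+1/p)}=\gamma_k(p),
$$
as claimed.

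There is essentially no genuine obstacle here beyond bookkeeping. The only points deserving a word of care are that $\alpha_\epsilon$ remains in the range of validity of the kernel formula of Lemma~\ref{l: 1} (true for all small $\epsilon$), and that the exponent $2(\alpha_\epsilon-1)$ is large enough for local $L^q$-integrability near the origin, which is exactly the condition $\epsilon>0$ and also pinpoints why the ratio $\alpha_\epsilon=1/p+\epsilon$ is the natural choice of approximating exponent.
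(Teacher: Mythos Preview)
Your proof is correct and follows essentially the same route as the paper: use Lemma~\ref{l: 1} together with \eqref{eq: KRP} to see that $|R^k L^{-\alpha_\epsilon}\delta_0|$ and $|L^{-\alpha_\epsilon}\delta_0|$ are the same power of $|z|$ up to an explicit constant factor, verify the local $L^q$-integrability condition $\alpha_\epsilon>1/p$, and then let $\epsilon\searrow0$. The paper's write-up is slightly terser but the argument is identical.
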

\begin{proof}
The result directly follows from Lemma~\ref{l: 1}. Indeed, it gives
$$
|R^{k} L ^{-\alpha_\epsilon
}\delta_{0}(z)|
=
\frac{\Gamma\left(1/p+\epsilon\right)\Gamma\left(k/2+1/q-\epsilon\right)}{\Gamma\left(1/q-\epsilon\right)\Gamma\left(k/2+1/p+\epsilon\right)} L^{-\alpha_\epsilon}\delta_{0}(z)\,.
$$
Thus, because of \eqref{eq: KRP}, it remains to verify that $|\cdot|^{2(\alpha_\epsilon-1)}\in L^q(B_r)$. A rapid calculation shows that, whenever $r>0$ and $q\vartheta+2>0$,
\begin{equation}
\label{eq: XcX}
\norm{|\cdot|^{\vartheta}}{L^{q}(B_{r})}=\left(\frac{2\pi}{q\vartheta+2}\right)^{1/q}r^{\vartheta+2/q}<\infty\,.
\end{equation}
Applying this for $\vartheta=2(\alpha_\epsilon-1)=2(\epsilon-1/q)$ finishes the proof.
\end{proof}

\subsection{Truncated kernels of Riesz potentials}
For estimating from below the $q$-norm of $R^{k}$ by means of Lemma~\ref{l: 2} the problems are:
\begin{enumerate}[(i)]
\item
inside the limit we need to integrate
over all $\mathbb{C}$ and not just on the ball $B_{r}$, yet
\item
the functions $ L ^{-\alpha_{\epsilon}}\delta_{0}$ do not even lie in $L^{q}(\mathbb{C}\setminus B_{r})$.
\end{enumerate}
We fix the two above problems by means of truncations in the time variable: this is the ``kernel version'' of the truncation in the Fourier domain, used in Section~\ref{s: Vjeko's}.

In the calculations that will follow, the radius $r$ does not play any role (finally, we can choose any $r>0$), but we decided to leave it so as to clarify the calculations.
\medskip

The truncated kernel of $L^{-\alpha}$ (for $\alpha\in (0,1)$) and its complement are defined by
\begin{align}
\label{eq: TRP}
&g^{r^{2}}_{\alpha}(z)=\frac{1}{\Gamma(\alpha)}\int^{r^{2}}_{0}t^{\alpha-1}K_{t}(z)\wrt t,\\
&g^{\alpha}_{r^{2}}(z)=\frac{1}{\Gamma(\alpha)}\int^{\infty}_{r^{2}}t^{\alpha-1}K_{t}(z)\wrt t,\nonumber
\end{align}
 where $K_{t}$ is the heat kernel given by \eqref{eq: Heatkernel}. We clearly have
 \begin{equation}\label{eq: identity}
 g^{r^{2}}_{\alpha}(z)+g^{\alpha}_{r^{2}}(z)= L ^{-\alpha}\delta_{0}(z),\quad \forall z\in\mathbb{C}\setminus\{0\}.
 \end{equation}

\begin{lemma}
For every $\alpha\in (0,1)$ and $r>0$ we have
\begin{equation}\label{eq: ubTRP}
g^{r^{2}}_{\alpha}(z)\leq 2^{1-\alpha}\exp\left(-|z|^{2}/8r^{2}\right) L ^{-\alpha}\delta_{0}(z).
\end{equation}
In particular, for $r>0$, $\alpha\in (1/p,1)$, $p>2$ and $q=p/(p-1)$, we have
$g^{r^{2}}_{\alpha}\in L^{q}(\mathbb{C})$.
Moreover, writing $\alpha_{\epsilon}=1/p+\epsilon$, $0<\epsilon<1/q$, we have the estimate
\begin{equation}\label{eq: updom}
\norm{g^{r^{2}}_{\alpha_{\epsilon}}}{L^{q}(\mathbb{C})}\leq (1+C_{0}\sqrt{\epsilon})\norm{ L ^{-\alpha_{\epsilon}}\delta_{0}}{L^{q}(B_{r})},
\end{equation}
where $C_{0}>0$ is a universal constant that does not depend on $p$, $r$ and $\epsilon$.
\end{lemma}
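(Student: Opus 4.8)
The plan is to reduce the whole lemma to a closed-form expression for the truncated kernel together with two elementary inequalities for an incomplete gamma function. Performing the change of variables $s=|z|^{2}/(4t)$ in \eqref{eq: TRP} and using \eqref{eq: Heatkernel}, one gets for $z\in\C\setminus\{0\}$ that
\[
g^{r^{2}}_{\alpha}(z)=\frac{|z|^{2(\alpha-1)}}{4^{\alpha}\pi\,\Gamma(\alpha)}\,\phi_{\alpha}\!\Big(\frac{|z|^{2}}{4r^{2}}\Big),
\qquad
\phi_{\alpha}(a):=\int_{a}^{\infty}s^{-\alpha}e^{-s}\wrt s .
\]
Since $\phi_{\alpha}(0)=\Gamma(1-\alpha)$, comparison with \eqref{eq: KRP} rewrites this as $g^{r^{2}}_{\alpha}(z)=\Gamma(1-\alpha)^{-1}\,\phi_{\alpha}(|z|^{2}/(4r^{2}))\,L^{-\alpha}\delta_{0}(z)$. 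All three assertions of the lemma will be read off from this identity.

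For \eqref{eq: ubTRP} I would observe that $\phi_{\alpha}(a)\le e^{-a/2}\int_{a}^{\infty}s^{-\alpha}e^{-s/2}\wrt s\le e^{-a/2}\int_{0}^{\infty}s^{-\alpha}e^{-s/2}\wrt s=2^{1-\alpha}e^{-a/2}\,\Gamma(1-\alpha)$, the last step by the substitution $s=2v$; feeding $a=|z|^{2}/(4r^{2})$ into the identity above gives exactly \eqref{eq: ubTRP}. The $L^{q}$-membership is then immediate, since by \eqref{eq: ubTRP} and \eqref{eq: KRP} one has $0\le g^{r^{2}}_{\alpha}(z)\lesssim e^{-|z|^{2}/8r^{2}}\,|z|^{2(\alpha-1)}$, and the right-hand side belongs to $L^{q}(\C)$ precisely when $2q(\alpha-1)+2>0$, i.e.\ $\alpha>1/p$, the Gaussian factor handling integrability at infinity.

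The substantive part is \eqref{eq: updom}, and the main obstacle is to resist the two lossy shortcuts: estimating $\norm{g^{r^{2}}_{\alpha_{\epsilon}}}{L^{q}(\C)}$ through Minkowski's inequality and \eqref{eq: K norm} produces a spurious $\epsilon^{-1}$ in place of the correct $\epsilon^{-1/q}$, while using the pointwise bound \eqref{eq: ubTRP} drags along an uncontrollable factor $2^{1-q\epsilon}\approx2$. Instead one must compute $\norm{g^{r^{2}}_{\alpha_{\epsilon}}}{L^{q}(\C)}$ essentially exactly. Write $\alpha_{\epsilon}=1/p+\epsilon$, $0<\epsilon<1/q$, so $q\epsilon<1$ and $1-\alpha_{\epsilon}=1/q-\epsilon\in(0,1)$. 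Applying \eqref{eq: XcX} with $\vartheta=2(\alpha_{\epsilon}-1)$, for which $q\vartheta+2=2q\epsilon$ and $\vartheta+2/q=2\epsilon$, together with \eqref{eq: KRP}, gives $\norm{L^{-\alpha_{\epsilon}}\delta_{0}}{L^{q}(B_{r})}^{q}=\big(\tfrac{\Gamma(1-\alpha_{\epsilon})}{4^{\alpha_{\epsilon}}\pi\Gamma(\alpha_{\epsilon})}\big)^{q}\tfrac{\pi}{q\epsilon}\,r^{2q\epsilon}$; on the other hand, polar coordinates in the closed-form identity followed by the substitution $a=\rho^{2}/(4r^{2})$ and the algebraic identities $2q(\alpha_{\epsilon}-1)+1=2q\epsilon-1$ and $q(\alpha_{\epsilon}-1)=q\epsilon-1$ give $\norm{g^{r^{2}}_{\alpha_{\epsilon}}}{L^{q}(\C)}^{q}=\big(\tfrac{1}{4^{\alpha_{\epsilon}}\pi\Gamma(\alpha_{\epsilon})}\big)^{q}\pi\,2^{2q\epsilon}r^{2q\epsilon}\int_{0}^{\infty}a^{q\epsilon-1}\phi_{\alpha_{\epsilon}}(a)^{q}\wrt a$. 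The common prefactor cancels on dividing, leaving
\[
\bigg(\frac{\norm{g^{r^{2}}_{\alpha_{\epsilon}}}{L^{q}(\C)}}{\norm{L^{-\alpha_{\epsilon}}\delta_{0}}{L^{q}(B_{r})}}\bigg)^{\!q}
= 2^{2q\epsilon}\,\frac{q\epsilon}{\Gamma(1-\alpha_{\epsilon})^{q}}\int_{0}^{\infty}a^{q\epsilon-1}\phi_{\alpha_{\epsilon}}(a)^{q}\wrt a .
\]

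It remains to show the weighted integral is $\Gamma(1-\alpha_{\epsilon})^{q}+O(\epsilon)$, which is where the cancellation really happens: splitting at $a=1$, on $(0,1)$ we bound $\phi_{\alpha_{\epsilon}}(a)\le\phi_{\alpha_{\epsilon}}(0)=\Gamma(1-\alpha_{\epsilon})$, so the $q\epsilon$ prefactor exactly absorbs the (otherwise logarithmically divergent) $\int_{0}^{1}a^{q\epsilon-1}\wrt a=1/(q\epsilon)$ and that piece contributes $\le\Gamma(1-\alpha_{\epsilon})^{q}$; on $(1,\infty)$ we use $\phi_{\alpha_{\epsilon}}(a)\le\int_{a}^{\infty}e^{-s}\wrt s=e^{-a}$ (as $s^{-\alpha_{\epsilon}}\le1$ there) and $a^{q\epsilon-1}\le1$, so that piece contributes $\le q\epsilon\int_{1}^{\infty}e^{-qa}\wrt a=\epsilon e^{-q}\le\epsilon$. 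Now invoking $p>2$, hence $q\in(1,2)$: since $\Gamma$ decreases on $(0,1)$ with $\Gamma(1)=1$ we have $\Gamma(1-\alpha_{\epsilon})^{q}\ge1$, and by convexity $2^{2q\epsilon}=4^{q\epsilon}\le1+3q\epsilon\le1+6\epsilon$ on the relevant range; hence the displayed ratio to the power $q$ is at most $(1+6\epsilon)(1+\epsilon)\le1+13\epsilon$ for $\epsilon\in(0,1)$. Taking $q$-th roots (using $(1+x)^{1/q}\le1+x$ for $q\ge1$) and then $13\epsilon\le13\sqrt{\epsilon}$ for $\epsilon\in(0,1)$ yields \eqref{eq: updom} with the absolute constant $C_{0}=13$ — in fact the bookkeeping gives the slightly stronger $1+C_{0}\epsilon$, and the only genuine subtlety beyond routine calculation is ensuring, as just done, that $C_{0}$ does not depend on $p$, $r$ or $\epsilon$.
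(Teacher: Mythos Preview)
Your proof is correct. The closed-form identity $g^{r^{2}}_{\alpha}(z)=\Gamma(1-\alpha)^{-1}\phi_{\alpha}(|z|^{2}/4r^{2})\,L^{-\alpha}\delta_{0}(z)$ is a clean way to organize all three assertions, and the bounds on $\phi_{\alpha}$ are checked carefully. For \eqref{eq: ubTRP} your argument is the same as the paper's, merely carried out after the change of variable $s=|z|^{2}/4t$ rather than before: the paper writes $e^{-|z|^{2}/4t}\le e^{-|z|^{2}/8r^{2}}e^{-|z|^{2}/8t}$ on $0<t<r^{2}$ and then substitutes $s=2t$, which is exactly your inequality $e^{-s}\le e^{-a/2}e^{-s/2}$ on $s\ge a$ followed by $s=2v$.

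For \eqref{eq: updom} the routes genuinely differ. The paper splits the \emph{spatial} domain as $B_{r}\cup(\C\setminus B_{r})$, uses the trivial pointwise bound $g^{r^{2}}_{\alpha_{\epsilon}}\le L^{-\alpha_{\epsilon}}\delta_{0}$ on $B_{r}$, and on the complement invokes \eqref{eq: ubTRP} together with the explicit value of $\norm{L^{-\alpha_{\epsilon}}\delta_{0}}{L^{q}(B_{r})}\sim\epsilon^{-1/q}\ge\epsilon^{-1/2}$. You instead compute the full norm as a one-dimensional integral in the variable $a=|z|^{2}/4r^{2}$ and split at $a=1$ (which corresponds to $|z|=2r$); the $(0,1)$ piece plays the role of the paper's $B_{r}$ contribution and the $(1,\infty)$ piece that of the exterior. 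Your bookkeeping is tight enough to give the sharper $1+C_{0}\epsilon$, whereas the paper's use of the crude lower bound $\epsilon^{-1/q}\ge\epsilon^{-1/2}$ is what produces the $\sqrt{\epsilon}$ in the statement; either suffices for the application in Section~\ref{sec: Andrea proof}.
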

\begin{proof}
The inequality \eqref{eq: ubTRP} follows from the definitions of $g^{r^{2}}_{\alpha}$, the subordination formula \eqref{eq: KRP} for $ L ^{\alpha}$, the explicit form \eqref{eq: Heatkernel} of the heat kernel, the estimate
$$
\exp\left(-|z|^{2}/4t\right)\leq \exp\left(-|z|^{2}/8r^{2}\right)\exp\left(-|z|^{2}/8t\right),\quad \forall 0<t<r^{2},
$$
and the change of variable $2t=s$ in the integral.

The combination of \eqref{eq: KRP} and \eqref{eq: XcX} shows
that $ L ^{-\alpha}\delta_{0}\in L^{q}_{\rm loc}(\mathbb{C})$ whenever $1/p<\alpha<1$, while the exponential factor in \eqref{eq: ubTRP} gives the integrability at infinity of $g^{r^{2}}_{\alpha}$, for all $\alpha\in (0,1)$.

\medskip
We now prove \eqref{eq: updom}. By the definition of $g^{r^{2}}_{\alpha_{\epsilon}}$ we have
$$
g^{r^{2}}_{\alpha_{\epsilon}}(z)\leq L ^{-\alpha_{\epsilon}}\delta_{0}(z), \quad \forall z\in\mathbb{C}\setminus\{0\}.
$$
Therefore, it suffices to show that there exists $C_{0}>0$ for which
$$
\norm{g^{r^{2}}_{\alpha_{\epsilon}}}{L^{q}(\mathbb{C}\setminus B_{r})}\leq C_{0}\sqrt{\epsilon}\norm{ L ^{-\alpha_{\epsilon}}\delta_{0}}{L^{q}(B_{r})}.
$$
In order to prove the above inequality, observe that by combining the right-hand side of \eqref{eq: KRP} with \eqref{eq: ubTRP} we obtain
$$
\aligned
\norm{g^{r^{2}}_{\alpha_{\epsilon}}}{L^{q}(\mathbb{C}\setminus B_{r})}
& \leq
2^{2-1/p-3\alpha_\epsilon}\pi^{-1/p}
 \frac{\Gamma(1-\alpha_{\epsilon})}{\Gamma(\alpha_{\epsilon})}
  \left(\int^{\infty}_{r}\rho^{2+2q(\alpha_{\epsilon}-1)}e^{-q\rho^{2}/8r^{2}}\frac{\wrt \rho}{\rho}\right)^{1/q}\\
 & \leq C_{1}\frac{\Gamma(1/q-\epsilon)}{\Gamma(1/p+\epsilon)}\,r^{2\epsilon}.
\endaligned
$$
On the other hand,
by \eqref{eq: KRP}  and \eqref{eq: XcX}, applied with $\vartheta=2(\alpha_\epsilon-1)=2(\epsilon-1/q)$, we have
$$
\norm{ L ^{-\alpha_{\epsilon}}\delta_{0}}{L^{q}(B_{r})}
=
\frac{1}{4^{\alpha_{\epsilon}}\pi}\frac{\Gamma(1/q-\epsilon)}{\Gamma(1/p+\epsilon)}\left(\frac{\pi}{q\epsilon}\right)^{1/q}r^{2\epsilon}\geq \epsilon^{-1/2}C_{2}\frac{\Gamma(1/q-\epsilon)}{\Gamma(1/p+\epsilon)}\, r^{2\epsilon}.
$$
Hence we can take $C_{0}=C_{2}/C_{1}$.
\end{proof}
\begin{lemma}\label{l: A2}
Let $r>0$, $p>2$, $q=p/(p-1)$ and $\alpha\in (0,1)$. Then there exists
a universal constant $C_3>0$
such that
$$
\norm{R^{k}g^{\alpha}_{r^{2}}}{L^{q}(B_{r})}\leq \frac{
C_3}{\Gamma(\alpha)}\,r^{2(\alpha-1)+2/q}.
$$
\end{lemma}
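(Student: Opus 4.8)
The plan is to first derive a transparent integral representation of $R^{k}g^{\alpha}_{r^{2}}$, analogous to Lemma~\ref{l: 1}, and then to estimate it by brute force, taking care that no factor depending on $k$, $p$ or $\alpha$ survives in front of the claimed bound. Writing $g^{\alpha}_{r^{2}}=\Gamma(\alpha)^{-1}\int_{r^{2}}^{\infty}t^{\alpha-1}K_{t}\wrt t$, the integral on the right converges in $L^{s}(\C)$ for every $s>1/(1-\alpha)$ by \eqref{eq: K norm}, while each $R^{k}_{\epsilon}$ is bounded on $L^{s}(\C)$; hence $R^{k}_{\epsilon}g^{\alpha}_{r^{2}}=\Gamma(\alpha)^{-1}\int_{r^{2}}^{\infty}t^{\alpha-1}R^{k}_{\epsilon}K_{t}\wrt t$. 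As in the proof of Lemma~\ref{l: 1} one computes $R^{k}_{\epsilon}K_{t}$ from $R^{k}_{\epsilon}=(-2i\partial_{z})^{k}(\epsilon I+L)^{-k/2}$, the subordination identity $(\epsilon I+L)^{-k/2}K_{t}=\Gamma(k/2)^{-1}\int_{0}^{\infty}s^{k/2-1}e^{-\epsilon s}K_{s+t}\wrt s$, and \eqref{eq: 1}; the factor $e^{-\epsilon s}$ keeps everything in $L^{s}$ for all $s$, so the termwise differentiation is legitimate. After the substitution $w=s+t$ and an application of Tonelli's theorem to exchange the $t$- and $w$-integrations, letting $\epsilon\downarrow 0$ via \eqref{eq: Rtrunc} (and passing to an a.e.\ convergent subsequence) gives, for $z\in\C\setminus\{0\}$,
\[
R^{k}g^{\alpha}_{r^{2}}(z)=\frac{(2i)^{k}\bar z^{k}}{4^{k}\Gamma(\alpha)\Gamma(k/2)}\int_{r^{2}}^{\infty}w^{-k}K_{w}(z)\bigg(\int_{r^{2}}^{w}t^{\alpha-1}(w-t)^{k/2-1}\wrt t\bigg)\wrt w.
\]

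Now the truncation pays off. Since $\alpha<1$ and $t\geq r^{2}$ we have $t^{\alpha-1}\leq r^{2(\alpha-1)}$, whence
\[
\int_{r^{2}}^{w}t^{\alpha-1}(w-t)^{k/2-1}\wrt t\leq r^{2(\alpha-1)}\int_{0}^{w}(w-t)^{k/2-1}\wrt t=\frac{2}{k}\,r^{2(\alpha-1)}w^{k/2}.
\]
It is essential to bound $t^{\alpha-1}$ this way rather than to enlarge the domain to $[0,w]$: the latter would produce a Beta function $B(\alpha,k/2)=\Gamma(\alpha)\Gamma(k/2)/\Gamma(\alpha+k/2)$ and thereby cancel the $\Gamma(\alpha)^{-1}$, which must be preserved. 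Using $\tfrac{2}{k}\Gamma(k/2)^{-1}=\Gamma(k/2+1)^{-1}$ and $K_{w}(z)=(4\pi w)^{-1}e^{-|z|^{2}/4w}$ we obtain the pointwise bound
\[
|R^{k}g^{\alpha}_{r^{2}}(z)|\leq\frac{r^{2(\alpha-1)}\,|z|^{k}}{2^{k+2}\pi\,\Gamma(\alpha)\Gamma(k/2+1)}\int_{r^{2}}^{\infty}w^{-k/2-1}e^{-|z|^{2}/4w}\wrt w,
\]
and the change of variable $v=|z|^{2}/(4w)$ together with $e^{-v}\leq 1$ gives, for $|z|<r$,
\[
\int_{r^{2}}^{\infty}w^{-k/2-1}e^{-|z|^{2}/4w}\wrt w=\Big(\frac{|z|^{2}}{4}\Big)^{-k/2}\int_{0}^{|z|^{2}/4r^{2}}v^{k/2-1}e^{-v}\wrt v\leq\frac{2}{k}\,r^{-k}.
\]
Therefore $|R^{k}g^{\alpha}_{r^{2}}(z)|\leq\big(2^{k+1}\pi k\,\Gamma(\alpha)\Gamma(k/2+1)\big)^{-1}|z|^{k}r^{2(\alpha-1)-k}$ for $z\in B_{r}$.

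Finally, integrating over $B_{r}$ with $\big\|\,|z|^{k}\,\big\|_{L^{q}(B_{r})}=(2\pi)^{1/q}(kq+2)^{-1/q}r^{k+2/q}$ yields
\[
\norm{R^{k}g^{\alpha}_{r^{2}}}{L^{q}(B_{r})}\leq\frac{(2\pi)^{1/q}}{2^{k+1}\pi k\,(kq+2)^{1/q}\Gamma(k/2+1)}\cdot\frac{r^{2(\alpha-1)+2/q}}{\Gamma(\alpha)}.
\]
Since $q\geq 1$ gives $(2\pi)^{1/q}\leq 2\pi$ and $(kq+2)^{1/q}\geq 1$, while $k\mapsto 2^{k+1}k\,\Gamma(k/2+1)$ is increasing on $[1,\infty)$ with value $2\sqrt{\pi}$ at $k=1$, the prefactor is at most $1/\sqrt{\pi}$, so the claim holds with $C_{3}=1/\sqrt{\pi}$ (say). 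The only genuinely delicate point is the first paragraph: one must justify the representation of $R^{k}g^{\alpha}_{r^{2}}$ even though $L^{-k/2}K_{t}$ fails to decay at infinity for $k\geq 2$, which forces the Bessel-potential regularization $R^{k}_{\epsilon}$ exactly as in Lemma~\ref{l: 1}; everything after that is elementary, the one subtlety being to keep the factor $\Gamma(\alpha)^{-1}$ by not symmetrizing the inner integral in $t$.
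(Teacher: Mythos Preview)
Your proof is correct and follows essentially the same strategy as the paper: represent $R^{k}g^{\alpha}_{r^{2}}$ via the heat-semigroup formula for $R^{k}K_{t}$ (justified through the Bessel regularization $R^{k}_{\epsilon}$), obtain a pointwise bound of the form $C(k)\,|z|^{k}r^{2(\alpha-1)-k}/\Gamma(\alpha)$ on $B_{r}$, and then integrate using \eqref{eq: XcX}. The only difference is tactical: the paper keeps the $(t,s)$-variables, scales $s=t\sigma$ to factor the double integral as a product of a $t$-integral and a Beta integral, and then invokes the duplication formula for~$\Gamma$; you instead pass to $(t,w)$ with $w=s+t$, swap the order, and bound $t^{\alpha-1}\leq r^{2(\alpha-1)}$ directly, which sidesteps the Beta function and duplication formula at the cost of a slightly looser constant in $k$ (harmless here, since the constant is absorbed into $C_{3}$ either way).
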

\begin{proof}
By \eqref{eq: K norm} and \eqref{eq: TRP}, we have
$g^{\alpha}_{r^{2}}\in L^{{}l{}}(\mathbb{C})$
and the (vector-valued improper Riemann)
integral converges in $L^{{}l{}}(\mathbb{C})$, whenever ${}l{}>(1-\alpha)^{-1}$. Since $R^{k}$ is bounded in $L^{{}l{}}(\mathbb{C})$, $1<{}l{}<\infty$, it follows that
\begin{equation}\label{eq: trivial}
R^{k}g^{\alpha}_{r^{2}}=\frac{1}{\Gamma(\alpha)}\int^{\infty}_{r^{2}}t^{\alpha-1}R^{k}K_{t}\wrt t.
\end{equation}
Recall the notation \eqref{eq: HilbLoc} for the local complex Riesz transform.
Since $e^{-sL}K_t=K_s*K_t=K_{s+t}$ for $t,s>0$, we have
$$
(\delta I+ L )^{-k/2}K_{t}
=\frac{1}{\Gamma(k/2)}\int^{\infty}_{0}e^{-s\delta}s^{k/2-1}e^{-s L }K_{t}\wrt s
=\frac{1}{\Gamma(k/2)}\int^{\infty}_{0}e^{-s\delta}s^{k/2-1}K_{s+t}\wrt s.
$$
By \eqref{eq: 1} we have
$$
(2\partial_{z})^{k}K_{s+t}(z)=(-1)^{k}\left(\frac{\bar z}{2}\right)^{k}\frac{\exp\left(-|z|^{2}/4(s+t)\right)}{4\pi (s+t)^{k+1}},
$$
so that, by \eqref{eq: Rtrunc},
$$
R^{k}K_{t}(z)=\lim_{\delta\searrow 0}R^{k}_{\delta}K_{t}(z)=\left(\frac{\bar z}{2}\right)^{k}\frac{i^{k}}{4\pi\Gamma(k/2)}\int^{\infty}_{0}\frac{s^{k/2-1}}{(t+s)^{k+1}}\exp\left(-|z|^{2}/4(s+t)\right)\wrt s,\quad \forall z\in\mathbb{C}.
$$
Consequently, by \eqref{eq: trivial}, we arrive at the identity
$$
R^{k}g^{\alpha}_{r^{2}}(z)=\frac{\left(
i{\bar z}/2
\right)^{k}}{4\pi\Gamma(\alpha)\Gamma(k/2)}\int^{\infty}_{r^{2}}t^{\alpha-1}\int^{\infty}_{0}\frac{s^{k/2-1}}{(t+s)^{k+1}}\exp\left(-|z|^{2}/4(s+t)\right)\wrt s\wrt t.
$$
Hence,
$$
\aligned
\mod{R^{k}(g^{\alpha}_{r^{2}})(z)}&\leq\frac{\left(
\mod{z}/2
\right)^{k}}{4\pi\Gamma(\alpha)\Gamma(k/2)}\int^{\infty}_{r^{2}}t^{(\alpha-1-k/2)-1}\wrt t\int^{\infty}_{0}\frac{s^{k/2-1}}{(1+s)^{k+1}}\wrt s\\
&=\frac{
\mod{z}^{k} }{2^k4\pi\Gamma(\alpha)\Gamma(k/2)}
\cdot\frac{r^{2(\alpha-1)-k}}{k/2+1-\alpha}
\cdot\frac{\Gamma(k/2)\Gamma(k/2+1)}{\Gamma(k+1)}\\
&=\frac{\mod{z}^{k} }{4^{k+1}\sqrt{\pi}\Gamma(\alpha)}
\cdot\frac{r^{2(\alpha-1)-k}}{k/2+1-\alpha}
\cdot\frac{1}{\Gamma((k+1)/2)}\,.
\endaligned
$$
For the last identity we used the duplication formula for the $\Gamma$ function \cite[Appendix A.8]{G14}.
By recalling \eqref{eq: XcX} with $\vartheta=k$, the lemma now follows with
\begin{equation*}
C_3:=\sup_{k\in\mathbb{N}\atop q\in(1,2), \alpha\in(0,1)}
\frac{\pi^{1/q-1/2}}{2^{2k+2-1/q}
(k/2+1-\alpha)
(qk+2)^{1/q}\Gamma((k+1)/2)}<\infty
\,.
\qedhere
\end{equation*}
\end{proof}

\subsection{Proof of Theorem~\ref{t: Lowest}}
As we have already said, this proof also gives
the
lower bound claimed in Theorem~\ref{t: LowerExact}.
However, via duality discussed in Section~\ref{Clear}, here we choose to control the quantity $\|R^{k}\|_q$ for $1<q<2$, so that we are in fact proving
\begin{equation}\label{eq:addedqnorm}
\nor{R^k}_q \geq \frac{\Gamma(1/p)\Gamma(1/q+k/2)}{\Gamma(1/q)\Gamma(1/p+k/2)}.
\end{equation}

Fix $k\in\mathbb{N}$, exponents $1<q<2<p<\infty$ such that $1/p+1/q=1$ and a number $\alpha=\alpha_{\epsilon}=1/p+\epsilon$ with $0<\epsilon<1/q$. Then,
$$
\frac{\norm{R^{k}g^{1}_{\alpha_{\epsilon}}}{L^{q}(\mathbb{C})}}{\norm{g^{1}_{\alpha_{\epsilon}}}{L^{q}(\mathbb{C})}}\geq \frac{\norm{R^{k}g^{1}_{\alpha_{\epsilon}}}{L^{q}(B_{1})}}{\norm{g^{1}_{\alpha_{\epsilon}}}{L^{q}(\mathbb{C})}}.
$$
By \eqref{eq: identity} and Lemma~\ref{l: A2} (applied with $r=1$) we have
$$
\norm{R^{k}g^{1}_{\alpha_{\epsilon}}}{L^{q}(B_{1})}
\geq \norm{R^{k} L ^{-\alpha_{\epsilon}}\delta_{0}}{L^{q}(B_{1})}-\norm{R^{k}g^{\alpha_{\epsilon}}_{1}}{L^{q}(B_{1})} \\
\geq \norm{R^{k} L ^{-\alpha_{\epsilon}}\delta_{0}}{L^{q}(B_{1})}-\frac{
C_3}{\Gamma(1/p)}.
$$
Hence, by \eqref{eq: updom} (applied with $r=1$),
$$
\frac{\norm{R^{k}g^{1}_{\alpha_{\epsilon}}}{L^{q}(\mathbb{C})}}{\norm{g^{1}_{\alpha_{\epsilon}}}{L^{q}(\mathbb{C})}}\geq\frac{\norm{R^{k} L ^{-\alpha_{\epsilon}}\delta_{0}}{L^{q}(B_{1})}}{(1+C_{0}\sqrt{\epsilon})\norm{ L ^{-\alpha_{\epsilon}}\delta_{0}}{L^{q}(B_{1})}}-\frac{
C_3/\Gamma(1/p)
}{(1+C_{0}\sqrt{\epsilon})\norm{ L ^{-\alpha_{\epsilon}}\delta_{0}}{L^{q}(B_{1})}}.
$$
It follows from the right-hand side of \eqref{eq: KRP} and \eqref{eq: XcX} that
$\norm{ L ^{-\alpha_{\epsilon}}\delta_{0}}{L^{q}(B_{1})}\rightarrow\infty$ as $\epsilon\searrow 0$.
Therefore, by Lemma~\ref{l: 2},
$$
\norm{R^{k}}{q} \geq \limsup_{\epsilon\searrow 0}\frac{\norm{R^{k}g^{1}_{\alpha_{\epsilon}}}{L^{q}(\mathbb{C})}}{\norm{g^{1}_{\alpha_{\epsilon}}}{L^{q}(\mathbb{C})}}\geq \lim_{\epsilon\searrow 0}\frac{\norm{R^{k} L ^{-\alpha_{\epsilon}}\delta_{0}}{L^{q}(B_{1})}}{(1+C_{0}\sqrt{\epsilon})\norm{ L ^{-\alpha_{\epsilon}}\delta_{0}}{L^{q}(B_{1})}}=\gamma_{k}(p).
$$
This concludes the proof of the first inequality claimed in Theorem~\ref{t: LowerExact}.
A concrete bound from Theorem~\ref{t: Lowest} then follows from Proposition~\ref{p: sharp asymptotics}.

\begin{remark}
Let us remark that the above proof gives a slightly stronger variant of Theorem~\ref{t: LowerExact}. Recall that a function $f\colon\mathbb{C}\to\mathbb{C}$ is \emph{radial} if $f(z)=f(|z|)$ for every $z\in\mathbb{C}$. Denote by $L^p_{\textup{rad}}(\C)$ the linear space of all radial functions that are also in $L^p(\C)$.
The heat kernel \eqref{eq: Heatkernel} and all functions constructed from it throughout Section~\ref{sec: Andrea proof} are clearly radial.
Thus, we were considering approximate extremizers among the radial functions only.
Also recall that we did not use the assumption that $k$ is odd anywhere in Section~\ref{sec: Andrea proof}.
That way we have shown a variant of estimate \eqref{eq:addedqnorm} that, after interchanging $p$ and $q$, takes form
\begin{equation*}
\sup_{f\in L^p_{\textup{rad}}(\C),\, \nor{f}_p=1} \nor{R^k f}_p \geq \frac{\Gamma(1/q)\Gamma(1/p+k/2)}{\Gamma(1/p)\Gamma(1/q+k/2)}
\end{equation*}
for $k\in\mathbb{N}$ and $1<p<2$.
\end{remark}

\section{Proofs of Theorems~\ref{t: main}, \ref{v-o-d-a} and \ref{thm: LinftyBMObound}}
\label{Appendix}

Here we demonstrate the remaining results from the Introduction: \eqref{31}, Theorem~\ref{v-o-d-a} and Theorem~\ref{thm: LinftyBMObound}. Arguments needed for the first two already appeared in the unpublished manuscript \cite{D20}. We nevertheless include them here, for the sake of completeness since they naturally complement and motivate other results in this paper.

\subsection{Proof of (\ref{31})}
In this section we prove that \eqref{31} holds for all $k\in\mathbb{N}$, in particular, for the {\it odd} ones. The proof is
basically a repetition of \cite[Section 5]{DPV06}. We present it here for the readers' convenience.

Our plan is to single out constants in the weak (1,1) and strong (2,2) inequalities for ${R}^k$ and then interpolate.
A theorem adequate for our purpose
was proven independently by Christ and Rubio de Francia \cite{CRdF88} and Hofmann \cite{H88}.
The formulation in \cite{H88} is explicit about the behaviour of the estimates, as specified in Theorem~\ref{I'm going upstairs}.
Note that it is valid for kernels far more general (``rougher'') than ours, since no smoothness condition is assumed.

\begin{theorem}[\cite{H88}]
\label{I'm going upstairs}
Suppose $\Omega\in L^q(S^1)$ for some $q>1$ and $\int_{S^1}\Omega=0$. For any $\varepsilon>0$ define the operator $T_\varepsilon$ associated with $\Omega$ and $\varepsilon$ by
\begin{equation}
\label{vrijeme}
T_\varepsilon f(z)=\int_{\{|\zeta|>\varepsilon\}} f(z-\zeta){\Omega(\zeta/|\zeta|)\over|\zeta|^2}\,dA(\zeta).
\end{equation}
Then, for any $\alpha,\varepsilon>0$ and a Schwartz function $f$,
$$
m\{|T_\varepsilon f|>\alpha\}\,\leqsim\,\frac{\nor{\Omega}_q}{\alpha}\,\nor{f}_1.
$$
\end{theorem}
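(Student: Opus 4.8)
The statement is exactly the weak-type $(1,1)$ bound for rough singular integrals due to Christ and Rubio de Francia \cite{CRdF88} and Hofmann \cite{H88}, so the plan is to indicate the shape of the proof rather than reproduce it. The linear dependence on $\nor{\Omega}_q$ is essentially free: $T_\varepsilon$ depends linearly on $\Omega$, so replacing $\Omega$ by $\Omega/\nor{\Omega}_q$ reduces everything to the qualitative estimate $m\{|T_\varepsilon f|>\alpha\}\leqsim\nor{f}_1/\alpha$ under the normalisation $\nor{\Omega}_q=1$, with an implied constant allowed to depend on $q$; by homogeneity of the weak-type inequality one may additionally take $\alpha=1$, and the truncation parameter $\varepsilon$ may be dealt with once and for all by passing to the maximal truncation $T^*f=\sup_{\varepsilon>0}|T_\varepsilon f|$, which dominates every $|T_\varepsilon f|$.

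First I would record the $L^2$ bound $\nor{T_\varepsilon}_{2\to 2}\leqsim\nor{\Omega}_q$, uniformly in $\varepsilon$: since $L^q(S^1)\subseteq L\log L(S^1)$ this is the classical Calder\'on--Zygmund $L^2$ theorem, and on the Fourier side it reduces to a uniform pointwise bound for the symbol of $K\chi_{\{|\zeta|>\varepsilon\}}$, where $K(\zeta)=\Omega(\zeta/|\zeta|)|\zeta|^{-2}$. Then I would perform a Calder\'on--Zygmund decomposition $f=g+b$ at height $1$, with $b=\sum_j b_j$, each $b_j$ supported on a dyadic cube $Q_j$ and satisfying $\int b_j=0$ and $\nor{b_j}_1\leqsim|Q_j|$, while $\nor{g}_2^2\leqsim\nor{f}_1$ and $\sum_j|Q_j|\leqsim\nor{f}_1$. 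The good part is immediate from Chebyshev's inequality and the $L^2$ bound: $m\{|T_\varepsilon g|>1/2\}\leqsim\nor{g}_2^2\leqsim\nor{f}_1$. After discarding the union of the dilated cubes $Q_j^*$, whose total measure is $\leqsim\nor{f}_1$, it remains to estimate $\sum_j\int_{(Q_j^*)^c}|T_\varepsilon b_j|\,dA$.

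The step I expect to be the genuine obstacle is exactly this last sum: because $\Omega$ lies only in $L^q(S^1)$ the kernel is not smooth, H\"ormander's condition fails, and one cannot control $\int_{(Q_j^*)^c}|K_\varepsilon(x-y)-K_\varepsilon(x-c_j)|\,dA(x)$ uniformly in $y\in Q_j$. The device that replaces kernel regularity --- and the technical heart of \cite{CRdF88,H88} --- is a dyadic annular decomposition $K=\sum_{s\in\mathbb{Z}}K^s$ with $K^s:=K\chi_{\{2^s\le|\zeta|<2^{s+1}\}}$, for which one has the trivial estimate $\nor{K^s*h}_1\leqsim\nor{\Omega}_1\nor{h}_1$ together with a further splitting of $K^s$ into a low-frequency part (frequencies $\leqsim 2^{-s}$) and a high-frequency part whose $L^2\to L^2$ operator norm decays geometrically in the frequency gap. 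Grouping the bad cubes according to side length $\sim 2^m$ and pairing them against the pieces $K^s$, one uses the crude $L^1$ estimate together with the cancellation $\int b_j=0$ when $s\leqsim m$, and the frequency-localised $L^2$ gain --- applied after a Littlewood--Paley decomposition of each $b_j$ --- when $s\gg m$, summing the resulting geometric series in $s-m$. Tracking the constants through this argument, which is carried out in detail in \cite{H88}, yields the explicit linear dependence on $\nor{\Omega}_q$, and we simply import Theorem~\ref{I'm going upstairs} from there.
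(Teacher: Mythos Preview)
The paper does not prove this theorem at all: it is stated as a citation from \cite{H88} (and \cite{CRdF88}) and then invoked as a black box in the proof of \eqref{31}. Your proposal ultimately does the same thing --- you end by importing the result from \cite{H88} --- but along the way you supply a reasonable outline of the Christ--Rubio de Francia/Hofmann argument (Calder\'on--Zygmund decomposition plus the dyadic annular splitting of the kernel to compensate for the failure of H\"ormander's condition). So your approach is consistent with the paper's, just more expansive; there is nothing to correct.
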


Let us start proving \eqref{31}.
By \eqref{eq: R-multiplier},
each ${R}^k$ is an isometry on $L^2$, so the case $p=2$ is settled.
Now fix $k\in\mathbb{N}$ and consider the case when $1<p<2$.

Let us address the weak (1,1) inequality.
Fix  also $\alpha,\varepsilon>0$ and $f\in L^1$.
Let ${R}_{\varepsilon}^k$ be the ``$\varepsilon$-truncated'' version of ${R}^k$, in the sense of \eqref{vrijeme}.
Denote $M_\varepsilon=\{|{R}_{\varepsilon}^k f|>\alpha\}$ and $M=\{|{R}^k f|>\alpha\}$.

Recall that $\Omega_k$ was introduced in \eqref{eq: Opel Kadett}.
A well-known result on the almost everywhere convergence of homogeneous singular integrals (see, for example, \cite[Corollary 7.11]{MS13}) implies that ${R}^k f=\lim_{\varepsilon\rightarrow 0}{R}_{\varepsilon}^k f$ almost everywhere. This leads to
$\chi_M\leqslant \liminf_{\varepsilon\rightarrow 0}\chi_{M_\varepsilon}$. Consequently, by Fatou's lemma,
$$
m(M)=\int_\mathbb{C}\chi_M\,dm
\leq\int_\mathbb{C}\liminf_{\varepsilon\rightarrow 0}\chi_{M_\varepsilon}\,dm
\leq\liminf_{\varepsilon\rightarrow 0}\int_\mathbb{C}\chi_{M_\varepsilon}\,dm
\leqsim\,\frac{\nor{\Omega_k}_{2}}{\alpha}\,\nor{f}_1
.
$$
In the last inequality we applied Theorem~\ref{I'm going upstairs}, in particular the fact that the estimates there are independent of $\varepsilon$.
So we proved
\begin{equation}
\label{shake it}
m\left\{|{R}^k f|>\alpha\right\}\leqsim \,\frac{k}{\alpha}\,\nor{f}_1.
\end{equation}

Now everything is set for interpolation.
We actually do it as in \cite[Exercise 1.3.2]{G14}.
Choose $p\in (1,2)$ and $r\in (1,p)$.
Let $N_{1,k}$ be the weak (1,1) constant for ${R}^k$.
By the Marcinkiewicz interpolation theorem, applied to $1<r<2$,
\begin{equation}
\label{marc}
\nor{{R}^k}_{r}
\leqslant
2r^{1/r}\bigg({1\over r-1}+{1\over 2-r}\bigg)^{1/r}
N_{1,k}^{\frac{2-r}r}\,.
\end{equation}
The Riesz-Thorin interpolation theorem (i.e., log-convexity of $L^p$ norms) we apply to $r<p<2$ and obtain
\begin{equation}
\label{rt}
\nor{{R}^k}_p\leqslant\nor{{R}^k}_{r}
^{\frac r{2-r}\cdot\frac{2-p}p}.
\end{equation}
By merging \eqref{marc} and \eqref{rt}, applied with
$r=(p+1)/2$, and using that $(p-1)^{1/p}\sim p-1$ for $p\in(1,2)$, we finally arrive at
\begin{equation}
\label{schwach}
\nor{{R}^k}_p\leqsim {N_{1,k}^{
2/p
-1} \over p-1}\,.
\end{equation}
Recall that, according to \eqref{shake it}, we have
\begin{equation}
\label{When was Jesus Born}
N_{1,k}\leqsim k.
\end{equation}
This proves
 \eqref{31}
 for $p\in(1,2)$.
When $p>2$ use
duality (Section \ref{Clear}).

\subsection{Proof of Theorem~\ref{v-o-d-a}.}
Fix $p\in(1,2)$ and combine \eqref{Lowest} with \eqref{schwach}. This gives the estimate $N_{1,k}\geqsim k$.
The proof is finished once we recall \eqref{When was Jesus Born}.

\subsection{Proof of Theorem~\ref{thm: LinftyBMObound}}
For the proof of the upper bound
\[ \nor{R^k}_{L^\infty\rightarrow BMO} \lesssim k \log(k+1) \]
we can rather concentrate on showing the Hardy space bound
\begin{equation}\label{eq: H1L1bound}
\nor{R^k}_{H^1\to L^1} \lesssim k \log(k+1)
\end{equation}
for each $k\in\mathbb{N}$ and then use duality as in Section~\ref{Clear}.
Here it is convenient to regard $BMO$ as the dual of the atomic Hardy space $H^1$.
Straightforward estimation of the kernel gives
\[ | \Omega_k(x-y) - \Omega_k(x) | \lesssim \min\Big\{ \frac{k}{|x|^2}, \frac{k^2 |y|}{|x|^3} \Big\}, \]
$0\neq |x|\geq |y|$, which easily implies the H\"{o}rmander-type condition
\begin{align*}
\int_{|x|\geq 2|y|} | \Omega_k(x-y) - \Omega_k(x) | \,dx
& \lesssim \int_{\{2|y|\leq |x|\leq (k+1)|y|\}} \frac{k}{|x|^2} \,dx + \int_{\{|x|\geq (k+1)|y|\}} \frac{k^2 |y|}{|x|^3} \,dx \\
& \lesssim k \log(k+1)
\end{align*}
for every $y\in\mathbb{C}$.
Now \eqref{eq: H1L1bound} follows directly from \cite[Theorem~2.4.1]{G14b}.

Almost any choice of a function is sufficient to prove the lower bound.
Assume that $k\in\mathbb{N}$ is odd. We use the fact $R^k f_k = g$, where $g(z)=e^{-\pi|z|^2}$ and $f_k$ is given via its Fourier transform as
\[ \widehat{f_k}(\zeta) = \Big(\frac{\zeta}{|\zeta|}\Big)^k e^{-\pi|\zeta|^2}. \]
The very same computation used in the proof of Lemma~\ref{lm:fs}, just without integration in $t$ and simply fixing $t=1$, leads to the formula
\[ f_k(z) = 2 i^k \Big(\frac{z}{|z|}\Big)^k |z| \int_0^{\pi/2} e^{-\pi |z|^2 (\sin\vartheta)^2} \sin\vartheta \sin k\vartheta \,d\vartheta. \]
Integrating by parts we estimate
\begin{align*}
|f_k(z)| & \leq \frac{2|z|}{k} \int_{0}^{\pi/2} e^{-\pi |z|^2 (\sin\vartheta)^2} \big(1 + 2\pi|z|^2(\sin\vartheta)^2\big) \cos\vartheta \,d\vartheta \\
& = \frac{2}{k} \int_0^{|z|} e^{-\pi s^2} (1+2\pi s^2) \,ds \lesssim \frac{1}{k} .
\end{align*}
Thus
\[ \nor{R^k}_{L^\infty\rightarrow BMO} \geq \frac{\nor{R^k f_k}_{BMO(\mathbb{C})}}{\nor{f_k}_{L^\infty(\mathbb{C})}} = \frac{\nor{g}_{BMO(\mathbb{C})}}{\nor{f_k}_{L^\infty(\mathbb{C})}} \gtrsim k. \]
Even powers of $R$ are now estimated simply as
\[ \nor{R^{k+1}}_{L^\infty\rightarrow BMO} \geq \frac{\nor{R^{k+1} f_k}_{BMO(\mathbb{C})}}{\nor{f_k}_{L^\infty(\mathbb{C})}} = \frac{\nor{R g}_{BMO(\mathbb{C})}}{\nor{f_k}_{L^\infty(\mathbb{C})}} \gtrsim k+1. \]

\section*{Acknowledgements}

A. Carbonaro was partially supported by the ``National Group for Mathematical Analysis, Probability and their Applications'' (GNAMPA-INdAM).

O. Dragi\v{c}evi\'c was partially supported by the Slovenian Research Agency, ARRS (research grant J1-1690 and research program P1-0291).

V. Kova\v{c} was supported in part by the Croatian Science Foundation under the project UIP-2017-05-4129 (MUNHANAP).

The authors would like to thank the anonymous referee for their attentive reading of the text and several remarks that improved its presentation.

\bibliography{ComplexHilbert_arXiv3}{}
    \bibliographystyle{alpha}

\end{document}